\newcommand{\er}{\mathbb R}
\newcommand{\R}{\mathbb R}
\newcommand{\V}{{\mathcal V}}
\newtheorem{theorem}{Theorem}[section]
\newtheorem{proposition}[theorem]{Proposition}
\newtheorem{remark}[theorem]{Remark}
\newtheorem{lemma}[theorem]{Lemma}
\newtheorem{corollary}[theorem]{Corollary}
\newtheorem{definition}[theorem]{Definition}
\numberwithin{equation}{section}
\begin{document}
\vglue-1cm \hskip1cm
\date{}
\author{Filipe Oliveira}
\author{Ademir Pastor}

\title{On a Schr\"odinger system arizing in nonlinear optics}



\begin{abstract}
	We study the nonlinear Schr\"odinger system 
	\begin{displaymath}
	\left\{\begin{array}{lllll}
	\displaystyle iu_t+\Delta u-u+(\frac{1}{9}|u|^2+2|w|^2)u+\frac{1}{3}\overline{u}^2w=0,\\
	i\displaystyle \sigma w_t+\Delta w-\mu w+(9|w|^2+2|u|^2)w+\frac{1}{9}u^3=0,
	\end{array}\right.
	\end{displaymath}
	for $(x,t)\in \er^n\times\er$, $1\leq n\leq 3$ and $\sigma,\mu>0$. This system models the interaction between a optical beam and its third harmonic in a material with Kerr-type nonlinear response.  We prove the existence of ground state solutions, analyse its stability, and establish local and global well-posedness results as well as several criteria for blow-up.
	
	\medskip
	
	\noindent
		{\bf AMS Subject Classification: }35Q60; 35Q41; 35Q51; 35C07.
			
			\medskip
		
		\noindent
	{\bf Keywords:} Nonlinear Schr\"odinger Systems; Blow-up; Ground States; Orbital Stability.
\end{abstract}

\maketitle

\section{Introduction}
In recent years, cascading nonlinear processes have attracted an increasing interest. It is now well understood that this phenomena leads to effective higher-order nonlinearities in materials with $\chi^{(2)}$ and $\chi^{(3)}$ susceptibilities, in particular in the framework of second and third-order generation (see for instance \cite{Optics1},\cite{Optics2},\cite{Optics3},\cite{Optics4},\cite{Optics5} and references therein).
In \cite{Optics6}, Sammut et al. introduced a new model for the resonant interaction between a monochromatic beam with frequency $\omega$ propagating in a Kerr-type medium and its third harmonic (with frequency $3\omega$). The third-harmonic generation leads to features typical of non-Kerr $\chi^{(2)}$ media. We begin by briefly detailing its derivation. For a more thorough explanation of the computations and approximations involved we refer the reader to \cite{Optics7} and \cite{Optics6}. Let $(\vec{E},\vec{B})$ the electromagnetic field, $\mu_0$ and $\epsilon_0$, respectively, the vacuum permeability and permittivity, $c$ the speed of light in the vacuum  and $\vec{D}$ the electric displacement vector. From the Maxwell-Faraday's equation
$$\frac{\partial \vec{B}}{\partial t}=-\vec{\nabla}\times\vec{E}$$
and Amp\`ere's Law (for nonmagnetic materials and in the absence of free currents)
$$\vec{\nabla} \times \vec{B}=\mu_0\frac{\partial \vec{D}}{\partial t},$$
we obtain 
$$\vec{\nabla}\times \vec{\nabla}\times \vec{E}+\mu_0\frac{\partial^2 \vec{D}}{\partial t^2}=0.$$
Using the constitutive law $\vec{D}={\rm n}^2\epsilon_0\vec{E}+4\pi\epsilon_0\vec{P}_{NL}$, where $\vec{P}_{NL}$ is the nonlinear part of the polarization vector and ${\rm n}$ the linear refractive index, the identity $\mu_0\epsilon_0c^2=1$ and noticing that $\vec{\nabla}\times\vec\nabla\times \vec{E}=-\Delta\vec{E}+\vec{\nabla}(\vec{\nabla}\cdot \vec{E})$, we get, after neglecting the last term in this identity, the vectorial wave equation
\begin{equation}
\label{phys1}
\Delta \vec{E}-\frac{{\rm n}^2}{c^2}\frac{\partial^2\vec{E}}{\partial t^2}=\frac{4\pi}{c^2}\frac{\partial^2\vec{P}_{NL}}{\partial t^2}.
\end{equation}
Assuming that the beams propagate in a slab waveguide, in the direction of the $(Oz)$ axis, we decompose one of the transverse directions of $\vec{E}$ in two frequency components as
$$
E=\Re e\Big(E_1e^{i(k_1z-\omega t)}+E_3e^{i(k_3z-3\omega t)}\Big),
$$ 
where $\Re e (Z)$ stands for the real part of the complex number $Z$.
Each one of these frequency components satisfy equation \eqref{phys1} for suitable values of the polarization, namely, $P_{NL}(\omega)e^{-i\omega t}$ and $P_{NL}(3\omega)e^{-3i\omega t}$, where the nonlinear polarization  can be written in terms of the $\chi^{(3)}$ susceptibility as
$$P_{NL}=\chi^{(3)}E^3=\chi^{(3)}\sum_{\omega_j}P_{NL}(\omega_j)e^{-\omega_j t}.$$ 
A simple computation yields
$$P_{NL}(\omega)e^{-i\omega t}=\frac 18\chi^{(3)}(3|E_1|^2E_1+6|E_3|^2E_1+3E_3\overline{E}_1^2e^{-i(3k_1-k_3)z})e^{i(k_1z-\omega t)}$$
and $$P_{NL}(3\omega)e^{-3i\omega t}=\frac 18\chi^{(3)}(6|E_1|^2E_3+3|E_3|^2E_3+E_1^3e^{-i(3k_1-k_3)z})e^{i(k_3z-3\omega t)}.$$

By plugging into \eqref{phys1} the quantities $E_1e^{i(k_1z-\omega t)}$ and $E_3e^{i(k_1z-\omega t)}$, and under the slowly-varying amplitude approximation,  we obtain the system
\begin{displaymath}
\left\{\begin{array}{lllllll}
\displaystyle \Delta_{\perp} E_1+2ik_1 \frac{\partial E_1}{\partial z}+\Big(\frac{({\rm n}(\omega))^2\omega^2}{c^2}-k_1^2\Big)E_1+\chi(|E_1|^2E_1+2|E_3|^2E_1+E_3\overline{E}_1^2e^{-i(3k_1-k_3)z})=0\\
\\
\displaystyle \Delta_{\perp} E_3+2ik_3 \frac{\partial E_3}{\partial z}+\Big(\frac{9({\rm n}(3\omega))^2\omega^2}{c^2}-k_3^2\Big)E_3+9\chi(2|E_1|^2E_3+|E_3|^2E_3+\frac 13E_1^3e^{-i(3k_1-k_3)z})=0,
\end{array}\right.
\end{displaymath}
where $\chi=-\dfrac{3\pi \omega^2\chi^{(3)}}{c^2}$.

\noindent
Using the dispersion relations $k_1^2=\frac{({\rm n}(\omega))^2\omega^2}{c^2}$, $k_3^2=\frac{9({\rm n}(3\omega))^2\omega^2}{c^2}$ and introducing the dimensionless variables $t=z_dz$, $(x_1,x_2)=x_0(x,y)$ for a given beam width $x_0$ with associated diffraction length $z_d=2x_0^2k_1$, this system can be reduced to 
\begin{equation}
\begin{cases}
{\displaystyle
iU_t+\Delta U+\left(\frac{1}{9}|U|^2+2|W|^2\right)U+\frac{1}{3}\overline{U}^2W=0},\\
{\displaystyle i\sigma W_t+\Delta W-\alpha\sigma W+\Big(9|w|^2+2|u|^2\Big)W+\frac{1}{9}U^3=0},
\end{cases}
\end{equation}
where $U=3(k_1x_0\chi)^{\frac 12}E_1$, $W=3(k_1x_0\chi)^{\frac 12}E_3e^{-i(3k_1-k_3)z})$, $\sigma=k_3/k_1$ and $\alpha=2k_1(3k_1-k_3)x_0^2$.\\
Finally, considering the nonlinearity-induced propagation constant $\beta$, and introducing $u$ and $w$ trough the relations
$$
U(x,t)=\sqrt{\beta}e^{i\omega t}u(\sqrt{\beta}x,\sqrt{\beta}t), \quad W(x,t)=\sqrt{\beta}e^{i3\omega t}u(\sqrt{\beta}x,\sqrt{\beta}t),
$$
we get the nonlinear Schr\"odinger system 
\begin{equation}
\label{nlssystem}
\left\{\begin{array}{lllll}
\displaystyle iu_t+\Delta u-u+\left(\frac{1}{9}|u|^2+2|w|^2\right)u+\frac{1}{3}\overline{u}^2w=0,\\
i\displaystyle \sigma w_t+\Delta w-\mu w+\Big(9|w|^2+2|u|^2\Big)w+\frac{1}{9}u^3=0,
\end{array}\right.
\end{equation}
where $\mu=(3+\frac{\alpha}{\beta})\sigma.$
Note that at resonance ($k_3=3k_1$), $\sigma=3$ and $\mu=3\sigma$. This equality will play a major role in several results presented in this paper. 

\bigskip

From a mathematical point of view, the system \eqref{nlssystem} has been studied in \cite{Pastor2} and \cite{Optics6} in one space dimension. In \cite{Pastor2}, the authors established local and global well-posedness results for the associated Initial Value Problem with periodic initial data. Furthermore, they showed the existence of smooth curves of periodic standing-wave solutions (dnoidal waves) and proved several  results concerning their linear and nonlinear stability. In \cite{Optics6}, the linear stability of localized stationary solutions was adressed and some numerical simulations presented.

\bigskip

 In the present paper we are concerned with the study of \eqref{nlssystem} in Euclidean space $(x,t)\in \er^n\times \er$, $1\leq n\leq 3$. Our main goal is to study the Cauchy problem associated with \eqref{nlssystem} in the $L^2$-based Sobolev space of order one, $H^1(\R^n)$, the so-called {\it energy space}. This terminology comes from the fact that such a system conserves, at least in a formal level, the energy functional
\begin{equation}\label{energy}
\begin{split}
E(u,w)=\frac{1}{2}\int\left(| \nabla u|^2+|\nabla w|^2+| u|^2+\mu|w|^2\right)-\int\left(\frac{1}{36}|u|^4+\frac{9}{4}| w|^4+|u|^2| w|^2+\frac{1}{9}\Re e (\overline{u}^3w)\right)\\
\end{split}
\end{equation}
and the mass
\begin{equation}\label{mass}
M(u,w)=\int\left(|u|^2+3\sigma |w|^2\right).
\end{equation}

It is well-known that for Schr\"odinger-type equations with cubic nonlinearities, the space dimension $n=2$ is critical in the sense that global existence in the energy space is guaranteed provided that the initial data has $L^2$ norm below the one of the ground state (see for instance \cite{w0}). Hence, since we are interested in addressing this type of issue for \eqref{nlssystem}, the associated stationary problem must also be studied.  Recall that
standing waves are special solutions of  \eqref{nlssystem}  of the form
\begin{equation}\label{standing}
u(x,t)=e^{i\omega t}P(x), \qquad w(x,t)=e^{3i\omega t}Q(x),
\end{equation}
where $P$ and $Q$ are real functions with a suitable decay at infinity.
By replacing \eqref{standing} into \eqref{nlssystem} we see that $(P,Q)$ must satisfy
\begin{equation}\label{standindsys}
 \begin{cases}
{\displaystyle \Delta P-(\omega+1)P+\left(\frac{1}{9}P^2+2Q^2\right)P+\frac{1}{3}{P}^2Q=0,}\\
{\displaystyle \Delta Q-\Big(\mu+3\sigma\omega) Q+(9Q^2+2P^2\Big)Q+\frac{1}{9}P^3=0.}
\end{cases}
\end{equation}

\bigskip

\bigskip

The rest of this paper is organized as follows: in section 2 we will show the existence of solutions for \eqref{standindsys} and study their properties. By a {\it solution} of \eqref{standindsys} we mean a pair of functions $(P,Q)\in H^1(\R^n)\times H^1(\R^n)$ such that
$$
\int (\nabla P\cdot\nabla f+(\omega+1)Pf=\int \left(\frac{1}{9}P^3+2Q^2P+\frac{1}{3}{P}^2Q\right)f
$$
and
$$
\int (\nabla Q\cdot\nabla g+(\mu+3\sigma\omega) Qg=\int \left(9Q^3+2P^2Q+\frac{1}{9}P^3\right)g,
$$
for any pair $(f,g)\in H^1(\R^n)\times H^1(\R^n)$. So, a solution is {\it a priori} understood in the weak sense. However, as it is standard from the elliptic regularity theory, such a weak solution is indeed a strong solution in the usual sense (see, for instance, \cite{gil}). It is easy to check that solutions of \eqref{standindsys}, also called {\it bound states}, are the critical points of the action functional defined by
\begin{equation}
\label{action}
S(P,Q):=E(P,Q)+\frac{\omega}{2} M(P,Q),
\end{equation}
that is, denoting by $\mathcal{B}=\mathcal{B}(\omega,\mu,\sigma)$ the set of all solutions of \eqref{standindsys}, we have 
$$
\mathcal{B}(\omega,\mu,\sigma):=\{(P,Q)\in H^1\times H^1\,:\,S'(P,Q)=0\}.
$$
\noindent
Among all bound states, we will single out the {\it ground states}, i.e., the bound states which minimize the action $S$ among all other bound states. We will prove that such a set of solutions is indeed nonempty (Theorem \ref{existenceGS}). The method we use to prove this result is a variational one, by minimizing $S$ in the so-called {\it Nehari manifold}. In addition, we also study when a ground state has both components nontrivial.

\bigskip

\noindent
In Section 3 we study the Cauchy problem associated to \eqref{nlssystem} for initial data in the energy space $(u_0,w_0)\in H^1(\er^n)\times H^1(\er^n)$. After establishing local-well posedness and a blow-up alternative (Theorem \ref{localtheorem}) we show that the Cauchy problem is globally well-posed in dimension $n=1$ (Corollary \ref{cor32}). In what concerns dimensions $n=2$ and $n=3$, we will give sufficient conditions for global well-posedness in terms of the size of the initial data with respect to the size of ground states at resonance $\mu=3\sigma$ (Theorems \ref{globaln=2} and \ref{globaln=3}).

\bigskip

\noindent
In Section 4 we study the blow-up of solutions to \eqref{nlssystem}. We will begin by showing in Theorem \ref{sharpn=2} that, at resonance, Theorem \ref{globaln=2} is sharp. In dimension $n=3$, we also show that Theorem \ref{globaln=3} is sharp at resonance provided that the initial data $(u_0,w_0)$ lies in $\mathbb{H}=H^1(\R^3)\cap L^2(\R^n,|x|^2dx)$ (Theorem \ref{sharpn=3}). Moreover, we exhibit several conditions implying that the solution blows up either forward or backward in time (Theorems \ref{T47} and \ref{T48}).

\bigskip

\noindent
Finally, in Section 5, we deal with the stability/instability of the ground states $(P,Q)$. We will show that the ground states are orbitally stable in dimension one provided $\omega+1=\mu+3\sigma\omega$ (Theorem \ref{teoesta}). On the other hand, we prove that ground states are unstable if either   $n=3$ and $\mu>0$ or $n=2$ and $\mu\neq3\sigma$.

\bigskip

\noindent
Throughout the paper we will use standard notation in PDEs. Unless otherwise stated, the domain of the different integrals is $\R^n$, hence, for convenience, we will  denote $\int_{\R^n}fdx$ simply by $\int f$. Also, $C$ will represent a generic constant which may vary from inequality to inequality.

\section{Existence of ground states} \label{secexis}

The main goal of this section is to prove the existence of ground states.  More precisely, we will establish the following result:

\begin{theorem}\label{existenceGS}
	Let $1\leq n \leq 3$, $\sigma,\mu>0$ and $\omega>\max\{-1,-\mu/3\sigma\}$. Then the set of ground states, denoted by $\mathcal{G}(\omega,\mu,\sigma)$, is nonempty, that is,
	$$\mathcal{G}(\omega,\mu,\sigma):=\Big\{(P_0,Q_0)\in \mathcal{B}\setminus\{(0,0)\}\,:\, S(P_0,Q_0)\leq S(P,Q), \forall (P,Q)\in \mathcal{B},\Big\}\neq\emptyset.
	$$
	In addition, there exists at least one ground state, say, $(P_0,Q_0)$, which is radially symmetric, $Q_0$ is positive and $P_0$ is either positive or identically zero.
\end{theorem}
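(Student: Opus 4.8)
The plan is to obtain a ground state as a minimizer of the action on the Nehari manifold, and then to identify this minimizer with a least-action bound state. Split $S=\mathcal{Q}-\mathcal{K}$ into its quadratic and quartic parts,
\[
\mathcal{Q}(P,Q)=\tfrac12\int\big(|\nabla P|^2+|\nabla Q|^2+(1+\omega)P^2+(\mu+3\sigma\omega)Q^2\big),
\]
\[
\mathcal{K}(P,Q)=\int\Big(\tfrac1{36}P^4+\tfrac94Q^4+P^2Q^2+\tfrac19P^3Q\Big).
\]
Under the hypothesis $\omega>\max\{-1,-\mu/3\sigma\}$ both $1+\omega$ and $\mu+3\sigma\omega$ are positive, so $\mathcal{Q}$ is equivalent to the square of the $H^1\times H^1$ norm. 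The first task is to verify that the quartic integrand in $\mathcal{K}$ is positive definite, so that $\mathcal{K}(P,Q)>0$ whenever $(P,Q)\neq(0,0)$; the only delicate term is the sign-indefinite coupling $\tfrac19P^3Q$, but a suitable weighted Young inequality bounds $\tfrac19|P|^3|Q|$ by the $P^4$ and $Q^4$ terms and leaves $P^2Q^2$ to spare. Because $S(tP,tQ)=t^2\mathcal{Q}-t^4\mathcal{K}$, every nontrivial ray meets the Nehari manifold $\mathcal{N}=\{(P,Q)\neq0:\langle S'(P,Q),(P,Q)\rangle=0\}=\{\mathcal{Q}=2\mathcal{K}\}$ in the single point $t_*=\sqrt{\mathcal{Q}/2\mathcal{K}}$, which maximizes $S$ along the ray; there $S=\tfrac12\mathcal{Q}=\tfrac14\mathcal{Q}^2/\mathcal{K}$, so $m:=\inf_{\mathcal{N}}S$ is the infimum of the scale-invariant quotient $\tfrac14\mathcal{Q}(P,Q)^2/\mathcal{K}(P,Q)$.

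I would then show $m>0$ and that minimizing sequences are bounded. On $\mathcal{N}$ one has $\mathcal{Q}=2\mathcal{K}$, and the embedding $H^1\hookrightarrow L^4$ (valid for $1\le n\le3$) gives $c\|(P,Q)\|^2\le\mathcal{Q}=2\mathcal{K}\le C\|(P,Q)\|^4$; hence $\|(P,Q)\|$ is bounded below on $\mathcal{N}$ and $S=\tfrac12\mathcal{Q}$ is bounded below by a positive constant. A minimizing sequence is therefore bounded in $H^1\times H^1$.

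The compactness of minimizing sequences is the heart of the argument, since $H^1(\R^n)\hookrightarrow L^4(\R^n)$ is not compact. I would first use symmetric-decreasing rearrangement to restrict to radial, nonnegative functions: replacing $(P,Q)$ by $(|P|^*,|Q|^*)$ does not increase $\mathcal{Q}$ (the Dirichlet integrals decrease and the $L^2$ terms are preserved) and does not decrease $\mathcal{K}$—one first passes to $(|P|,|Q|)$, which can only enlarge $\int P^3Q$, and then applies the Hardy–Littlewood inequality to the coupling terms $\int P^2Q^2$ and $\int|P|^3|Q|$—so the quotient $\mathcal{Q}^2/\mathcal{K}$ does not increase. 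For $n=2,3$ the Strauss compact embedding $H^1_{\mathrm{rad}}(\R^n)\hookrightarrow L^4(\R^n)$ then furnishes a subsequence converging strongly in $L^4$; thus $\mathcal{K}$ passes to the limit, $\mathcal{Q}$ is weakly lower semicontinuous, the lower bound on $\mathcal{K}$ keeps the limit nontrivial, and the limit attains $m$. The case $n=1$ is the main obstacle, because radial symmetry yields no compactness in one dimension; there I would instead invoke Lions' concentration–compactness principle, ruling out vanishing by the uniform lower bound on $\int(|P_k|^4+|Q_k|^4)$ and dichotomy by the strict subadditivity inherited from the scaling of $\mathcal{Q}^2/\mathcal{K}$, so that a translated subsequence converges.

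Finally I would upgrade the minimizer $(P_0,Q_0)$ to a ground state. The Lagrange multiplier rule gives $S'(P_0,Q_0)=\lambda\,G'(P_0,Q_0)$ with $G=\langle S'(\cdot),(\cdot)\rangle=2\mathcal{Q}-4\mathcal{K}$; pairing with $(P_0,Q_0)$ and using $\langle G'(P_0,Q_0),(P_0,Q_0)\rangle=4\mathcal{Q}-16\mathcal{K}=-8\mathcal{K}<0$ on $\mathcal{N}$ forces $\lambda=0$, so $(P_0,Q_0)$ solves \eqref{standindsys} and is a bound state. Since every nontrivial bound state satisfies $\langle S'(P,Q),(P,Q)\rangle=0$ and hence belongs to $\mathcal{N}$, we get $S(P,Q)\ge m=S(P_0,Q_0)$ for all $(P,Q)\in\mathcal{B}$, which is precisely the ground-state property. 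The rearrangement step already provides a radial, nonnegative minimizer; elliptic regularity makes $P_0,Q_0$ bounded and continuous, and the strong maximum principle applied to the equations (with the nonlinearities absorbed into the zeroth-order coefficients) yields $Q_0>0$—note that $Q_0\not\equiv0$, since otherwise the second equation of \eqref{standindsys} forces $P_0^3\equiv0$—and shows that $P_0$ is either strictly positive or identically zero.
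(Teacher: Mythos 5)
Your proposal is correct in outline and follows essentially the same route as the paper --- minimization on the Nehari manifold, symmetric-decreasing rearrangement, compactness, a Lagrange multiplier argument identical to the paper's (there $\langle \tau'(u,w),(u,w)\rangle=-2I(u,w)<0$, which is your $-8\mathcal{K}$), and the maximum principle with the same observation that $Q_0\equiv 0$ would force $P_0\equiv 0$ through the second equation of \eqref{standindsys}. Two points of genuine divergence are worth recording. First, your preliminary verification that the quartic integrand is pointwise positive definite (via the weighted Young bound $\tfrac19|P|^3|Q|\le\tfrac{\epsilon}{18}P^4+\tfrac{1}{18\epsilon}P^2Q^2$, $\epsilon<1/2$) is correct and buys you something the paper does not have: every nontrivial ray meets $\mathcal{N}$ exactly once, so $m$ becomes the infimum of the scale-invariant quotient $\mathcal{Q}^2/4\mathcal{K}$ and no renormalization of the rearranged sequence back onto $\mathcal{N}$ is needed, whereas the paper only scales pairs with $\tau\le 0$ into $\mathcal{N}$ by some $t\in\left]0,1\right]$. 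Second, and more importantly, your treatment of $n=1$ misses the device the paper uses: you correctly note that the Strauss embedding $H^1_{\mathrm{rad}}(\R)\hookrightarrow L^4(\R)$ fails in one dimension, but you overlook that your own rearrangement step produces radially \emph{decreasing} functions, for which $|u(x)|\le C|x|^{-n/2}\|u\|_{L^2}$ holds in every dimension, so that $H^1_{rd}(\R^n)\hookrightarrow L^4(\R^n)$ is compact for all $n\ge 1$ (Berestycki--Lions appendix, or Cazenave, Section 1.7); this closes the case $n=1$ with no extra machinery. Your proposed substitute --- concentration--compactness --- can be made to work, but as stated it has a loose joint: strict subadditivity is \emph{not} ``inherited from the scaling of $\mathcal{Q}^2/\mathcal{K}$,'' since that quotient is invariant under $(P,Q)\mapsto(tP,tQ)$ and its scaling gives nothing; you would have to reformulate as minimizing the quadratic form $I$ at fixed quartic constraint $\widetilde{N}=\lambda$ (as the paper does later, for a different purpose, in Lemma \ref{mlambda}), where the homogeneity $I_\lambda=\lambda^{1/2}I_1$ and strict concavity of $\lambda\mapsto\lambda^{1/2}$ yield strict subadditivity, and you would still need to control the splitting of the coupled terms $\int P^2Q^2$ and $\int P^3Q$ under dichotomy. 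The paper's uniform use of decreasing rearrangement is both simpler and dimension-independent.
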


 Before proceeding, let us establish some Pohojaev-type identities for the solutions of \eqref{standindsys}, which will be useful later.

\begin{lemma}\label{pohojaevlemma}
	Assume that \eqref{standindsys} has a solution $(P,Q)\in H^1(\R^n)\times H^1(\R^n)$. Then the following identities hold:
	\begin{equation}\label{poha1}
	\int \left(-|\nabla P|^2-(\omega+1)P^2+\frac{1}{9}P^4+2P^2Q^2+\frac{1}{3}P^3Q \right)=0,
	\end{equation}
	\begin{equation}\label{poha2}
	\int \left(-|\nabla Q|^2-(\mu+3\sigma\omega)Q^2+9Q^4+2P^2Q^2+\frac{1}{9}P^3Q \right)=0,
	\end{equation}
	and
	\begin{equation}\label{poha3}
	(n-4)\int \left(|\nabla P|^2+|\nabla Q|^2\right) +n(\omega+1)\int P^2+n(\mu+3\sigma\omega)\int Q^2 =0.
	\end{equation}
\end{lemma}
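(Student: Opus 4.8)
For the Nehari-type identities \eqref{poha1} and \eqref{poha2} I would argue directly from the weak formulation of \eqref{standindsys} recorded above, using the natural test functions. Taking $f=P$ in the first weak identity gives $\int(|\nabla P|^2+(\omega+1)P^2)=\int(\tfrac19 P^4+2P^2Q^2+\tfrac13 P^3Q)$, which is exactly \eqref{poha1} after transposing every term to one side; taking $g=Q$ in the second identity yields \eqref{poha2} in the same manner. Since $(P,Q)\in H^1\times H^1$ and, for $1\le n\le 3$, the quartic densities are integrable by the Sobolev embedding, $P$ and $Q$ are admissible test functions, so this step is immediate and requires no further regularity.

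The identity \eqref{poha3} is the Pohozaev (dilation) identity, which I would obtain by multiplying the first equation of \eqref{standindsys} by $x\cdot\nabla P$, the second by $x\cdot\nabla Q$, integrating over $\R^n$ and adding. The linear terms are dispatched by the classical relations $\int(x\cdot\nabla P)\Delta P=\tfrac{n-2}{2}\int|\nabla P|^2$ and $\int(x\cdot\nabla P)P=-\tfrac n2\int P^2$ (and likewise for $Q$). What makes the nonlinear terms collapse is their gradient (Hamiltonian) structure: the diagonal terms satisfy $\tfrac19 P^3\,\partial_k P=\partial_k(\tfrac1{36}P^4)$ and $9Q^3\,\partial_k Q=\partial_k(\tfrac94 Q^4)$, the cross terms assemble as $2Q^2P\,\partial_k P+2P^2Q\,\partial_k Q=\partial_k(P^2Q^2)$, and the genuinely coupled terms assemble as $\tfrac13 P^2Q\,\partial_k P+\tfrac19 P^3\,\partial_k Q=\partial_k(\tfrac19 P^3Q)$. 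Because $\int x_k\partial_k F=-n\int F$, every quartic term contributes $-n$ times its density, so the sum of the two multiplied equations becomes
\begin{equation*}
\frac{n-2}{2}\int\left(|\nabla P|^2+|\nabla Q|^2\right)+\frac n2(\omega+1)\int P^2+\frac n2(\mu+3\sigma\omega)\int Q^2-nN=0,
\end{equation*}
where $N=\int(\tfrac1{36}P^4+\tfrac94 Q^4+P^2Q^2+\tfrac19 P^3Q)$ is precisely the quartic part of the action $S$.

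It remains to eliminate $N$. Adding \eqref{poha1} and \eqref{poha2} and writing $K=\int(|\nabla P|^2+|\nabla Q|^2)$ and $L=(\omega+1)\int P^2+(\mu+3\sigma\omega)\int Q^2$, I would get $K+L=\int(\tfrac19 P^4+9Q^4+4P^2Q^2+\tfrac49 P^3Q)$, and the right-hand side equals $4N$ since every nonlinearity is homogeneous of degree four; hence $N=\tfrac14(K+L)$. Substituting this into the Pohozaev relation $\tfrac{n-2}{2}K+\tfrac n2 L-nN=0$ leaves $\tfrac{n-4}{4}K+\tfrac n4 L=0$, i.e. $(n-4)K+nL=0$, which is exactly \eqref{poha3}.

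The only genuine obstacle is the rigorous justification of the dilation identity, namely that the integrations by parts behind $\int(x\cdot\nabla P)\Delta P$ and $\int x_k\partial_k F$ produce no boundary contributions for a solution that is a priori only in $H^1$. I would handle this in the standard way: by elliptic regularity the weak solution is smooth, and bound states of \eqref{standindsys} decay (exponentially) at infinity, so $x\cdot\nabla P,\,x\cdot\nabla Q\in L^2$ and all boundary terms vanish; alternatively one multiplies by $\eta(x/R)\,x\cdot\nabla P$ for a smooth cutoff $\eta$ and lets $R\to\infty$, the error terms being supported on $|x|\sim R$ and tending to zero. Once this justification is in place, everything above is purely algebraic, the decisive point being the recognition of the total-derivative structure of the coupled and cross nonlinear terms.
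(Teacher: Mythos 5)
Your proposal is correct and takes essentially the same route as the paper: \eqref{poha1} and \eqref{poha2} come from testing the weak formulation with $(P,Q)$, and \eqref{poha3} from the dilation multipliers $x\cdot\nabla P$, $x\cdot\nabla Q$ together with the total-derivative structure of the cross and coupled quartic terms, after which the quartic functional is eliminated using the sum of \eqref{poha1} and \eqref{poha2}, exactly as in the paper's combination of its intermediate identities. The only differences are cosmetic---the paper transfers the $x\cdot\nabla Q$ cross terms onto $x\cdot\nabla P$ by an integration by parts before summing, rather than recognizing the summed integrand as an exact derivative as you do---and your explicit cutoff/decay justification of the vanishing boundary terms is a point the paper leaves implicit.
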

\begin{proof}
	By multiplying the first equation in \eqref{standindsys} by $P$, the second one by $Q$, integrating over $\R^n$ and using integration by parts, we obtain \eqref{poha1} and \eqref{poha2}.\\
		On the other hand, by the same procedure but multiplying this time the two equations by $x\cdot \nabla P$ and $x\cdot \nabla Q$ respectively, we deduce 
	\begin{equation}\label{poha4}
	\int\left(\frac{(n-2)}{2} |\nabla P|^2 +\frac{n(\omega+1)}{2} P^2-\frac{n}{36}P^4+2Q^2Px\cdot\nabla P+\frac{1}{3}P^2Qx\cdot\nabla P \right)=0
	\end{equation}
	and
	\begin{equation}\label{poha5}
	\int\left(\frac{(n-2)}{2} |\nabla Q|^2 +\frac{n(\mu+3\sigma\omega)}{2} Q^2-\frac{9n}{4}Q^4+2P^2Qx\cdot\nabla Q+\frac{1}{9}P^3x\cdot\nabla Q \right)=0.
	\end{equation}
	Now, integration by parts yields
	$$
	\int \left(2P^2Qx\cdot\nabla Q+\frac{1}{9}P^3x\cdot\nabla Q \right)=-\int \left(2Q^2Px\cdot\nabla P+\frac{1}{3}P^2Qx\cdot\nabla P+nP^2Q^2+\frac{n}{9}P^3Q \right).
	$$
	By replacing this last identity into \eqref{poha5} and summing the resulting equation with \eqref{poha4},
	\begin{equation}\label{poha6}
	\begin{split}
	\frac{(n-2)}{2}\int(|\nabla P|^2+& |\nabla Q|^2) + \frac{n(\omega+1)}{2}\int P^2+\frac{n(\mu+3\sigma\omega)}{2}\int Q^2\\
	&-\frac{n}{4}\int\left(\frac{1}{9}P^4+9Q^4+4P^2Q^2+\frac{4}{9}P^3Q \right)=0.
	\end{split}
	\end{equation}
	Also, summing equations \eqref{poha1} and \eqref{poha2}, we obtain
	\begin{equation}\label{poha7}
	\int\left(\frac{1}{9}P^4+9Q^4+4P^2Q^2+\frac{4}{9}P^3Q \right)=\int (|\nabla P|^2+ |\nabla Q|^2)+\int\left( (\omega+1)P^2+(\mu+3\sigma\omega)Q^2\right).
	\end{equation}
	Identity \eqref{poha3} then follows by combining \eqref{poha7} and \eqref{poha6}.
	\end{proof}

\begin{remark}
As an immediate consequence of Lemma \ref{pohojaevlemma} we see that, under the assumption $\omega>\max\{-1,-\mu/3\sigma\}$, ground state solutions in $H^1(\er^n)\cap L^4(\er^n)$ do not exist if $n\geq4$.
\end{remark}

In order to prove Theorem \ref{existenceGS}, we will study a minimization problem in the Nehari manifold.

\begin{lemma}\label{equivalents}
Let $$\mathcal{N}:=\{(u,w)\in H^1(\R^n)\times H^1(\R^n)\,: (u,v)\neq (0,0), S'(u,w)\perp (u,w)\}$$
be the Nehari manifold associated to the action $S$.
Then any solution of the minimization problem 
\begin{equation}
\label{minimization1}
\inf \{S(u,w)\,:\,(u,w)\in \mathcal{N}\},
\end{equation}
is a ground state.
\end{lemma}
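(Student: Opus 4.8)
The plan is to use the standard principle that a constrained minimizer over the Nehari manifold is in fact a free critical point, because the associated Lagrange multiplier is forced to vanish. To set up the computation, write $S=\frac{1}{2}I-J$, where
$$
I(u,w)=\int\big(|\nabla u|^2+|\nabla w|^2\big)+(1+\omega)\int|u|^2+(\mu+3\sigma\omega)\int|w|^2
$$
collects the quadratic terms of the action and
$$
J(u,w)=\int\Big(\tfrac{1}{36}|u|^4+\tfrac{9}{4}|w|^4+|u|^2|w|^2+\tfrac{1}{9}\Re e(\overline{u}^3w)\Big)
$$
collects the quartic ones. Since $I$ and $J$ are homogeneous of degrees $2$ and $4$ under the real dilation $(u,w)\mapsto t(u,w)$, Euler's identity gives, upon setting $G(u,w):=\langle S'(u,w),(u,w)\rangle$, the expression $G(u,w)=I(u,w)-4J(u,w)$. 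Thus the Nehari manifold is exactly $\{(u,w)\neq(0,0):G(u,w)=0\}$, i.e.\ the locus where $I=4J$.

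The first step is the elementary observation that every nontrivial bound state lies in $\mathcal{N}$: if $S'(P,Q)=0$ then a fortiori $\langle S'(P,Q),(P,Q)\rangle=0$, so $\mathcal{B}\setminus\{(0,0)\}\subset\mathcal{N}$. Consequently the infimum in \eqref{minimization1} does not exceed the ground-state level $\inf\{S(P,Q):(P,Q)\in\mathcal{B}\setminus\{(0,0)\}\}$. Hence it suffices to prove that any minimizer $(u_0,w_0)$ of \eqref{minimization1} is itself a bound state: membership in $\mathcal{N}$ makes it automatically nontrivial, and attaining the Nehari infimum it then realizes the minimum of $S$ over $\mathcal{B}\setminus\{(0,0)\}$, which is precisely the defining property of a ground state.

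To show that such a minimizer is a bound state I would invoke the Lagrange multiplier theorem for the problem "minimize $S$ subject to $G=0$". The point requiring care is that $0$ must be a regular value of $G$ near $(u_0,w_0)$, i.e.\ $G'(u_0,w_0)\neq0$, so that $\mathcal{N}$ is a genuine $C^1$ manifold there. Differentiating once more and using homogeneity gives $\langle G'(u,w),(u,w)\rangle=2I(u,w)-16J(u,w)$, which on $\mathcal{N}$ (where $4J=I$) collapses to $-2I(u_0,w_0)$. The hypothesis $\omega>\max\{-1,-\mu/3\sigma\}$ forces $1+\omega>0$ and $\mu+3\sigma\omega>0$, so the quadratic form $I$ is positive definite and $I(u_0,w_0)>0$; in particular $G'(u_0,w_0)\neq0$, as required.

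With the multiplier available one has $S'(u_0,w_0)=\lambda\,G'(u_0,w_0)$ for some $\lambda\in\R$. Pairing with $(u_0,w_0)$ and using $\langle S'(u_0,w_0),(u_0,w_0)\rangle=0$ (because $(u_0,w_0)\in\mathcal{N}$) together with $\langle G'(u_0,w_0),(u_0,w_0)\rangle=-2I(u_0,w_0)\neq0$ yields $\lambda=0$, whence $S'(u_0,w_0)=0$. Thus $(u_0,w_0)$ is a nontrivial bound state attaining the Nehari infimum, and by the reduction above it is a ground state. The only genuinely delicate step is the verification that the multiplier vanishes — equivalently that $0$ is a regular value of $G$ along $\mathcal{N}$ — and this is exactly where the coercivity of $I$ guaranteed by the constraint on $\omega$ is indispensable.
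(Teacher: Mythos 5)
Your proof is correct and follows essentially the same route as the paper: both reduce the lemma to applying the Lagrange multiplier theorem and showing the multiplier vanishes, using the identical key computation that the constraint functional's derivative paired with $(u,w)$ equals $-2I(u,w)\neq0$ on $\mathcal{N}$ (your $G$ is the paper's $\tau$), with the positivity of $I$ coming from $\omega>\max\{-1,-\mu/3\sigma\}$. The only cosmetic difference is that you obtain $G=I-4J$ from Euler's identity for homogeneous functionals where the paper computes $\tau$ and $\langle\tau'(u,w),(u,w)\rangle$ explicitly, and the paper additionally notes that $(0,0)$ is an isolated point of $\{\tau=0\}$, a fact relevant to the existence of minimizers in Theorem \ref{existenceGS} rather than to this lemma itself.
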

\begin{proof}
Since $\mathcal{B}\subset\mathcal{N}$, it is enough to prove that all critical points of \eqref{minimization1} are indeed bound states.\\
We begin by noticing that $(u,w)\in \mathcal N$ if and only if $(u,w)\neq (0,0)$ and
\begin{equation} \label{deftau}
\tau(u,w):=\int |\nabla u|^2+|\nabla w|^2+(1+\omega)u^2+(\mu+3\sigma\omega)w^2-\frac 19u^4-4u^2w^2-9w^4-\frac 49u^3w=0.
\end{equation}
Furthermore,
$$\langle \tau'(u,w),(u,w) \rangle_{L^2}=2\Big(\int |\nabla u|^2+|\nabla w|^2+(1+\omega)u^2+(\mu+3\sigma\omega)w^2-\frac 29u^4-8u^2w^2-18w^4-\frac 89u^3w\Big),$$
and, if $(u,w)\in\mathcal{N}$, 
\begin{equation}
\label{estmanifold}
\langle \tau'(u,w),(u,w) \rangle_{L^2}=-2\Big(\int |\nabla u|^2+|\nabla w|^2 +(1+\omega)u^2+(\mu+3\sigma\omega)w^2\Big)\neq 0,
\end{equation}
which shows that $\mathcal{N}$ is locally smooth.\\
 In addition, it is easy to check that $[h_1,h_2]\textrm{Hess}\,\tau_{(0,0)}\,^t[h_1,h_2]>0$ for all $(h_1,h_2)\neq(0,0)$, which means that $(0,0)$ is a strict minimizer of $\tau$, hence an isolated point of the set $\{\tau(u,w)=0\}$, implying that $\mathcal{N}$ is a complete manifold. Finally, any critical point of $S$ constrained to $\mathcal{N}$ is a (unconstrained) critical point of $S$. Indeed, let us consider $(u_0,w_0)\in\mathcal{N}$ a critical point of $S$ constrained to $\mathcal{N}$. There exists a Lagrange multiplier $\lambda$ such that $S'(u_0,w_0)=\lambda \tau'(u_0,w_0).$
By taking the $L^2$ scalar product with $(u_0,w_0)$, $$\langle S'(u_0,w_0),(u_0,w_0)\rangle_{L^2}=\lambda \langle\tau'(u_0,w_0),(u_0,w_0)\rangle_{L^2},$$ that is, in view of (\ref{estmanifold}), $0=-2\lambda\Big(\int |\nabla u_0|^2+|\nabla w_0|^2+(1+\omega)u_0^2+(\mu+3\sigma\omega)w_0^2\Big)$. Hence $\lambda=0$ and $S'(u_0,w_0)=0$, which establishes the claim.
\end{proof}

As a consequence of Lemma \ref{equivalents}, in order to show Theorem \ref{existenceGS} we will prove the existence of a minimizer to problem \eqref{minimization1}.

\begin{proof}[Proof of Theorem \ref{existenceGS}]

Notice that for $(u,w)\in H^1\times H^1$, $(u,w)\neq (0,0)$, with $\tau(u,w)\leq 0$, there exists $t\in]0,1]$ such that $(tu,tw)\in\mathcal{N}$. Indeed, if $\tau(u,w)=0$, one chooses $t=1$. If $\tau(u,w)<0$ we simply observe that 
\[
\begin{split}
\tau(tu,tw)&=t^2\Big\{\int \Big[|\nabla u|^2+|\nabla w|^2+(1+\omega)u^2+(\mu+3\sigma\omega)w^2\\
&\quad\quad \quad\quad-t^2\Big(\frac 19u^4+4u^2w^2+9w^4+\frac 49u^3w\Big)\Big]\Big\}:=t^2T_{u,w}(t),
\end{split}
\]
with $T_{u,w}(0)>0$ and $T_{u,w}(1)<0$. The Intermediate Value Theorem allows us to conclude.

\medskip

\noindent
We now take a minimizing sequence $(u_j,w_j)\in \mathcal{N}$ for the problem
$$m=\inf\{S(u,w):\,(u,w)\in\mathcal{N}\}.$$
Since $(u_j,w_j)\in\mathcal{N}$,
$$
S(u_j,w_j)=\frac 14\Big(\int |\nabla u_j|^2+|\nabla w_j|^2+(1+\omega)u_j^2+(\mu+3\sigma\omega)w_j^2\Big),
$$
hence it is clear that  $m\geq 0$ and that  $(u_j,w_j)$ is bounded in $H^1\times H^1$.

We put $u_j^*$ and $v_j^*$ the decreasing radial rearrangements of $|u_j|$ and $|v_j|$, respectively. It is well-known that this rearrangement preserves the $L^p$ norm ($1\leq p \leq +\infty$).
Furthermore, the P\'olya-Szeg\"o inequality,
$$\|\nabla f^*\|_{L^2}\leq \|\nabla |f|\|_{L^2},$$ in addition with the inequality $\|\nabla |f|\|_{L^2}\leq \|\nabla f\|_{L^2}$ (see \cite{Lions1}) shows that
$$
S(u_j^*,v_j^*)\leq S(u_j,v_j).
$$
On the other hand, the Hardy-Littlewood inequality,
$$\int |uw|\leq \int u^*w^*,$$
combined with the monotonicity of the map $\lambda\mapsto\lambda^4$ (see for instance \cite{Rear} for details) yields 
$$\int u^2w^2\leq \int (u^*)^2(w^*)^2\,\,\textrm{ and }\,\,\int |u^3w|\leq \int (u^*)^3w^*.$$A combination of these inequalities give
$$
\tau(u_j^*,w_j^*)\leq \tau(|u_j|,|w_j|)\leq \tau (u_j,w_j)=0.
$$
Next, let $t_j\in]0,1]$ be such that $(t_ju_j^*,t_jw_j^*)\in \mathcal{N}.$
We have
$$
S(t_ju_j^*,t_jw_j^*)=t_j^2S(u_j^*,w_j^*)\leq S(u_j^*,w_j^*)
$$ 
and hence, we obtained a minimizing sequence $(t_ju_j^*,t_jv_j^*)$ of radially decreasing functions, denoted again, in what follows, by $(u_j,v_j)$.
Since this sequence is bounded in $H^1\times H^1$, up to a subsequence, $(u_j,v_j)\rightharpoonup (u_*,v_*)$ weakly in $H^1\times H^1$.

To obtain a convergence in a strong topology, it is often necessary to treat the unidimensional $n=1$ separately due to the lack of compactness of the injection $H^1_d(\R)\hookrightarrow L^4(\R)$, where $H_d^1(\R)$ denotes the space of the radially symmetric functions of  $H^1(\R)$. This lack of compactness is, in a sense, a consequence of the inequality
\begin{equation}
\label{ineq}
|u(x)|\leq C|x|^{\frac{1-n}2}\|u\|_{H^1(\R^n)}
\end{equation}
for $u\in H_d^1(\R^2)$, which provides no decay in the case $n=1$. However, if $u$ is also radially decreasing, it is easy to establish that
$$|u(x)|\leq C|x|^{-\frac{n}2}\|u\|_{L^2(\R^n)},$$
which provides decay in all space dimensions, hence compactness by applying the classical Strauss' compactness lemma (\cite{Strauss}). Therefore, putting 
$$
H_{rd}^1(\R^n)=\{u \in H^1_d(\R^n)\,:\,u\textrm{ is radially decreasing}\},
$$ we get the compactness of the injection $H_{rd}^1(\R^n)\hookrightarrow L^4(\R^n)$ for all $n\geq 1$ (see the Appendix of \cite{BL} or Section 1.7 in \cite{Cazenave} for more details). Consequently, up to a subsequence, $(u_j,v_j)\to (u_*,v_*)$ strongly in $L^{4}$ and almost everywhere. In particular this shows that $(u_*,v_*)$ is radially symmetric and nonnegative.

Next, since 
$$
\int\frac{1}{36}u_j^4+\frac{9}{4}w_j^4+u_j^2w_j^2+\frac{1}{9}u_j^3w_j\to \int\frac{1}{36}u^4+\frac{9}{4}w^4+u^2w^2+\frac{1}{9}u^3w,
$$ we deduce that
$$\tau(u_*,w_*)\leq \liminf \tau(u_j,w_j)=0.$$
Once again, let $t\in]0,1]$ such that $(tu_*,tv_*)\in \mathcal{N}$. Thus,
$$
m\leq S(tu_*,tw_*)=t^2S(u_*,w_*)\leq\liminf S(u_j,v_j)=m.
$$
This implies that $(tu_*,tw_*)$ is a minimizer. In particular, all inequalities above are in fact equalities, which means that $t=1$, $(u_*,w_*)\in \mathcal{N}$ and $(u_j,w_j)\to (u_*,w_*)$ strongly in $H^1$.\\
Finally, it is easy to see that $(P_0,Q_0)=(u_*,w_*)$ is a ground state accordingly to the conclusions of the theorem. Indeed, by elliptic regularity $(P_0,Q_0)$ is a $C^2$ solution and satisfies
\begin{equation*}
\begin{cases}
\Delta P_0-(\omega+1)P_0=-(\frac{1}{9}P_0^2+2Q_0^2)P_0-\frac{1}{3}{P}_0^2Q_0 \leq0,\\
\Delta Q_0-(\mu+3\sigma\omega) Q_0=-(9Q_0^2+2P_0^2)Q_0-\frac{1}{9}P_0^3 \leq0.
\end{cases}
\end{equation*}
Therefore, from the maximum principle (see, for example, Theorem 3.5 in \cite{gil}) both $P_0$ and $Q_0$ are either positive or identically zero. Note that $Q_0$ is not identically zero; otherwise so is $P_0$. This completes the proof of Theorem \ref{existenceGS}.  
\end{proof}

Next we will pay particular attention to the question of when both components of a ground state are non-trivial. First of all, recall that a ground state of the scalar equation
\begin{equation}\label{nlsg}
\Delta w-(\mu+3\sigma\omega) w+9w^3=0,
\end{equation}
is a solution (in the weak sense) that minimizes the action $S_0(w):=S(0,w)$ among all solutions os \eqref{nlsg}.  As is well known (see, for instance, \cite{BL} or \cite{Cazenave}), for $\mu+3\sigma \omega>0$, \eqref{nlsg} has a unique (up to translation) ground state which is positive, radially symmetric and decays exponentially at infinity. 

It is easily seen that if $(0,Q)$ is a ground state of \eqref{standindsys} then $Q$ is a ground state of \eqref{nlsg}. Thus, a natural question is if the reciprocal is also true, that is, if  $Q$ is a ground state of \eqref{nlsg}, is it true that  $(0,Q)$ is a ground state of \eqref{standindsys}? As we will see below,  depending on the parameters $\mu$ and $\sigma$, the answer to this question may be negative or positive:

\begin{proposition}
In addition to the assumptions of Theorem \ref{existenceGS},  assume $\mu=3\sigma$ and $\mu\geq 9^{\frac{4}{4-n}}$. Then there exists a pair $(P^*,Q^*)$ in the Nehari manifold $\mathcal{N}$ such that 
$$
S(P^*,Q^*)<S(0,Q),
$$
where $Q$ is the ground state of \eqref{nlsg}. In particular $(0,Q)$ is not a ground state of \eqref{standindsys}.
\end{proposition}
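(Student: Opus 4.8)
The plan is to exhibit an explicit competitor in the Nehari manifold $\mathcal N$ whose action undercuts $S(0,Q)$ exactly when $\mu\ge 9^{4/(4-n)}$, and then to upgrade the (possibly non‑strict) comparison to a strict one by arguing that this competitor fails to solve the full system.

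First I would introduce the ground state $\tilde P$ of the \emph{scalar} equation $-\Delta P+(1+\omega)P=\frac19 P^3$. Since $1+\omega>0$, the classical theory quoted above for \eqref{nlsg} (see \cite{BL,Cazenave}) guarantees that $\tilde P$ exists and is positive and radial. The key point is that $(\tilde P,0)$ lies in $\mathcal N$: indeed $\tau(\tilde P,0)=\int|\nabla\tilde P|^2+(1+\omega)\tilde P^2-\frac19\tilde P^4$ is precisely the scalar Nehari identity for $\tilde P$, hence vanishes. Consequently, using the on‑manifold formula established in the proof of Theorem \ref{existenceGS}, $S(\tilde P,0)=\frac14\int\bigl(|\nabla\tilde P|^2+(1+\omega)\tilde P^2\bigr)=\frac1{36}\int\tilde P^4$.

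Next I would compute $S(\tilde P,0)$ and $S(0,Q)$ explicitly by rescaling the canonical ground state $R_0$ of $-\Delta R+R=R^3$. Writing $\tilde P(x)=3\sqrt{1+\omega}\,R_0(\sqrt{1+\omega}\,x)$ and $Q(x)=\frac{\sqrt\beta}{3}R_0(\sqrt\beta\,x)$ with $\beta=\mu+3\sigma\omega$, one finds $S(\tilde P,0)=\frac94(1+\omega)^{2-n/2}\|R_0\|_{L^4}^4$ and $S(0,Q)=\frac1{36}\beta^{2-n/2}\|R_0\|_{L^4}^4$. This is exactly where the hypothesis $\mu=3\sigma$ enters, through $\beta=\mu(1+\omega)$: the ratio simplifies to $S(\tilde P,0)/S(0,Q)=81\,\mu^{-(4-n)/2}$, so that $S(\tilde P,0)\le S(0,Q)$ if and only if $\mu\ge 81^{2/(4-n)}=9^{4/(4-n)}$, which is precisely the standing assumption.

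Finally, to secure the \emph{strict} inequality demanded by the statement (note that at the threshold $\mu=9^{4/(4-n)}$ the competitor $(\tilde P,0)$ merely ties $S(0,Q)$), I would observe that $(\tilde P,0)$ satisfies the first equation of \eqref{standindsys} but not the second, since the coupling term $\frac19\tilde P^3$ acts as a nonzero source; thus $(\tilde P,0)$ is \emph{not} a bound state. By Lemma \ref{equivalents} every minimizer of $S$ over $\mathcal N$ is a bound state, while by Theorem \ref{existenceGS} the infimum $m:=\inf_{\mathcal N}S$ is attained; since $(\tilde P,0)\in\mathcal N$ is not a bound state it cannot realize $m$, whence $m<S(\tilde P,0)\le S(0,Q)$. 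Taking $(P^*,Q^*)$ to be a minimizer then gives $S(P^*,Q^*)=m<S(0,Q)$, and as ground states have action exactly $m$, the pair $(0,Q)$ is not a ground state. The main obstacle is precisely this strictness at the endpoint $\mu=9^{4/(4-n)}$; the argument above sidesteps a delicate perturbation estimate, the alternative being to switch on a small positive component $(\tilde P,\delta\eta)$ and exploit that the quartic coupling $\frac49\int\tilde P^3\eta$ contributes \emph{linearly} in $\delta$ to the Nehari constraint, forcing the rescaled action strictly below $S(\tilde P,0)$.
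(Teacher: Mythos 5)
Your proof is correct, but it takes a genuinely different route from the paper's. The paper keeps the second component and perturbs it: it considers the two-parameter family $(t\theta W, tQ)$ with $W(x)=Q(\lambda x)$, projects onto $\mathcal{N}$ by the explicit choice of $t$, and compares the leading (quartic in $\theta$) coefficients of the two sides of the resulting inequality, which after optimizing the scaling at $\lambda_0=9^{-2/(4-n)}$ yields the condition $1-\mu\lambda_0^2<0$. Your competitor is instead the pure first-component state $(\tilde P,0)$ built from the scalar ground state of $-\Delta P+(1+\omega)P=\frac19 P^3$; since both $(\tilde P,0)$ and $(0,Q)$ lie on $\mathcal{N}$ (the scalar Nehari identities give $\tau=0$ in \eqref{deftau}), their actions are computed in closed form by rescaling the canonical soliton $R_0$, and the comparison reduces to the clean ratio $S(\tilde P,0)/S(0,Q)=81\,\mu^{-(4-n)/2}$ (your scalings and constants check out, and $\mu=3\sigma$ enters exactly through $\beta=\mu(1+\omega)$). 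Your indirect strictness argument is also sound: a minimizer of $S$ on $\mathcal{N}$ exists (proof of Theorem \ref{existenceGS}) and is a bound state (Lemma \ref{equivalents}), while $(\tilde P,0)$ cannot solve the second equation of \eqref{standindsys} because the source $\frac19\tilde P^3$ is nonzero, so $m<S(\tilde P,0)\le S(0,Q)$. Notably, this buys you a rigorous treatment of the endpoint: the paper's verification requires $f(\lambda_0)<0$, which is the strict inequality $\mu>9^{4/(4-n)}$, so its argument as written does not literally cover $\mu=9^{4/(4-n)}$ (there one would have to compare the next, cubic-in-$\theta$, coefficients, where the coupling term $\frac49\int W^3Q>0$ appears only on the favorable side --- precisely the mechanism you sketch in your closing remark). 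The trade-off is that the paper's family stays close in spirit to the $(0,Q)$ branch and exhibits how switching on the first component lowers the action, whereas your argument at the threshold is non-constructive, identifying only that some minimizer beats $S(0,Q)$ without displaying an explicit competitor with strictly smaller action.
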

\begin{proof}
In what follows, for real functions $u,w\in H^1$, we introduce the functional
\begin{equation}\label{gal2}
N(u,w):=\int\left(\frac{1}{36}u^4+\frac{9}{4} w^4+u^2 w^2+\frac{1}{9}{u}^3w\right).
\end{equation}
and
\begin{equation}
K(u,w)=\|\nabla u\|_{L^2}^2+\|\nabla w\|_{L^2}^2.
\end{equation}

According to Lemma \ref{equivalents} it suffices to prove the existence of $\theta,t\in\R$ and $W\in H^1$ such that $(t\theta W,tQ)\in \mathcal{N}$ and $S(t\theta W,tQ)<S(0,Q)$. But from the proof of Lemma \ref{equivalents} we have $(t\theta W,tQ)\in \mathcal{N}$ if and only if $\tau(t\theta W,tQ)=0$, where $\tau$ is defined in \eqref{deftau}. Since
$$
\tau(t\theta W,tQ)=K(t\theta W,tQ)+(1+\omega)M(t\theta W,tQ)-4N(t\theta W,tQ),
$$
by taking $t\in\R$ satisfying
\begin{equation}\label{deft}
t^2=\frac{K(\theta W,Q)+(1+\omega)M(\theta W,Q)}{4N(\theta W,Q)}
\end{equation}
we see that $\tau(t\theta W,tQ)=0$ (we will choose $\theta>0$ and $W>0$, so that $N(\theta W,Q)>0$). Consequently, from this point on, we take $t$ as in \eqref{deft}.\\

\medskip

\noindent
Now, in view of the identity,
$$
K(t\theta W,tQ)+(1+\omega)M(t\theta W,tQ)=4N(t\theta W,tQ)
$$
and \eqref{deft}, we deduce
\[
\begin{split}
S(t\theta W,tQ)&= \frac{1}{2}\Big(K(t\theta W,tQ)+(1+\omega)M(t\theta W,tQ)\Big)-N(t\theta W,tQ))\\
& =\frac{1}{4}\Big(K(t\theta W,tQ)+(1+\omega)M(t\theta W,tQ)\Big)\\
&=\frac{t^2}{4}\Big(K(\theta W,Q)+(1+\omega)M(\theta W,Q)\Big)\\
&=\frac{\Big(K(\theta W,Q)+(1+\omega)M(\theta W,Q)\Big)^2}{16N(\theta W,Q)}.
\end{split}
\]
Thus $S(t\theta W,tQ)<S(0,Q)$ if and only if
\begin{equation} \label{equipol}
\Big(K(\theta W,Q)+(1+\omega)M(\theta W,Q)\Big)^2<4N(\theta W,Q)\Big(K(0,Q)+(\omega+1)M(0,Q)\Big),
\end{equation}
where we used that $S(0,Q)=(K(0,Q)+(\omega+1)M(0,Q))/4$. Both sides of \eqref{equipol} are polynomials of degree four in $\theta$. The leading coefficient of the polynomial in the left-hand side is $(K(W,0)+(\omega+1)M(W,0))^2$ whereas the leading coefficient of the polynomial in the right-hand side is 
$$
\frac{1}{9}\left(\int W^4\right)\Big(K(0,Q)+(\omega+1)M(0,Q)\Big).
$$
Therefore, \eqref{equipol} holds, for $\theta$ sufficient large, provided that
\begin{equation}\label{equiW}
(K(W,0)+(\omega+1)M(W,0))^2<\frac{1}{9}\left(\int W^4\right)\Big(K(0,Q)+(\omega+1)M(0,Q)\Big).
\end{equation}
So, we are left to show that \eqref{equiW} holds for some $W\in H^1$. For that, assume  $W(x)=Q(\lambda x)$ for some $\lambda\in\R$ to be determined. With this definition, \eqref{equiW} is equivalent to
$$
\lambda^2\int |\nabla Q|^2+(\omega+1)\int Q^2<\frac{\lambda^{n/2}}{3}\Big(K(0,Q)+(\omega+1)M(0,Q)\Big)^{1/2}\left(\int Q^4\right)^{1/2}.
$$
In view of \eqref{poha3},
\begin{equation}\label{a36}
K(0,Q)=\int|\nabla Q|^2=\frac{n\mu(\omega+1)}{4-n}\int Q^2,
\end{equation}
Also, by using \eqref{poha2} and \eqref{a36}, we deduce
\begin{equation} \label{a37}
\int Q^4=\frac{4}{9}\frac{\mu(\omega+1)}{4-n}\int Q^2.
\end{equation}
By replacing \eqref{a36} and \eqref{a37} into \eqref{equiW}, we then obtain that  \eqref{equiW} is equivalent to
\begin{equation}\label{defff}
\frac{n\mu}{4-n}\lambda^2+1-\frac{4\mu}{9(4-n)}\lambda^{n/2}<0.
\end{equation}
Let $f(\lambda)$ denotes the left-hand side of \eqref{defff}. It is easy to see that such a function has a global minimum at the point $\lambda_0=9^{-2/(4-n)}$. In addition, $f(\lambda_0)=1-\mu\lambda_0^2$. Finally, under the assumption $f(\lambda_0)<0$, which means to say $\mu\geq 9^{4/(4-n)}$, we then see that \eqref{defff} holds for $\lambda=\lambda_0$ and   the proof of the proposition is complete.
\end{proof}

Next, we shall show that under the condition $\omega+1=\mu+3\sigma\omega$, the ground states of \eqref{standindsys} are precisely of the form $(0,Q)$, where $Q$ is a ground state of \eqref{nlsg}. We will closely follow the strategy in \cite{correia}.
Define the functionals
\begin{equation}\label{Idef}
I(u,w)=\int (|\nabla u|^2+ |\nabla w|^2)+\int\left( (\omega+1)u^2+(\mu+3\sigma\omega)w^2\right),
\end{equation}
\begin{equation}\label{Ntil}
\widetilde{N}(u,w)=\frac{1}{4}N(u,w)=\int\left(\frac{1}{9}u^4+9w^4+4u^2w^2+\frac{4}{9}u^3w \right)
\end{equation}
and, for $\lambda>0$, consider the minimization problem
\begin{equation}\label{Ilam}
I_\lambda=\inf \{I(f,g): \;(f,g)\in H^1\times H^1 \;\mbox{with}\; \widetilde{N}(f,g)=\lambda \}.
\end{equation}

Our goal will be to prove that for a certain specific $\lambda$ such a infimum is attained by the ground states of \eqref{standindsys}. Initially, note that, from the homogeneity of $I$ and $\widetilde{N}$, if follows that
\begin{equation}\label{I1prop}
I_\lambda=\lambda^{1/2}I_1.
\end{equation}
Also, from Young and Gagliardo-Nirenberg's inequality,
\[
\begin{split}
\widetilde{N}(u,w)\leq C(\|u\|_{L^4}^4+\|w\|_{L^4}^4)\leq CK(u,w)^{n/2}M(u,w)^{2-n/2}\leq CI(u,w)^2,
\end{split}
\]
which implies that $I_\lambda>0$, for any $\lambda>0$. To motivate which $\lambda$ would be the correct one, we recall that if $(u,w)\in \mathcal{G}(\omega,\mu,\sigma)$ then, by \eqref{poha7}, $\widetilde{N}(u,w)=I(u,w)$. Hence, we must choose $\lambda$ such that $I_\lambda=\lambda$. In view of \eqref{I1prop}, we must choose $\lambda=\lambda_1$, where
\begin{equation}\label{lam1}
\lambda_1:=(I_1)^2.
\end{equation}

\begin{lemma}\label{mlambda}
Let assumptions of Theorem \ref{existenceGS} hold and let $m=\inf\{S(u,w):\,(u,w)\in\mathcal{N}\}$. Then
$$
\lambda_1=4m.
$$
\end{lemma}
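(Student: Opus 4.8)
The plan is to reduce both $4m$ and $\lambda_1$ to one and the same scale-invariant infimum, after which the equality is a short computation. The starting observation is purely algebraic: comparing the definition \eqref{deftau} of $\tau$ with \eqref{Idef} and the explicit expression \eqref{Ntil} for $\widetilde{N}$, one checks termwise that $\tau(u,w)=I(u,w)-\widetilde{N}(u,w)$. Hence $(u,w)\in\mathcal{N}$ if and only if $(u,w)\neq(0,0)$ and $I(u,w)=\widetilde{N}(u,w)$. Moreover, as already computed in the proof of Theorem \ref{existenceGS}, on the Nehari manifold the action reduces to $S(u,w)=\frac{1}{4}I(u,w)$, so that $4m=\inf\{I(u,w):(u,w)\in\mathcal{N}\}$.

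Next I would introduce the quotient $\mathcal{Q}(f,g):=I(f,g)^2/\widetilde{N}(f,g)$, defined for all $(f,g)$ with $\widetilde{N}(f,g)>0$. Since $I(tf,tg)=t^2 I(f,g)$ and $\widetilde{N}(tf,tg)=t^4\widetilde{N}(f,g)$, the functional $\mathcal{Q}$ is invariant under the scaling $(f,g)\mapsto(tf,tg)$, $t>0$. The key point is that any $(f,g)$ with $\widetilde{N}(f,g)>0$ can be rescaled onto $\mathcal{N}$: because $\omega+1>0$ and $\mu+3\sigma\omega>0$ we have $I(f,g)>0$, so the choice $t^2=I(f,g)/\widetilde{N}(f,g)$ gives $I(tf,tg)=\widetilde{N}(tf,tg)$, i.e.\ $(tf,tg)\in\mathcal{N}$. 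As $\mathcal{Q}=I$ on $\mathcal{N}$ (there $\widetilde{N}=I$) and $\mathcal{Q}$ is scale-invariant, this yields
\[
4m=\inf_{(u,w)\in\mathcal{N}}I(u,w)=\inf_{(u,w)\in\mathcal{N}}\mathcal{Q}(u,w)=\inf\{\mathcal{Q}(f,g):\widetilde{N}(f,g)>0\}.
\]

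Finally I would evaluate the right-hand infimum by slicing along the level sets of $\widetilde{N}$. Restricting to $\{\widetilde{N}=\lambda\}$ for fixed $\lambda>0$ and using $I>0$ (so that $\inf I^2=(\inf I)^2$) together with $I_\lambda=\lambda^{1/2}I_1$ from \eqref{I1prop}, one obtains
\[
\inf_{\widetilde{N}(f,g)=\lambda}\mathcal{Q}(f,g)=\frac{1}{\lambda}\,(I_\lambda)^2=\frac{1}{\lambda}\,\lambda\,(I_1)^2=(I_1)^2=\lambda_1,
\]
a value independent of $\lambda$ by \eqref{lam1}. Taking the infimum over $\lambda>0$ gives $\inf\{\mathcal{Q}(f,g):\widetilde{N}(f,g)>0\}=\lambda_1$, and combining with the previous display yields $4m=\lambda_1$.

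I do not anticipate a serious obstacle, since no existence of minimizers is required here: only the equality of infima is claimed, and it follows from the exact scaling structure of $I$ and $\widetilde{N}$. The only points needing care are the termwise verification of $\tau=I-\widetilde{N}$ (using the explicit form \eqref{Ntil} rather than the stated proportionality to $N$) and the legitimacy of the rescaling, which rests precisely on the sign conditions $\omega+1>0$ and $\mu+3\sigma\omega>0$ guaranteeing $I>0$ for every nonzero pair.
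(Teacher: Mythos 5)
Your proof is correct, and it takes a genuinely different route from the paper's. The paper proves $\lambda_1\le 4m$ by invoking the \emph{attainment} of $m$ at a ground state $(u,w)$ (from Theorem \ref{existenceGS}): since $I(u,w)=\widetilde{N}(u,w)$ and $m=S(u,w)=\frac14 I(u,w)$, the normalization $(U,W)=(4m)^{-1/4}(u,w)$ gives $I_1\le (4m)^{1/2}$; for the reverse inequality it takes an arbitrary $(z,v)$ with $\widetilde{N}(z,v)=1$, sets $(Z,V)=(4m)^{1/4}(z,v)$, and writes $S(u,w)\le S(Z,V)$ --- a comparison which, read literally, is not immediate because $(Z,V)$ need not lie on $\mathcal{N}$ (and $m$ is only the infimum over $\mathcal{N}$); the honest justification is precisely the Nehari projection $m\le S(tZ,tV)=I(Z,V)^2/\bigl(4\widetilde{N}(Z,V)\bigr)$ with $t^2=I/\widetilde{N}$, which is exactly the rescaling you make explicit. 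Your argument dispenses with minimizers altogether: after identifying $\tau=I-\widetilde{N}$ (and you correctly read \eqref{Ntil} as $\widetilde{N}=4N$ rather than the stated $\frac14 N$, a typo in the paper), you reduce both $4m$ and $\lambda_1$ to the scale-invariant infimum of $\mathcal{Q}=I^2/\widetilde{N}$ over $\{\widetilde{N}>0\}$, and the homogeneity $I_\lambda=\lambda^{1/2}I_1$ closes the computation. What this buys is robustness: no appeal to the compactness machinery behind Theorem \ref{existenceGS}, only the sign conditions $\omega+1>0$ and $\mu+3\sigma\omega>0$, and as a byproduct it supplies the step the paper leaves implicit; the paper's version, in turn, is shorter once ground states are already in hand and directly exhibits normalized (near-)optimizers for $I_1$.
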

\begin{proof}
From the proof of Theorem \ref{existenceGS} we already know the minimization problem \eqref{minimization1} has a solution (a ground state). So, we may fix $(u,w)\in H^1\times H^1$ satisfying $m=S(u,w)$. Since $(u,w)\in \mathcal{G}(\omega,\mu,\sigma)$, we have $\widetilde{N}(u,w)=I(u,w)$ and
\begin{equation} \label{mequi}
m=S(u,w)=E(u,w)+\frac{\omega}{2}M(u,w)=\frac{1}{2}I(u,w)-\frac{1}{4}\widetilde{N}(u,w)=\frac{1}{4}I(u,w).
\end{equation}
Hence, $I(u,w)=4m$. Next, define $(U,W)=(1/4m)^{1/4}(u,w)$. Then, $\widetilde{N}(U,W)=1$ and
$$
I(U,W)=\left(\frac{1}{4m}\right)^{1/2}I(u,w)=(4m)^{1/2}.
$$
This identity implies that $I_1\leq (4m)^{1/2}$, which yields $\lambda_1\leq 4m$.\\
We shall have established the lemma if we prove that $\lambda_1\geq 4m$, that is, $I_1\geq (4m)^{1/2}$. Take any $(z,v)\in H^1\times H^1$ with $\widetilde{N}(z,v)=1$. It then suffices to prove that $(4m)^{1/2}\leq I(z,v)$ or, which is the same, $2S(u,w)^{1/2}\leq I(z,v)$. To prove this, define $(Z,V)=(4m)^{1/4}(z,v)$. It is easy to see that $\widetilde{N}(Z,V)=4m$, $I(Z,V)=(4m)^{1/2}I(z,v)$, and
$$
S(u,w)\leq S(Z,V)=\frac{1}{2}I(Z,V)-\frac{1}{4}\widetilde{N}(Z,V)=\frac{1}{2}(4m)^{1/2}I(z,v)-m=S(u,w)^{1/2}I(z,v)-S(u,w).
$$
The assertion clearly follows from the last inequality.
\end{proof}

\medskip

\noindent
Next, we show the following:
\begin{proposition}\label{varcar}
Under the assumptions of Theorem \ref{existenceGS}, $(u,w)\in \mathcal{G}(\omega,\mu,\sigma)$ if and only if $I(u,w)=I_{\lambda_1}$ and $\widetilde{N}(u,w)=\lambda_1$.\\
In particular,  the set of solutions of the minimization problem \eqref{Ilam} with $\lambda=\lambda_1$ is nonempty.
\end{proposition}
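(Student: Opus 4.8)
The plan is to reduce the whole statement to a few algebraic identities together with the two lemmas already proved. First I would record the general identity $S=\frac12 I-\frac14\widetilde N$ and recall from \eqref{deftau} that $\tau=I-\widetilde N$, so that the Nehari manifold is $\mathcal N=\{(u,w)\neq(0,0):I(u,w)=\widetilde N(u,w)\}$ and, on $\mathcal N$, $S=\frac14 I=\frac14\widetilde N$. The decisive preliminary observation is that $I_{\lambda_1}=\lambda_1$: combining the homogeneity relation \eqref{I1prop} with the choice $\lambda_1=(I_1)^2$ from \eqref{lam1} gives $I_{\lambda_1}=\lambda_1^{1/2}I_1=(I_1)^2=\lambda_1$, and by Lemma \ref{mlambda} this common value equals $4m$. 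I would also note that $m>0$, since under the standing hypotheses $\omega+1>0$ and $\mu+3\sigma\omega>0$, which make $I$ positive definite; hence $\lambda_1=4m>0$.

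For the forward implication, let $(u,w)\in\mathcal G(\omega,\mu,\sigma)$. Being a ground state it lies in $\mathcal N$ (recall $\mathcal G\subset\mathcal B\subset\mathcal N$) and satisfies $S(u,w)=m$, so $\widetilde N(u,w)=I(u,w)$ and, by the identity $S=\frac14 I$ on $\mathcal N$ (cf. \eqref{mequi}), $I(u,w)=4m=\lambda_1$; in particular $\widetilde N(u,w)=\lambda_1$. Since the constraint $\widetilde N(u,w)=\lambda_1$ is then met, the definition of $I_{\lambda_1}$ yields $I_{\lambda_1}\leq I(u,w)=\lambda_1=I_{\lambda_1}$, forcing $I(u,w)=I_{\lambda_1}$. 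This already gives the ``in particular'' assertion: every ground state solves \eqref{Ilam} at level $\lambda_1$, and $\mathcal G$ is nonempty by Theorem \ref{existenceGS}, so the solution set of that minimization problem is nonempty.

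For the converse, suppose $I(u,w)=I_{\lambda_1}$ and $\widetilde N(u,w)=\lambda_1$. Using $I_{\lambda_1}=\lambda_1$ I obtain $I(u,w)=\lambda_1=\widetilde N(u,w)$, that is $\tau(u,w)=0$, while $\lambda_1>0$ rules out $(u,w)=(0,0)$; hence $(u,w)\in\mathcal N$. Consequently $S(u,w)=\frac14 I(u,w)=\frac14\lambda_1=m$, so $(u,w)$ is a minimizer of $S$ over $\mathcal N$, and Lemma \ref{equivalents} identifies it as a ground state.

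The argument is essentially bookkeeping once the correct constraint level is in place; the single point carrying the whole proof is the identity $I_{\lambda_1}=\lambda_1$. This is precisely why $\lambda_1=(I_1)^2$ is the right choice: it is the unique level at which a minimizer of the $\widetilde N$-constrained problem \eqref{Ilam} automatically lands on the Nehari manifold, so that Lemma \ref{equivalents} can be invoked directly, with no auxiliary rescaling of the minimizer. I therefore expect the only delicate step to be verifying $I_{\lambda_1}=\lambda_1=4m$ and confirming $m>0$; everything else follows by substitution.
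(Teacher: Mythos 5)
Your proof is correct, and your forward implication is essentially the paper's own; the converse, however, takes a genuinely different and shorter route. The paper proves the converse in two steps: first, a Lagrange-multiplier computation showing that a minimizer of \eqref{Ilam} at level $\lambda_1$ solves the Euler--Lagrange system with some multiplier $2\eta$, and that comparing $\lambda_1^{1/2}I_1=I_{\lambda_1}=I(u,w)=2\eta\lambda_1$ with \eqref{lam1} pins $2\eta=1$, so that $(u,w)$ is a bound state with $I(u,w)=\widetilde N(u,w)$; second, a scaling comparison with an arbitrary bound state $(z,v)$, $\kappa:=\widetilde N(z,v)$, via $(\widetilde z,\widetilde v)=(\lambda_1/\kappa)^{1/4}(z,v)$, which yields $\kappa\ge (I_1)^2=\lambda_1$ and hence $S(z,v)\ge \lambda_1/4=S(u,w)$. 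You replace both steps by the single algebraic observation $I_{\lambda_1}=\lambda_1^{1/2}I_1=(I_1)^2=\lambda_1$: the hypotheses then force $I(u,w)=\widetilde N(u,w)=\lambda_1$, i.e.\ $\tau(u,w)=0$ with $(u,w)\neq(0,0)$, so $(u,w)\in\mathcal N$ and $S(u,w)=\tfrac14 I(u,w)=\lambda_1/4=m$ by Lemma \ref{mlambda}, whereupon Lemma \ref{equivalents} upgrades this minimizer of \eqref{minimization1} to a ground state in one stroke. What each approach buys: yours is more economical because the multiplier analysis is already encapsulated in the proof of Lemma \ref{equivalents} (for the constraint $\tau$), so nothing needs recomputing; the paper's version, at the price of redoing that work for the constraint $\widetilde N=\lambda_1$, shows directly that minimizers of \eqref{Ilam} solve \eqref{standindsys} without passing through the Nehari manifold, and its comparison step needs only \eqref{poha7} applied to an arbitrary bound state. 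One small repair to your write-up: justifying $m>0$ by ``$I$ is positive definite'' is not by itself sufficient over a noncompact constraint set; the positivity you actually use, $\lambda_1=(I_1)^2>0$, comes from the Gagliardo--Nirenberg estimate $\widetilde N(u,w)\le CI(u,w)^2$ displayed just before \eqref{lam1}, which gives $I_1\ge C^{-1/2}>0$ and is what rules out $(u,w)=(0,0)$ in your converse.
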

\begin{proof}
Let us first take  $(u,w)\in \mathcal{G}(\omega,\mu,\sigma)$. By reasoning as in \eqref{mequi} and using    Lemma \ref{mlambda}, we get
$$
I(u,w)=4m=\lambda_1=I_{\lambda_1} \quad \mbox{and} \quad \widetilde{N}(u,w)=I(u,w)=4m=\lambda_1,
$$
which shows one of the assertions.\\
Let us now assume that $(u,w)$ satisfies  $I(u,w)=I_{\lambda_1}$ and $\widetilde{N}(u,w)=\lambda_1$. By the Lagrange multiplier theorem, there exists $\eta\in\R$ such that, for any $(f,g)\in H^1\times H^1$,
$$
\int (\nabla u\cdot\nabla f+(\omega+1)uf=2\eta\int \left(\frac{1}{9}u^3+2w^2u+\frac{1}{3}{u}^2w\right)f,
$$
$$
\int (\nabla w\cdot\nabla g+(\mu+3\sigma\omega) wg=2\eta\int \left(9w^3+2u^2w+\frac{1}{9}u^3\right)g.
$$
By taking $(f,g)=(u,w)$, and adding the last two identities, we deduce that $I(u,w)=2\eta\widetilde{N}(u,w)$. But, from 
$$
\lambda_1^{1/2}I_1=I_{\lambda_1}=I(u,w)=2\eta \widetilde{N}(u,w)=2\eta\lambda_1,
$$
we obtain $I_1=2\eta\lambda_1^{1/2}$, which compared to \eqref{lam1} gives $2\eta=1$. Consequently, $(u,w)\in \mathcal{B}(\omega,\mu,\sigma)$ and $I(u,w)=\widetilde{N}(u,w)$.

It remains to show that $(u,w)$ is indeed a ground state. To do so, take any $(z,v)$ in $\mathcal{B}(\omega,\mu,\sigma)$ and let $\kappa:=\widetilde{N}(z,v)>0$. Recalling \eqref{poha7}, we then have $I(z,v)=\widetilde{N}(z,v)=\kappa$ and, 
$$
S(z,v)=\frac{1}{2}I(z,v)-\frac{1}{4}\widetilde{N}(z,v)=\frac{1}{4}I(z,v)=\frac{\kappa}{4}.
$$ 
Define $(\widetilde{z},\widetilde{v})=(\lambda_1/\kappa)^{1/4}(z,v)$. Then, 
$$
\widetilde{N}(\widetilde{z},\widetilde{v})=\frac{\lambda_1}{\kappa}\widetilde{N}(z,v)=\lambda_1
$$
and
$$
\lambda_1^{1/2}I_1=I(u,w)\leq I(\widetilde{z},\widetilde{v})=\left(\frac{\lambda_1}{\kappa}\right)^{1/2}I(z,v)=\left(\frac{\lambda_1}{\kappa}\right)^{1/2}\kappa=\lambda_1^{1/2}\kappa^{1/2}.
$$
This last inequality implies that $\kappa\geq (I_1)^2=\lambda_1$. Thus,
\begin{equation}\label{l34}
S(z,v)=\frac{\kappa}{4}\geq\frac{\lambda_1}{4}=S(u,w),
\end{equation}
which proves that $(u,w)\in \mathcal{G}(\omega,\mu,\sigma)$.
\end{proof}

\bigskip

Finally, we prove the previously announced result:
\begin{proposition}\label{carground}
In addition to the assumptions of Theorem \ref{existenceGS}, suppose that $\omega+1=\mu+3\sigma\omega$. If $(u,w)\in \mathcal{G}(\omega, \mu,\sigma)$ then $u\equiv0$ and $w$ is a ground state of \eqref{nlsg}.\\
In particular, up to translation, ground states are unique.
\end{proposition}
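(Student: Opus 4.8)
The plan is to lean on the variational characterization already proved in Proposition \ref{varcar}: a pair is a ground state if and only if it minimizes the functional $I$ of \eqref{Idef} subject to the constraint $\widetilde{N}(\cdot)=\lambda_1$ appearing in \eqref{Ilam}, with $\lambda_1=(I_1)^2>0$ from \eqref{lam1}. The point of the resonance hypothesis $\omega+1=\mu+3\sigma\omega$ is that it collapses the quadratic part of $I$ to $\int(|\nabla u|^2+|\nabla w|^2)+(\omega+1)\int(u^2+w^2)$, which sees $(u,w)$ only through the sums $|\nabla u|^2+|\nabla w|^2$ and $u^2+w^2$. This is precisely what makes a mass-combining substitution admissible, and it is the mechanism behind the strategy of \cite{correia}.

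So I would fix an arbitrary ground state $(u,w)$, which by Proposition \ref{varcar} satisfies $I(u,w)=I_{\lambda_1}$ and $\widetilde{N}(u,w)=\lambda_1$, and introduce the single function $g:=\sqrt{u^2+w^2}\in H^1$. The diamagnetic-type inequality $|\nabla g|^2\le |\nabla u|^2+|\nabla w|^2$ a.e. (from $g\nabla g=u\nabla u+w\nabla w$ and Cauchy--Schwarz, with $\nabla g=0$ a.e. on $\{g=0\}$) together with $\int g^2=\int(u^2+w^2)$ yields, thanks to the resonance identity, that $I(0,g)\le I(u,w)=I_{\lambda_1}$.

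Next I would compare the nonlinear functionals through a pointwise algebraic identity:
\[
9(u^2+w^2)^2-\Big(\tfrac19 u^4+9w^4+4u^2w^2+\tfrac49 u^3w\Big)
=\tfrac{80}{9}u^4+14u^2w^2-\tfrac49 u^3w
=u^2\Big(\tfrac{80}{9}u^2-\tfrac49 uw+14w^2\Big).
\]
The quadratic form in the last parenthesis is positive definite (its discriminant $(\tfrac49)^2-4\cdot\tfrac{80}{9}\cdot14$ is negative), so the right-hand side is nonnegative and vanishes exactly where $u=0$. Since $\widetilde{N}(0,g)=9\int(u^2+w^2)^2$, this gives $\widetilde{N}(0,g)\ge \widetilde{N}(u,w)=\lambda_1$, with equality if and only if $u\equiv0$. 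Setting $\kappa:=\widetilde{N}(0,g)\ge\lambda_1$ and rescaling to $h:=(\lambda_1/\kappa)^{1/4}g$, homogeneity gives $\widetilde{N}(0,h)=\lambda_1$ and $I(0,h)=(\lambda_1/\kappa)^{1/2}I(0,g)\le(\lambda_1/\kappa)^{1/2}I_{\lambda_1}\le I_{\lambda_1}$, the last step because $\kappa\ge\lambda_1$. As $(0,h)$ is feasible for \eqref{Ilam}, the definition of $I_{\lambda_1}$ forces $I(0,h)=I_{\lambda_1}$, hence $(\lambda_1/\kappa)^{1/2}=1$, i.e. $\kappa=\lambda_1$; by the equality case above this means $u\equiv0$.

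Once $u\equiv0$ is established, $(0,w)$ is a ground state of \eqref{standindsys}, so by the elementary observation recorded before the proposition $w$ is a ground state of the scalar equation \eqref{nlsg}; uniqueness up to translation then follows immediately from the known uniqueness (up to translation) of the scalar ground state. I expect the main obstacle to be arranging the two competing inequalities to point the right way at once---that the single substitution $g=\sqrt{u^2+w^2}$ simultaneously \emph{lowers} $I$ and \emph{raises} $\widetilde{N}$---which hinges entirely on controlling the sign-indefinite cross term $\tfrac49 u^3w$ via the positive-definiteness of the displayed quadratic form; pinning down its equality case ($u\equiv0$) is exactly what delivers the conclusion.
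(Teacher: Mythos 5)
Your proposal is correct and follows the paper's own argument essentially verbatim: both rest on the variational characterization of Proposition \ref{varcar}, the substitution $U=\sqrt{u^2+w^2}$ with $|\nabla U|^2\le|\nabla u|^2+|\nabla w|^2$ and the resonance identity $\omega+1=\mu+3\sigma\omega$ to lower $I$, and the rescaling-to-the-constraint argument to force equality and conclude via the scalar ground state of \eqref{nlsg}. The only difference is cosmetic: where the paper asserts that $F(x,y)=\frac19x^4+9y^4+4x^2y^2+\frac49x^3y$ attains its maximum $9$ on $\mathbb{S}^1$ exactly at $(0,\pm1)$, you verify the equivalent pointwise bound $F(u,w)\le 9(u^2+w^2)^2$ through the explicit factorization $9(u^2+w^2)^2-F(u,w)=u^2\bigl(\tfrac{80}{9}u^2-\tfrac49uw+14w^2\bigr)$ with a positive-definite quadratic factor, which moreover pins down the equality case $u\equiv0$ directly, slightly streamlining the paper's concluding almost-everywhere argument with $(z,v)\in\mathbb{S}^1$.
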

\begin{proof}
Take $(u,w)$ in $\mathcal{G}(\omega, \mu,\sigma)$. From Proposition \ref{varcar} we have $I(u,w)=I_{\lambda_1}$ and $\widetilde{N}(u,w)=\lambda_1$. Let us introduce the function $F:\R^2\to\R$ by $F(x,y)=\frac{1}{9}x^4+9y^4+4x^2y^2+\frac{4}{9}x^3y$. It is easily seen that, restricted to the unit circle $\mathbb{S}^1$, $F$ has two maximum points, namely, $(0,1)$ and $(0,-1)$. In addition, its maximum value is $F(0,\pm1)=9$.

Now, define $U(x):=|(u(x),w(x))|=\sqrt{u(x)^2+w(x)^2}>0$. Thus,
\begin{equation} \label{div1}
\begin{split}
\widetilde{N}(u,w)&=\int F(u(x),w(x))=\int F\left(\frac{1}{U(x)}(u(x),w(x)) \right)U(x)^4\leq \int F(0,1)U(x)^4\\
&=\int F(0,U(x))=\widetilde{N}(0,U).
\end{split}
\end{equation}
Also, because $|\nabla U|^2\leq |\nabla u|^2+|\nabla w|^2$,
\[
\begin{split}
I(0,U)&=\int |\nabla U(x)|^2+(\mu+3\sigma\omega)|U(x)|^2\\
&\leq \int |\nabla u(x)|^2+|\nabla w(x)|^2+(\omega+1)u(x)^2+(\mu+3\sigma\omega)w(x)^2=I(u,w),
\end{split}
\]
where we used that $\omega+1=\mu+3\sigma\omega$. In view of \eqref{div1} and the homogeneity of $\widetilde{N}$, there exists $0<t\leq 1$ such that $\widetilde{N}(0,tU)=\widetilde{N}(u,w)=\lambda_1$. Hence,
$$
I(0,tU)=t^2I(0,U)\leq I(0,U)\leq I(u,w).
$$
By recalling that $(u,w)$ is a minimum $I$ restricted to $\widetilde{N}=\lambda_1$, it must be the case that $t=1$. Thus,
$$
\widetilde{N}(0,U)=\lambda_1 \qquad \mbox{and} \qquad I(0,U)=I(u,w)=I_{\lambda_1}.
$$
Another application of Proposition \ref{varcar} yields that $(0,U)\in \mathcal{G}(\omega, \mu,\sigma)$. Consequently, $U$ must be a ground state of \eqref{nlsg}.

By defining $(z,v)=U^{-1}(u,w)$, we see that we can write $(u(x),w(x))=U(x)(z(x),v(x))$, with $(z(x),v(x))\in \mathbb{S}^1$, for any $x\in\R^n$. From
\[
\begin{split}
\int 9U(x)^4&=\int F(0,U(x))=\widetilde{N}(0,U)\\
& =\widetilde{N}(u,w)=\int F\Big(U(x)(z(x),v(x))\Big)=\int F(z(x),w(x))U(x)^4
\end{split}
\]
it follows that
$$
\int U(x)^4(9-F(z(x),w(x)))=0,
$$
Therefore, $F(z(x),v(x))=9$ for a.e. $x\in\R^n$, which implies that either $(z(x),v(x))=(0,1)$ or $(z(x),w(x))=(0,-1)$ for a.e. $x\in R^n$. Consequently, $(u(x),w(x))=(0,U(x))$ or $(u(x),w(x))=(0,-U(x))$, which is the desired conclusion.
\end{proof}

\begin{remark}\label{remb}
	In the case $\omega+1=\mu+3\sigma\omega$, besides the solutions of the form $(0,w)$ (with $w$ a solution of \eqref{nlsg}),  \eqref{standindsys} has another interesting solution. Indeed, assume that $Q=bP$, where $b$ is the (negative) real solution of the equation
	$$
	2b^2+\frac{1}{3}b+\frac{1}{9}=\frac{1}{b}\left( 9b^3+2b+\frac{1}{9} \right).
	$$
Then, equations in \eqref{standindsys} reduce to the same one, namely,
\begin{equation} \label{explP}
\Delta P-(\mu+3\sigma\omega)P+\left(	2b^2+\frac{1}{3}b+\frac{1}{9}\right)P^3=0.
\end{equation}
Hence, if $P_b$ is a solution of \eqref{explP} it follows that $(P_b,bP_b)$ is a solution \eqref{standindsys}. Note that, according to Proposition \ref{carground}, even if $P_b$ is a ground state of \eqref{explP} (which clearly exist), $(P_b,bP_b)$ is not a ground state of \eqref{standindsys}.

In the case $n=1$, the unique ground state of \eqref{nlsg} is explicitly given by
\begin{equation} \label{explsol}
w(x)=\frac{1}{3}\sqrt{2(\mu+3\sigma\omega)}\,{\rm sech}(\sqrt{(\mu+3\sigma\omega)}x).
\end{equation}
So, according to Proposition \ref{carground},  the unique ground sate of \eqref{standindsys} is $(0,w)$, with $w$ given in \eqref{explsol}.
\end{remark}

\section{Global well-posedness}

In this section we are interested in the study of the Cauchy problem associated with \eqref{nlssystem} in the energy space; so, we couple \eqref{nlssystem} with an initial  data $(u_0,w_0)$ in $H^1(\R^n)\times H^1(\R^n)$ and consider the problem
\begin{equation}\label{nlssystem1}
\begin{cases}
{\displaystyle iu_t+\Delta u-u+\left(\frac{1}{9}|u|^2+2|w|^2\right)u+\frac{1}{3}\overline{u}^2w=0},\\
{\displaystyle i\sigma w_t+\Delta w-\mu w+\Big(9|w|^2+2|u|^2\Big)w+\frac{1}{9}u^3=0},\\
u(x,0)=u_0(x), \quad w(x,0)=w_0(x).
\end{cases}
\end{equation}

By using the contraction mapping principle combined with the well-known Strichartz estimates, one can easily show the local well-posedness of \eqref{nlssystem1} (see \cite{Cazenave} or \cite{Linares} for details). More precisely, one may establish the following result:

\begin{theorem}\label{localtheorem}
Assume $1\leq n\leq 3$ and $u_0,w_0\in H^1(\R^n)$. Then, the Cauchy problem \eqref{nlssystem1} admits a unique solution,
$$
(u,w)\in C((-T_*,T^*); H^1(\R^n)\times H^1(\R^n))
$$
defined in the maximal interval of existence $(-T_*,T^*)$, where $T_*,T^*>0$. 

\medskip

\noindent
In addition, the following blow-up alternative holds: if $T^*<\infty$ then
$$
\lim_{t\to T^*}\left(\|\nabla u\|_{L^2}^2+\|\nabla w\|_{L^2}^2 \right)=+\infty.
$$
A similar statement holds with $T_*$ instead of $T^*$.
\end{theorem}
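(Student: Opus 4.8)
The plan is to recast \eqref{nlssystem1} as a fixed-point problem through Duhamel's formula and to close a contraction argument in a Strichartz space, exactly as in the scalar energy-subcritical case in \cite{Cazenave} or \cite{Linares}. First I would absorb the zeroth-order linear terms into the propagators: setting $U(t)=e^{it(\Delta-1)}=e^{-it}e^{it\Delta}$ and $V(t)=e^{i\frac{t}{\sigma}(\Delta-\mu)}=e^{-i\frac{t\mu}{\sigma}}e^{i\frac{t}{\sigma}\Delta}$, a pair $(u,w)$ solves \eqref{nlssystem1} if and only if
\begin{equation*}
u(t)=U(t)u_0+i\int_0^t U(t-s)F_1(u,w)(s)\,ds,\qquad w(t)=V(t)w_0+\frac{i}{\sigma}\int_0^t V(t-s)F_2(u,w)(s)\,ds,
\end{equation*}
where $F_1=\big(\tfrac19|u|^2+2|w|^2\big)u+\tfrac13\overline{u}^2w$ and $F_2=\big(9|w|^2+2|u|^2\big)w+\tfrac19u^3$. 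Since $U$ and $V$ differ from the free group $e^{it\Delta}$ only by a rescaling of time and a unitary phase, both satisfy the full family of Strichartz estimates with constants depending only on $n,\sigma,\mu$; in particular the two distinct propagators introduce no essential difficulty, as the two components are estimated in the same norm.

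Next I would fix an admissible Strichartz pair $(q,r)$ adapted to the cubic nonlinearity and work in the complete metric space
\begin{equation*}
X_T=C([-T,T];H^1\times H^1)\cap L^q([-T,T];W^{1,r}\times W^{1,r}),
\end{equation*}
equipped with a suitable Strichartz norm $\|\cdot\|_{X_T}$. On the closed ball of radius $2C(\|u_0\|_{H^1}+\|w_0\|_{H^1})$ I would show that the map sending $(u,w)$ to the pair of right-hand sides above is a contraction for $T$ small. The two ingredients are the homogeneous Strichartz bound for the linear evolution of the data and the inhomogeneous (dual) Strichartz estimate applied to the Duhamel terms, combined with the nonlinear bound
\begin{equation*}
\big\|\nabla F_j(u,w)\big\|_{L^{q'}([-T,T];L^{r'})}\le C\,T^{\theta}\big(\|u\|_{X_T}+\|w\|_{X_T}\big)^3,\qquad j=1,2,
\end{equation*}
for some $\theta>0$, together with the analogous Lipschitz estimate for the differences $F_j(u_1,w_1)-F_j(u_2,w_2)$.

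The only genuine point — which I regard as the technical heart — is precisely these nonlinear product estimates, and this is exactly where the restriction $1\le n\le 3$ enters. All terms of $F_1,F_2$ are homogeneous of degree three, and a cubic nonlinearity is energy-subcritical when $n\le 3$; concretely, applying the Leibniz rule, Hölder's inequality and the Sobolev embedding $H^1\hookrightarrow L^6$ (with room to spare for $n\le 2$), one can balance the Hölder exponents so as to leave a strictly positive power $T^{\theta}$ of the time interval in front, which is what produces the contraction. Uniqueness on the whole interval of existence and continuous dependence on the data then follow from the same estimates (or a short Gronwall argument), and a standard reiteration glues the local solutions into the maximal interval $(-T_*,T^*)$; the backward time $T_*$ is handled identically by the time-reversal symmetry of the problem.

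Finally, for the blow-up alternative I would use that the local existence time produced above depends only on $\|u_0\|_{H^1}+\|w_0\|_{H^1}$, through a positive, non-increasing function $\delta(\cdot)$ of that quantity. Suppose $T^*<\infty$ and, for contradiction, that $\liminf_{t\to T^*}\big(\|\nabla u(t)\|_{L^2}^2+\|\nabla w(t)\|_{L^2}^2\big)<\infty$. By the conservation of the mass $M(u,w)$ in \eqref{mass}, the norms $\|u(t)\|_{L^2}$ and $\|w(t)\|_{L^2}$ stay bounded, so along some sequence $t_k\to T^*$ the full norm satisfies $\|u(t_k)\|_{H^1}+\|w(t_k)\|_{H^1}\le A$ for a fixed $A$. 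Solving from initial time $t_k$ then gives existence on $[t_k,t_k+\delta(A)]$, and for $k$ large $t_k+\delta(A)>T^*$, extending the solution beyond $T^*$ and contradicting maximality. Hence $\liminf_{t\to T^*}\big(\|\nabla u\|_{L^2}^2+\|\nabla w\|_{L^2}^2\big)=+\infty$, which is the stated alternative.
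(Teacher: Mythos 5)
Your proposal is correct and follows exactly the route the paper takes: the paper proves Theorem \ref{localtheorem} by simply invoking the contraction mapping principle combined with Strichartz estimates as in \cite{Cazenave} and \cite{Linares}, which is precisely the Duhamel fixed-point scheme, cubic energy-subcritical product estimates for $1\le n\le 3$, and standard continuation argument (using conservation of the mass $M$ to control the $L^2$ norms) that you spell out. Your write-up in fact supplies more detail than the paper does, and all the steps you outline are sound.
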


Since the quantity $M$ defined in \eqref{mass} is conserved and is equivalent to the standard norm of norm in $L^2\times L^2$, in order to prove the global well-posedness of \eqref{nlssystem1} in $H^1\times H^1$, one only needs to get an {\it a priori} bound on the $L^2$-norm of the gradients of $u$ and $w$. With this in mind, let us recall the functional
\begin{equation}
K(u,w)=\|\nabla u\|_{L^2}^2+\|\nabla w\|_{L^2}^2.
\end{equation}

To obtain an upper bound for $K$, we may use the conservation of the energy and H\"older's inequality combined with the Gagliardo-Nirenberg inequality
$$
\|f\|_{L^4}^4\leq C\|\nabla f\|_{L^2}^n\|f\|_{L^2}^{4-n}:
$$
\begin{equation}\label{gal}
\begin{split}
K(u,w)&\leq K(u,w)+\|u\|_{L^2}^2+\mu\|w\|_{L^2}^2\\
&= 2 E(u_0,v_0)+2\int\left(\frac{1}{36}|u|^4+\frac{9}{4}| w|^4+|u|^2| w|^2+\frac{1}{9}\Re e( \overline{u}^3w)\right)\\
& \leq 2 E(u_0,v_0)+2\int\left(\frac{1}{36}|u|^4+\frac{9}{4}| w|^4+|u|^2| w|^2+\frac{1}{9}|u|^3|w|\right)\\
&\leq 2 E(u_0,v_0)+2 C\left(\|u\|_{L^4}^4+\|w\|_{L^4}^4\right)\\
&\leq 2 E(u_0,v_0)+2C\left( \|\nabla u\|_{L^2}^2+\|\nabla w\|_{L^2}^2\right)^{n/2} \left( \| u\|_{L^2}^2+3\sigma\| w\|_{L^2}^2\right)^{2-n/2}\\
&=2 E(u_0,v_0)+2CK(u,w)^{n/2} M(u_0,w_0)^{2-n/2},
\end{split}
\end{equation}
where $C$ is a positive universal constant. An immediate consequence of \eqref{gal} is that if $n=1$ then $K(u(t),w(t))$ is bounded. Indeed, for all $\epsilon>0$,
$$K(u,w)\leq 2E(u_0,w_0)+C\Big(\epsilon K(u,w))+\frac 1{\epsilon}M(u_0,w_0)^3\Big),$$
and, choosing $\epsilon= 1/{2C}$,
$$K(u,w)\leq 4E(u_0,w_0)+4C^2M(u_0,w_0)^3.$$
In view of the blow-up alternative stated in Theorem \ref{localtheorem}, this yields the following corollary:
\begin{corollary}
	\label{cor32}
Assume $n=1$ and $u_0,w_0\in H^1(\R)$. Then, the Cauchy problem \eqref{nlssystem1} is globally well-posed.
\end{corollary}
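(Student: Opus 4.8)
The plan is to promote the local solution furnished by Theorem \ref{localtheorem} to a global one by exhibiting an a priori bound on $K(u(t),w(t))=\|\nabla u(t)\|_{L^2}^2+\|\nabla w(t)\|_{L^2}^2$ that is uniform on the whole maximal interval $(-T_*,T^*)$, and then appealing to the blow-up alternative. The engine of the bound is the conservation of the mass $M$ and of the energy $E$: along the flow one has $M(u(t),w(t))=M(u_0,w_0)$ and $E(u(t),w(t))=E(u_0,w_0)$, so that the quantities appearing on the right-hand side of the estimate \eqref{gal} are frozen at their initial values.

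The estimate \eqref{gal} itself --- coming from the definition of $E$, H\"older's inequality, and the Gagliardo-Nirenberg inequality $\|f\|_{L^4}^4\leq C\|\nabla f\|_{L^2}^n\|f\|_{L^2}^{4-n}$ --- reads
\[
K(u,w)\leq 2E(u_0,w_0)+2C\,K(u,w)^{n/2}M(u_0,w_0)^{2-n/2}.
\]
The feature specific to $n=1$ is that the exponent $n/2=\tfrac12$ is strictly below $1$, so the nonlinear contribution grows only sublinearly in $K$. Hence Young's inequality applied to the product $K^{1/2}\cdot\big(2C\,M(u_0,w_0)^{3/2}\big)$, with the splitting parameter tuned so that the resulting multiple of $K$ is exactly $\tfrac12K$, absorbs that term into the left-hand side and leaves the uniform a priori bound
\[
K(u(t),w(t))\leq 4E(u_0,w_0)+4C^2M(u_0,w_0)^3,
\]
exactly as displayed just before the statement; crucially, its right-hand side is a constant depending only on the initial data through the two conserved quantities.

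With this bound in hand the corollary is immediate. If $T^*$ were finite, the blow-up alternative of Theorem \ref{localtheorem} would require $K(u(t),w(t))\to+\infty$ as $t\uparrow T^*$, in flat contradiction with the uniform bound; therefore $T^*=+\infty$, and the symmetric argument gives $T_*=+\infty$. The single delicate ingredient --- the one I would treat with most care --- is the rigorous conservation of $E$ and $M$ for the merely $H^1$-regular solutions produced by the Strichartz/fixed-point construction, since both quantities are, a priori, conserved only formally; the standard remedy is to establish conservation first for smooth approximating data, where the formal integration by parts is licit, and then pass to the limit using the continuity of the flow in $H^1\times H^1$ supplied by the local theory.
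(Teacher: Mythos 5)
Your proof is correct and takes essentially the same route as the paper: the estimate \eqref{gal} with $n=1$, absorption of the sublinear term $K^{1/2}$ by Young's inequality to obtain the a priori bound $K(u,w)\leq 4E(u_0,w_0)+4C^2M(u_0,w_0)^3$, and conclusion via the blow-up alternative of Theorem \ref{localtheorem}. Your closing caveat about justifying conservation of $E$ and $M$ for $H^1$ solutions by approximation is a reasonable extra precaution, though the paper takes it for granted as part of the standard local theory.
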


Now, if $n=2$, \eqref{gal}  does not give an immediate {\it a priori} bound. However, in this case, we can rewrite it as
$$
(1-2CM(u_0,w_0))K(u,w)\leq 2E(u_0,w_0).
$$
Hence, if $M(u_0,w_0)<1/2C$  then the last inequality provides a bound for $K(u(t),w(t))$ and we deduce:

\begin{corollary}
	\label{cor33}
Assume $n=2$ and $u_0,w_0\in H^1(\R^2)$. Then, the Cauchy problem \eqref{nlssystem1} is globally well-posed, provided that the initial mass $M(u_0,w_0)$ is sufficiently small.
\end{corollary}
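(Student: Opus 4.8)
The plan is to exploit the fact that in dimension $n=2$ the cubic nonlinearity is $L^2$-critical, which is precisely what produces the exponent $n/2=1$ in the Gagliardo--Nirenberg estimate \eqref{gal}. First I would specialize \eqref{gal} to $n=2$: with $n/2=1$ and $2-n/2=1$, the chain of inequalities leading to \eqref{gal} reads
$$
K(u,w)\leq 2E(u_0,w_0)+2CK(u,w)M(u_0,w_0),
$$
where I have used the conservation of both the energy $E$ and the mass $M$ along the flow to replace $E(u(t),w(t))$ and $M(u(t),w(t))$ by their initial values $E(u_0,w_0)$ and $M(u_0,w_0)$. The decisive structural feature is that the nonlinear contribution is now exactly \emph{linear} in $K$, so it can be absorbed into the left-hand side rather than dominating it.

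Next I would rearrange this inequality into
$$
\bigl(1-2CM(u_0,w_0)\bigr)K(u,w)\leq 2E(u_0,w_0),
$$
and impose the smallness condition $M(u_0,w_0)<1/(2C)$, which guarantees that the coefficient $1-2CM(u_0,w_0)$ is strictly positive. Dividing through then yields the uniform-in-time a priori bound
$$
K(u(t),w(t))\leq \frac{2E(u_0,w_0)}{1-2CM(u_0,w_0)}
$$
valid for every $t$ in the maximal existence interval. Since the right-hand side is a finite constant determined by the initial data alone, the quantity $K(u(t),w(t))=\|\nabla u(t)\|_{L^2}^2+\|\nabla w(t)\|_{L^2}^2$ remains bounded as $t\to T^*$ and as $t\to -T_*$. (I note in passing that the same smallness of the mass forces $2N(u,w)<K(u,w)$, hence $E(u_0,w_0)>0$, so this bound is genuinely nonnegative and not vacuous.)

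Finally, I would invoke the blow-up alternative of Theorem \ref{localtheorem}: were $T^*$ finite, $K(u(t),w(t))$ would necessarily tend to $+\infty$ as $t\to T^*$, contradicting the bound just obtained; hence $T^*=+\infty$, and the symmetric argument gives $T_*=+\infty$, so the solution is global. There is no serious obstacle once \eqref{gal} is in hand; the only delicate point is that the threshold $1/(2C)$ is governed by the (non-sharp) universal Gagliardo--Nirenberg constant $C$ in \eqref{gal}, so the statement asserts only \emph{some} smallness of the mass. Replacing $C$ by the optimal constant in the vector-valued inequality, and thereby identifying the sharp threshold in terms of the ground states, is the refinement carried out in the analysis at resonance in the later sections.
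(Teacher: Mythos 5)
Your proposal is correct and follows essentially the same route as the paper: specializing \eqref{gal} to $n=2$, absorbing the now-linear nonlinear term to get $(1-2CM(u_0,w_0))K(u,w)\leq 2E(u_0,w_0)$ under the smallness condition $M(u_0,w_0)<1/(2C)$, and concluding via the blow-up alternative of Theorem \ref{localtheorem}. Your closing remark about replacing $C$ by the sharp constant correctly anticipates the refinement the paper carries out in Theorem \ref{globaln=2}.
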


Next we focus on the question of how small $M(u_0,w_0)$ must be for the conclusion of Corollary \ref{cor33} to hold. As we observed above, the constant $C$ appearing in \eqref{gal} plays a crucial role in this question. So, in some sense, the problem is related with the best constant we can place in the inequality
\begin{equation}\label{gal1}
\int\left(\frac{1}{36}|u|^4+\frac{9}{4}| w|^4+|u|^2| w|^2+\frac{1}{9}|u|^3|w|\right)\leq CK(u,w)^{n/2} M(u,w)^{2-n/2}.
\end{equation}
Recall that for $u,w\in H^1$, 
\begin{equation*}
N(u,w):=\int\left(\frac{1}{36}u^4+\frac{9}{4} w^4+u^2 w^2+\frac{1}{9}{u}^3w\right).
\end{equation*}
Also, define
\begin{equation}\label{Jdef}
J(u,w):= \frac{K(u,w)^{n/2} M(u,w)^{2-n/2}}{N(u,w)}.
\end{equation}
It is easily seen that \eqref{gal1} is equivalent to
$$
\frac{1}{C}\leq J(u,w)
$$
for functions $(u,w)$ in the set
$$
\mathbf{N}:=\{(u,w)\in H^1(\R^n)\times H^1(\R^n); \; N(u,w)>0\}.
$$
In particular, the infimum of $J$ on $\mathbf{N}$ is clearly the reciprocal of the best constant in \eqref{gal1}.
In the sequel we will show that this infimum is indeed attained on $\mathbf{N}$. We start with the following preliminary result:

\begin{lemma}\label{proper}
Assume $1\leq n\leq 3$. Let $(P,Q)$ be any solution of \eqref{standindsys} with $\omega=0$ and $\mu=3\sigma$. Then,
\begin{equation}\label{po1}
N(P,Q)=S(P,Q),
\end{equation}
\begin{equation}\label{po2}
K(P,Q)=nS(P,Q),
\end{equation}
\begin{equation}\label{po3}
K(P,Q)=\frac{n}{4-n}M(P,Q).
\end{equation}
In particular,
\begin{equation}\label{po4}
J(P,Q)=n^{n/2}(4-n)^{2-n/2}S(P,Q).
\end{equation}
\end{lemma}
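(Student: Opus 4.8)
The plan is to recognise that the whole lemma is a purely algebraic consequence of the Pohozaev identities of Lemma \ref{pohojaevlemma}, once we exploit the special choice $\omega=0$, $\mu=3\sigma$. The decisive simplification is that these values give $\omega+1=1$ and $\mu+3\sigma\omega=3\sigma$, so that the quadratic form $\int\left((\omega+1)P^2+(\mu+3\sigma\omega)Q^2\right)$ occurring in \eqref{poha3} and \eqref{poha7}, as well as the mass-type term in $E$, collapses to exactly $\int(P^2+3\sigma Q^2)=M(P,Q)$.

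First I would record two consequences of this. In \eqref{poha7}, the left-hand side is precisely $4N(P,Q)$ while the right-hand side is $K(P,Q)+M(P,Q)$; this yields the working relation $4N(P,Q)=K(P,Q)+M(P,Q)$. Likewise \eqref{poha3} becomes $(n-4)K(P,Q)+nM(P,Q)=0$, which rearranges at once to the identity \eqref{po3}, $K(P,Q)=\frac{n}{4-n}M(P,Q)$.

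Next I would derive \eqref{po1} and \eqref{po2}. Since $\omega=0$ gives $S=E$, the definition of $E$ together with the collapse of the mass term reads $S(P,Q)=\tfrac12\big(K(P,Q)+M(P,Q)\big)-N(P,Q)$; substituting $4N=K+M$ gives $S=2N-N=N$, which is \eqref{po1}. Feeding \eqref{po3} into $4N=K+M$ produces $4N=K+\frac{4-n}{n}K=\frac4n K$, hence $N=\frac{K}{n}$, and combining with \eqref{po1} we obtain \eqref{po2}, $K(P,Q)=nS(P,Q)$.

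Finally, \eqref{po4} follows by direct substitution into the definition \eqref{Jdef} of $J$. From the preceding steps we have $K=nS$, $N=S$ and $M=\frac{4-n}{n}K=(4-n)S$, so that
$$J(P,Q)=\frac{K^{n/2}M^{2-n/2}}{N}=\frac{(nS)^{n/2}\big((4-n)S\big)^{2-n/2}}{S}=n^{n/2}(4-n)^{2-n/2}S(P,Q).$$
I do not expect any genuine obstacle here beyond careful bookkeeping; the single point that warrants attention is checking that the resonance condition $\mu=3\sigma$ is precisely what forces the quadratic terms in $E$, in the Pohozaev identities, and in $M$ to coincide, which is what makes the four identities close up among $K$, $M$, $N$ and $S$.
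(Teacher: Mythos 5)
Your proposal is correct and takes essentially the same route as the paper: both obtain the working relation $4N(P,Q)=K(P,Q)+M(P,Q)$ from summing \eqref{poha1} and \eqref{poha2} (i.e., \eqref{poha7}), read \eqref{po3} directly off \eqref{poha3}, deduce \eqref{po1} from $S=E$ at $\omega=0$, combine to get \eqref{po2}, and substitute into \eqref{Jdef} for \eqref{po4}. The only (harmless) cosmetic difference is that you spell out explicitly how $\mu=3\sigma$ collapses the quadratic terms to $M(P,Q)$, which the paper uses implicitly.
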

\begin{proof}
By summing \eqref{poha1} and \eqref{poha2} we promptly deduce that
\begin{equation}\label{po5}
K(P,Q)+M(P,Q)=4N(P,Q).
\end{equation}
Thus, since for $\omega=0$, $S=E$, we obtain
$$
S(P,Q)=E(P,Q)=\frac{1}{2}\Big(K(P,Q)+M(P,Q)\Big)-N(P,Q)=2N(P,Q)-N(P,Q)=N(P,Q),
$$
which proves \eqref{po1}. Also, the identity \eqref{po3} follows directly from \eqref{poha3}.\\
\\
Furthermore, from \eqref{po3}, $M(P,Q)+K(P,Q)=\frac 4n K(P,Q)$, and, from $$N(P,Q)=E(P,Q)=\frac 12(K(P,Q)+M(P,Q))-N(P,Q),$$
one obtains \eqref{po2}.\\
Finally, \eqref{po4} is a consequence of \eqref{po1}-\eqref{po3}. The proof of the lemma is thus completed.
\end{proof}

\begin{lemma}\label{proper1}
	Suppose $1\leq n\leq 3$.
The infimum of $J$ is attained on $\mathbf{N}$ at a pair of real functions $(P,Q)$, that is,
$$
\inf_{\mathbf{N}}J(u,w)=J(P,Q),
$$
if and only if, up to scaling, $(P,Q)$ is a ground state  solution  of \eqref{standindsys} with $\omega=0$ and $\mu=3\sigma$.
\end{lemma}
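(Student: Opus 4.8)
The plan is to exploit the invariance of $J$ under the two-parameter scaling group $(u,w)\mapsto (u_{a,b},w_{a,b})$ with $u_{a,b}(x)=a\,u(bx)$, $w_{a,b}(x)=a\,w(bx)$ and $a,b>0$. A direct check of the homogeneities $K(u_{a,b},w_{a,b})=a^2b^{2-n}K$, $M(u_{a,b},w_{a,b})=a^2b^{-n}M$ and $N(u_{a,b},w_{a,b})=a^4b^{-n}N$ shows that all powers of $a$ and $b$ cancel in \eqref{Jdef}, so $J(u_{a,b},w_{a,b})=J(u,w)$. Thus $J$ depends only on the orbit of $(u,w)$ under this group, which is precisely the ``up to scaling'' appearing in the statement. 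The whole argument then reduces to computing the value of $\inf_{\mathbf N}J$ and identifying when it is attained.

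First I would establish that
$$
\inf_{\mathbf N}J=n^{n/2}(4-n)^{2-n/2}m,\qquad m:=\inf_{\mathcal N}S,
$$
where $S$ is taken with $\omega=0$ and $\mu=3\sigma$, so that (by Lemma \ref{equivalents} and Theorem \ref{existenceGS}) $m$ is the ground-state action. The upper bound is immediate: a ground state $(P_0,Q_0)$ exists, lies in $\mathbf N$ because $N(P_0,Q_0)=S(P_0,Q_0)>0$ by \eqref{po1}, and satisfies $J(P_0,Q_0)=n^{n/2}(4-n)^{2-n/2}m$ by \eqref{po4}. For the lower bound I would take an arbitrary $(u,w)\in\mathbf N$ and use the scaling to land on the Nehari manifold: for each $b>0$ there is a unique $a=a(b)$ with $(u_{a,b},w_{a,b})\in\mathcal N$, namely $a^2=(b^2K+M)/(4N)$, where $K,M,N$ now denote the values at $(u,w)$. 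On $\mathcal N$ one has $S=\tfrac14(K+M)$, so after substituting $a(b)$ one gets $S(u_{a,b},w_{a,b})=b^{-n}(b^2K+M)^2/(16N)$. Minimizing this scalar function of $b>0$ (the optimum being $b^2=nM/((4-n)K)$) and using $S(u_{a,b},w_{a,b})\ge m$ yields exactly $J(u,w)\ge n^{n/2}(4-n)^{2-n/2}m$, the desired bound.

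With the value of the infimum in hand, both implications follow. If $(P,Q)$ attains $\inf_{\mathbf N}J$, then the chain of inequalities above must be equalities; in particular the optimally rescaled pair $(P_{a,b},Q_{a,b})\in\mathcal N$ realizes $S=m$, hence is a minimizer of $S$ over $\mathcal N$ and therefore a ground state by Lemma \ref{equivalents}. Thus $(P,Q)$ is, up to scaling, a ground state of \eqref{standindsys} with $\omega=0$, $\mu=3\sigma$. Conversely, if $(P,Q)$ is such a ground state up to scaling, then by the scale invariance of $J$ together with \eqref{po4} we get $J(P,Q)=n^{n/2}(4-n)^{2-n/2}m=\inf_{\mathbf N}J$, so the infimum is attained at $(P,Q)$. (An alternative would be to prove existence of a minimizer of $J$ by rearrangement and compactness as in Theorem \ref{existenceGS}, then pass to the Euler--Lagrange equation and rescale it to \eqref{standindsys}; but the scaling computation above avoids re-establishing compactness.)

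I expect the main obstacle to be the \emph{sharp} lower bound: one must verify that the explicit one-parameter minimization over the dilation $b$ produces precisely the constant $n^{n/2}(4-n)^{2-n/2}$ of \eqref{po4}, and, just as importantly, that equality in that minimization forces the rescaled pair to be a genuine $\mathcal N$-minimizer rather than merely a point of $\mathcal N$ with small action --- this is exactly where Lemma \ref{equivalents}, identifying $\mathcal N$-minimizers with ground states, is essential. The homogeneity bookkeeping and the check that $\omega=0$, $\mu=3\sigma$ are admissible parameters for Theorem \ref{existenceGS} are routine.
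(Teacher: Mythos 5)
Your proof is correct, but it takes a genuinely different route from the paper's. The paper argues through the Euler--Lagrange equation: a minimizer of $J$ on $\mathbf{N}$ satisfies $J'(P,Q)=0$, which yields the system \eqref{gal3}; the explicit rescaling $\widetilde{P}(x)=\nu P(\lambda x)$, $\widetilde{Q}(x)=\nu Q(\lambda x)$ with $\lambda^2=nM/((4-n)K)$ transforms \eqref{gal3} into \eqref{standindsys} with $\omega=0$, $\mu=3\sigma$, and since this rescaling leaves $J$ invariant, the rescaled pair is a $J$-minimizing bound state; identity \eqref{po4} together with $\mathcal{B}\subset\mathbf{N}$ then converts $J$-minimality into $S$-minimality among bound states, i.e.\ into being a ground state, and the converse is read off from \eqref{po4} again. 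You instead bypass criticality entirely and compute the sharp value $\inf_{\mathbf{N}}J=n^{n/2}(4-n)^{2-n/2}m$ by projecting an arbitrary element of $\mathbf{N}$ onto the Nehari manifold via the amplitude $a$ and optimizing over the dilation $b$ --- a Weinstein-style argument; your homogeneity bookkeeping ($K\mapsto a^2b^{2-n}K$, $M\mapsto a^2b^{-n}M$, $N\mapsto a^4b^{-n}N$, hence $J$ invariant), the formula $S(u_{a,b},w_{a,b})=b^{-n}(b^2K+M)^2/(16N)$, and the optimizer $b^2=nM/((4-n)K)$ (which is exactly the paper's $\lambda$) are all correct. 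What your route buys: it delivers the best constant of Corollary \ref{gnbest} as an immediate byproduct; it makes the attainment/equality analysis transparent, since equality forces the optimally rescaled pair to achieve $m$ on $\mathcal{N}$ and hence to be a ground state by Lemma \ref{equivalents}; and it actually tightens the paper's converse, where the phrase that \eqref{po4} ``again implies'' minimality of $J$ only compares $J$ over bound states, whereas your explicit computation of the infimum over all of $\mathbf{N}$ shows the ground-state value is the global infimum without presupposing attainment. The paper's route, for its part, is shorter and exhibits the minimizer directly as a solution of the stationary system without the one-parameter optimization.
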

\begin{proof}
	Assume $(P,Q)$ is a minimum of $J$ on $\mathbf{N}$.
Since $(P,Q)$ is a critical point   we have $J'(P,Q)=0$, which implies that 
\begin{equation}\label{gal3}
\begin{cases}
{\displaystyle -\frac{n}{K(P,Q)}\Delta P+\frac{4-n}{M(P,Q)}P=\frac{1}{N(P,Q)}\left(\frac{1}{9}P^3+2Q^2P+\frac{1}{3}{P}^2Q\right),}\\\\
{\displaystyle -\frac{n}{K(P,Q)}\Delta Q + \frac{(4-n)3\sigma}{M(P,Q)} Q=\frac{1}{N(P,Q)}\left(9Q^3+2P^2Q+\frac{1}{9}P^3\right)}.
\end{cases}
\end{equation}
Now take $\lambda,\nu>0$ such that
$$
\lambda^2=\frac{nM(P,Q)}{(4-n)K(P,Q)} \quad \mbox{and} \quad \nu^2=\frac{M(P,Q)}{(4-n)N(P,Q)}
$$
and define
$$
\widetilde{P}(x)=\nu P(\lambda x), \qquad \widetilde{Q}(x)=\nu Q(\lambda x).
$$
A straightforward calculation reveals that $(\widetilde{P},\widetilde{Q})$ satisfies
\begin{equation}\label{gal4}
\begin{cases}
{\displaystyle -\Delta \widetilde{P}+\widetilde{P}=\left(\frac{1}{9}\widetilde{P}^3+2\widetilde{Q}^2\widetilde{P}+\frac{1}{3}{\widetilde{P}}^2\widetilde{Q})\right),}\\\\
{\displaystyle -\Delta \widetilde{Q}+ 3\sigma \widetilde{Q}=\left(9\widetilde{Q}^3+2\widetilde{P}^2\widetilde{Q}+\frac{1}{9}\widetilde{P}^3\right)},
\end{cases}
\end{equation}
which is exactly system \eqref{standindsys} with $\omega=0$ and $\mu=3\sigma$. In addition, it is not difficult to see that $J(\widetilde{P},\widetilde{Q})=J(P,Q)$ and $N(\widetilde{P},\widetilde{Q})=\nu^4\lambda^{-n}N(P,Q)>0$, which means that $(\widetilde{P},\widetilde{Q})$ is also a minimizer of $J$ on $\mathbf{N}$. Relation \eqref{po4} then yields that $(\widetilde{P},\widetilde{Q})$ is a minimizer of $S$ on $\mathbf{N}$. In view of \eqref{poha7}, it is easy to conclude that any bound state belongs to $\mathbf{N}$ and we deduce that $(\widetilde{P},\widetilde{Q})$ is a ground state.

\medskip

\noindent
Conversely, if $(P,Q)$ is a ground state  of \eqref{standindsys} with $\omega=0$ and $\mu=3\sigma$, we have $(P,Q)\in \mathbf{N}$ and $(P,Q)$ is a minimum of $S$. The identity \eqref{po4} again implies that $(P,Q)$ is also a minimum of $J$.
\end{proof}

\noindent
The above results allow us to obtain the best constant in the Gagliardo-Nirenberg inequality \eqref{gal1}. More precisely, we have:

\begin{corollary}\label{gnbest}
Assume $1\leq n\leq 3$. Then the inequality
$$
N(u,w)\leq C_{GN}K(u,w)^{n/2} M(u,w)^{2-n/2}
$$
holds, for any $(u,v)\in \mathbf{N}$, with
\[
\begin{split}
C_{GN}&=\frac{(4-n)^{n/2-1}}{n^{n/2}} \frac{1}{M(P,Q)}\\
&=\frac{(4-n)^{n/2-2}}{n^{n/2}} \frac{1}{S(P,Q)},
\end{split}
\]
where $(P,Q)$ is any ground state  of \eqref{standindsys} with $\omega=0$ and $\mu=3\sigma$.
\end{corollary}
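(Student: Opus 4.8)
The plan is to translate the search for the sharp constant into the minimization problem already solved above. Observe first that, by the definition \eqref{Jdef} of $J$ and since $N(u,w)>0$ on $\mathbf{N}$, the inequality $N(u,w)\leq C\,K(u,w)^{n/2}M(u,w)^{2-n/2}$ holds for every $(u,w)\in\mathbf{N}$ precisely when $1/C\leq J(u,w)$ throughout $\mathbf{N}$. Consequently the best (smallest) admissible constant is $C_{GN}=1/\inf_{\mathbf{N}}J$, and everything reduces to evaluating this infimum.

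First I would invoke Lemma \ref{proper1}, which asserts that the infimum of $J$ over $\mathbf{N}$ is attained and that, up to scaling, the minimizers are exactly the ground states $(P,Q)$ of \eqref{standindsys} with $\omega=0$ and $\mu=3\sigma$. Hence $\inf_{\mathbf{N}}J=J(P,Q)$ for any such ground state. Next I would apply identity \eqref{po4} of Lemma \ref{proper}, namely $J(P,Q)=n^{n/2}(4-n)^{2-n/2}S(P,Q)$; taking reciprocals gives immediately
$$
C_{GN}=\frac{1}{n^{n/2}(4-n)^{2-n/2}S(P,Q)}=\frac{(4-n)^{n/2-2}}{n^{n/2}}\frac{1}{S(P,Q)},
$$
which is the second of the two stated expressions.

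To obtain the first expression I would combine \eqref{po2} and \eqref{po3}: from $K(P,Q)=nS(P,Q)$ and $K(P,Q)=\tfrac{n}{4-n}M(P,Q)$ one gets $S(P,Q)=M(P,Q)/(4-n)$, so that $1/S(P,Q)=(4-n)/M(P,Q)$. Substituting this into the formula above yields
$$
C_{GN}=\frac{(4-n)^{n/2-2}}{n^{n/2}}\cdot\frac{4-n}{M(P,Q)}=\frac{(4-n)^{n/2-1}}{n^{n/2}}\frac{1}{M(P,Q)},
$$
as claimed.

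There is no serious obstacle here: the analytic content (existence of a minimizer and its identification with a ground state) has been dispatched in Lemmas \ref{proper} and \ref{proper1}, and what remains is only bookkeeping of exponents. The one point deserving a remark is that the constant is well defined independently of the choice of ground state: since all ground states share the same action value $S(P,Q)$ by definition, and \eqref{po2}--\eqref{po3} then pin down $M(P,Q)$, both $S(P,Q)$ and $M(P,Q)$ take a common value across the whole set $\mathcal{G}(0,3\sigma,\sigma)$, so either displayed formula for $C_{GN}$ is unambiguous.
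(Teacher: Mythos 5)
Your proposal is correct and follows exactly the paper's own argument: the paper likewise observes that $1/C_{GN}=\inf_{\mathbf{N}}J(u,w)$ and then invokes Lemmas \ref{proper} and \ref{proper1}, with your exponent bookkeeping via \eqref{po2}--\eqref{po4} being precisely the implicit computation. Your closing remark on the independence of $C_{GN}$ from the choice of ground state also matches the remark the paper makes immediately after the corollary.
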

\begin{proof}
It suffices to recall that 
$$
\frac{1}{C_{GN}}=\inf_{\mathbf{N}}J(u,w)
$$
and use Lemmas \ref{proper} and \ref{proper1}.
\end{proof}

\begin{remark}
Note that the constant $C_{GN}$ does not depend on the choice of the ground state $(P,Q)$ since all ground states have the same mass $M$ (and the same action $S$). Hence, the question of uniqueness of ground states is not an issue here.
\end{remark}

With Corollary \ref{gnbest} in hand we can to prove the following Theorem:

 \begin{theorem}\label{globaln=2}
	Assume $n=2$ and $u_0,w_0\in H^1(\R^2)$. Then the Cauchy problem \eqref{nlssystem1} is globally well-posed provided that 
	$$
	M(u_0,w_0)<M(P,Q),
	$$
	where $(P,Q)$ is any ground state  of \eqref{standindsys} with $\omega=0$ and $\mu=3\sigma$.
\end{theorem}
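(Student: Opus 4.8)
The plan is to combine the conservation laws with the sharp Gagliardo--Nirenberg inequality from Corollary~\ref{gnbest}, exactly as in the $n=2$ small-mass argument, but now exploiting the criticality of the exponents when $n=2$. When $n=2$, the Gagliardo--Nirenberg inequality reads
\begin{equation*}
N(u,w)\leq C_{GN}\,K(u,w)\,M(u,w),
\end{equation*}
so that the cubic nonlinearity scales \emph{linearly} in $K$. Inserting this into the energy identity \eqref{gal} (with $n=2$), and using that the mass $M(u(t),w(t))=M(u_0,w_0)$ is conserved, I would write
\begin{equation*}
K(u,w)\leq 2E(u_0,w_0)+2C_{GN}\,K(u,w)\,M(u_0,w_0),
\end{equation*}
which rearranges to
\begin{equation*}
\bigl(1-2C_{GN}\,M(u_0,w_0)\bigr)K(u,w)\leq 2E(u_0,w_0).
\end{equation*}

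The key step is then to identify the coefficient in parentheses and show it is strictly positive under the hypothesis $M(u_0,w_0)<M(P,Q)$. First I would recall from Corollary~\ref{gnbest} that with $n=2$ the sharp constant is
\begin{equation*}
C_{GN}=\frac{(4-n)^{n/2-1}}{n^{n/2}}\frac{1}{M(P,Q)}\Bigg|_{n=2}=\frac{1}{2M(P,Q)},
\end{equation*}
since $(4-2)^{0}=1$ and $2^{1}=2$. Therefore $2C_{GN}M(u_0,w_0)=M(u_0,w_0)/M(P,Q)$, and the strict inequality $M(u_0,w_0)<M(P,Q)$ gives $1-2C_{GN}M(u_0,w_0)=1-M(u_0,w_0)/M(P,Q)>0$. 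Dividing through yields the uniform \emph{a priori} bound
\begin{equation*}
K(u(t),w(t))\leq \frac{2E(u_0,w_0)}{1-M(u_0,w_0)/M(P,Q)}
\end{equation*}
for every $t$ in the maximal interval of existence, the right-hand side being a finite constant depending only on the conserved quantities $E(u_0,w_0)$ and $M(u_0,w_0)$.

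Finally, since $M$ is equivalent to the square of the $L^2\times L^2$ norm and is conserved, the bound on $K$ together with the blow-up alternative of Theorem~\ref{localtheorem}---which asserts that finite-time blow-up forces $K(u,w)\to+\infty$---rules out blow-up in finite time, so $T^*=T_*=+\infty$ and the solution is global. The main subtlety to handle carefully is the precise value of $C_{GN}$: one must verify that the combinatorial constant $(4-n)^{n/2-1}/n^{n/2}$ collapses to exactly $1/2$ at $n=2$, so that the threshold in the coefficient $1-2C_{GN}M(u_0,w_0)$ matches the mass $M(P,Q)$ of the ground state \emph{at resonance} ($\omega=0$, $\mu=3\sigma$) rather than some rescaled multiple of it. The sharpness of this threshold---that global existence can fail once $M(u_0,w_0)\geq M(P,Q)$---is not needed here and is deferred to Theorem~\ref{sharpn=2}.
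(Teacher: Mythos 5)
Your proposal is correct and follows essentially the same route as the paper, which proves Theorem \ref{globaln=2} in one line by inserting the sharp constant $C_{GN}$ of Corollary \ref{gnbest} into the estimate \eqref{gal}; your explicit computation that $C_{GN}=\frac{1}{2M(P,Q)}$ at $n=2$, the resulting bound $\bigl(1-M(u_0,w_0)/M(P,Q)\bigr)K(u,w)\leq 2E(u_0,w_0)$, and the appeal to the blow-up alternative of Theorem \ref{localtheorem} simply spell out what the paper leaves implicit.
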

\begin{proof}
Indeed, It suffices to use \eqref{gal} with the constant $C$ replaced by $C_{GN}$ given in Corollary \ref{gnbest}.
\end{proof}

\bigskip

Next we turn attention to the global well-posedness for $n=3$. We begin by stating the following Lemma, whose proof can be found in \cite{beg}  and \cite{Pastor}:
\begin{lemma}\label{supercase}
	Let $I$ be an open interval with $0\in I$. Let $a\in \R$, $b>0$ and $q>1$. Define $\gamma=(bq)^{-\frac{1}{q-1}}$ and $f(r)=a-r+br^{q}$, for $r\geq 0$. Let $G(t)$ be a nonnegative continuous  function such that $f\circ G\geq 0$ on $I$. Assume that $a<\left(1-\frac{1}{q}\right)\gamma$.
	\begin{enumerate}
		\item[(i)] If $G(0)<\gamma$, then $G(t)<\gamma$, $\forall t\in I$.
		\item[(ii)] If $G(0)>\gamma$, then $G(t)>\gamma$, $\forall t\in I$.
	\end{enumerate}
In addition if $a<(1-\delta_1)\left(1-\frac{1}{q}\right)\gamma$ and $G(0)>\gamma$, for some $\delta_1>0$, then there exists $\delta_2$, depending only on $\delta_1$ such that $G(t)>(1+\delta_2)\gamma$, $\forall t\in I$.
\end{lemma}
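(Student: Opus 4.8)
The plan is to treat Lemma \ref{supercase} as a purely elementary one-variable calculus statement about the function $f(r)=a-r+br^q$ and its interaction with the continuous function $G$. The key observation is that $\gamma=(bq)^{-1/(q-1)}$ is precisely the unique critical point of $f$ on $(0,\infty)$: differentiating gives $f'(r)=-1+bqr^{q-1}$, which vanishes exactly at $r=\gamma$, is negative for $r<\gamma$ and positive for $r>\gamma$. Hence $f$ is strictly decreasing on $[0,\gamma)$ and strictly increasing on $(\gamma,\infty)$, so $\gamma$ is the global minimum of $f$ on $[0,\infty)$. First I would compute the minimum value: a short calculation gives $f(\gamma)=a-\gamma+b\gamma^q$, and since $b\gamma^q=b\gamma\cdot\gamma^{q-1}=\gamma\cdot(bq)^{-1}\cdot q^{-1}\cdot q= \gamma/q$ (using $b\gamma^{q-1}=1/q$), we obtain $f(\gamma)=a-\gamma+\gamma/q=a-(1-\tfrac1q)\gamma$. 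The standing hypothesis $a<(1-\tfrac1q)\gamma$ therefore says exactly that $f(\gamma)<0$.

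The heart of the argument is then a connectedness (intermediate value) argument. Since $f(\gamma)<0$ while $f$ is strictly decreasing on $[0,\gamma)$ and strictly increasing beyond $\gamma$, the set $\{r\ge 0: f(r)\ge 0\}$ splits into two disjoint closed pieces: a bounded interval $[0,r_-]$ and an unbounded interval $[r_+,\infty)$, for some $0\le r_-<\gamma<r_+$ (assuming $f(0)=a\ge 0$; if $a<0$ the left piece is empty, strengthening the conclusion). For part (i): by hypothesis $f\circ G\ge 0$ on $I$, so $G(t)$ never takes values in the forbidden gap $(r_-,r_+)\ni\gamma$; since $G$ is continuous and $I$ is connected (an interval containing $0$), its image is connected and must lie entirely in one of the two pieces. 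As $G(0)<\gamma$ forces the image to meet $[0,r_-]$, the whole image lies in $[0,r_-]\subset[0,\gamma)$, giving $G(t)<\gamma$ for all $t\in I$. Part (ii) is identical with the roles of the two pieces reversed: $G(0)>\gamma$ forces $G(I)\subset[r_+,\infty)$, so $G(t)>\gamma$ throughout.

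For the final quantitative refinement, the idea is that strengthening the hypothesis to $a<(1-\delta_1)(1-\tfrac1q)\gamma$ pushes $f(\gamma)$ strictly below $0$ by a definite margin, which in turn pushes the right endpoint $r_+$ of the forbidden gap strictly above $\gamma$. Concretely, I would locate a value $(1+\delta_2)\gamma$ at which $f$ is still negative: one estimates $f\bigl((1+\delta_2)\gamma\bigr)$ using the explicit form and the improved bound on $a$, choosing $\delta_2>0$ small enough (depending only on $\delta_1$ and $q$, since $\gamma$ and $b$ rescale out) so that $f\bigl((1+\delta_2)\gamma\bigr)<0$. Then $(1+\delta_2)\gamma$ lies in the forbidden gap, so $r_+>(1+\delta_2)\gamma$, and the connectedness argument of part (ii) upgrades to $G(t)\ge r_+>(1+\delta_2)\gamma$ on all of $I$.

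The routine calculus is entirely transparent here; the only point requiring a little care — and the main obstacle — is making the dependence of $\delta_2$ on $\delta_1$ alone fully explicit and scale-invariant. The cleanest way to handle this is to normalize: writing $r=\gamma s$ one reduces to the function $g(s)=\tfrac{f(\gamma s)}{\gamma}=\tfrac{a}{\gamma}-s+\tfrac1q s^q$, whose coefficients no longer involve $b$, so that the threshold $\delta_2$ can be read off from $g$ and depends only on $q$ and the normalized gap $\delta_1(1-\tfrac1q)$. I would present the proof in this normalized form to keep the constants manifestly independent of the problem's scaling.
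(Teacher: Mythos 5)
Your proof is correct, and it is essentially the standard argument: the paper itself gives no proof of Lemma \ref{supercase}, merely citing \cite{beg} and \cite{Pastor}, where the lemma is established exactly as you do it --- $f$ attains its minimum $f(\gamma)=a-\bigl(1-\tfrac1q\bigr)\gamma<0$ at $\gamma$, so $\{f\ge 0\}$ splits into two components separated by a gap containing $\gamma$, and continuity of $G$ on the interval $I$ confines its image to one component, with the strengthened hypothesis on $a$ pushing the right component beyond $(1+\delta_2)\gamma$ via your normalized function $g(s)=\tfrac{a}{\gamma}-s+\tfrac1q s^q$. The only cosmetic caveat is that your $\delta_2$ depends on $q$ as well as $\delta_1$, which is consistent with the statement since $q$ is a fixed parameter of the lemma ($q=3/2$ in the paper's application).
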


Our main theorem here reads as follows.

\begin{theorem}\label{globaln=3}
	Assume $n=3$ and $u_0,w_0\in H^1(\R^3)$. Suppose that
	\begin{equation}\label{absass1}
	E(u_0,w_0)M(u_0,w_0)<\frac{1}{2}E(P,Q)M(P,Q)
	\end{equation}
	and
	\begin{equation}\label{absass2}
	K(u_0,w_0)M(u_0,w_0)<K(P,Q)M(P,Q),
	\end{equation}
	where $(P,Q)$ is any ground state  of \eqref{standindsys} with $\omega=0$ and $\mu=3\sigma$. Then, as long as the local solution given in Theorem \ref{localtheorem} exists, there holds
	\begin{equation}\label{globalcon1}
	K(u(t),w(t))M(u(t),w(t))<K(P,Q)M(P,Q).
	\end{equation}
	In particular, this implies that the Cauchy problem \eqref{nlssystem1} is globally well-posed under conditions \eqref{absass1} and \eqref{absass2}.
\end{theorem}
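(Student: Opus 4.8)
The plan is to track the scaling-invariant quantity $G(t):=K(u(t),w(t))M(u(t),w(t))$ and to show that the hypotheses \eqref{absass1}--\eqref{absass2} keep it below the threshold value $\gamma:=K(P,Q)M(P,Q)$ throughout the maximal interval $(-T_*,T^*)$. Since $M$ is conserved, a bound on $G$ is a bound on $K(u(t),w(t))$, and the blow-up alternative of Theorem \ref{localtheorem} then forces $T_*=T^*=\infty$. The engine for keeping $G$ below $\gamma$ will be Lemma \ref{supercase}, applied with $q=3/2$.

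First I would use conservation of energy together with the positivity of the quadratic part, $\frac12\int(|u|^2+\mu|w|^2)\ge 0$ (here only $\mu>0$ is needed, not resonance), to get the lower bound $E(u,w)\ge \frac12 K(u,w)-N(u,w)$. Feeding in the sharp Gagliardo--Nirenberg inequality from Corollary \ref{gnbest}, $N(u,w)\le C_{GN}K(u,w)^{3/2}M(u,w)^{1/2}$, and multiplying through by the conserved quantity $2M(u,w)=2M(u_0,w_0)$, I obtain
\begin{equation*}
2E(u_0,w_0)M(u_0,w_0)\ge G-2C_{GN}\,G^{3/2}.
\end{equation*}
Writing $a:=2E(u_0,w_0)M(u_0,w_0)$, $b:=2C_{GN}$ and $q:=3/2$, this says precisely that $f(G(t))\ge 0$ on $(-T_*,T^*)$, where $f(r)=a-r+br^{q}$ is the function of Lemma \ref{supercase}; note $G$ is continuous and nonnegative since $(u,w)\in C((-T_*,T^*);H^1\times H^1)$.

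The decisive step is then the bookkeeping of constants. Using $C_{GN}=\bigl(3^{3/2}M(P,Q)\bigr)^{-1}$ (the $n=3$ case of Corollary \ref{gnbest}) one computes $\gamma=(bq)^{-1/(q-1)}=(3C_{GN})^{-2}=3M(P,Q)^2$, and the identities of Lemma \ref{proper}, namely $K(P,Q)=3M(P,Q)=3S(P,Q)$ and $E(P,Q)=S(P,Q)=M(P,Q)$ (recall $\omega=0$, so $S=E$), give $\gamma=K(P,Q)M(P,Q)$ and $(1-\tfrac1q)\gamma=\tfrac13\gamma=M(P,Q)^2=E(P,Q)M(P,Q)$. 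Hence assumption \eqref{absass2} is exactly $G(0)<\gamma$, while \eqref{absass1} is exactly $a=2E(u_0,w_0)M(u_0,w_0)<M(P,Q)^2=(1-\tfrac1q)\gamma$. Lemma \ref{supercase}(i) now yields $G(t)<\gamma$ throughout $(-T_*,T^*)$, which is \eqref{globalcon1}; as $M$ is conserved this bounds $K(u(t),w(t))$ uniformly, and the blow-up alternative gives $T_*=T^*=\infty$.

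I expect the only genuine obstacle to be the constant-matching in the last paragraph: one must verify that the two sharp thresholds of Lemma \ref{supercase}, namely $G(0)<\gamma$ and $a<(1-1/q)\gamma$, correspond term-for-term to \eqref{absass2} and \eqref{absass1}, which hinges on simultaneously using the correct $n=3$ value of $C_{GN}$ and all the ground-state identities of Lemma \ref{proper}. A minor point worth flagging is the off-resonance mismatch between the energy weight $\mu$ and the mass weight $3\sigma$; this causes no difficulty, since the quadratic term is simply discarded by positivity in the lower bound for $E$.
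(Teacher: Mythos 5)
Your proof is correct and takes essentially the same route as the paper: energy conservation plus the sharp constant $C_{GN}$ of Corollary \ref{gnbest} feeds Lemma \ref{supercase} with $q=3/2$, and the ground-state identities of Lemma \ref{proper} match the two thresholds exactly to \eqref{absass1} and \eqref{absass2}. The only (cosmetic) difference is that you absorb the conserved mass into $G(t)=K(u(t),w(t))M(u(t),w(t))$, whereas the paper takes $G(t)=K(u(t),w(t))$ and puts $M(u_0,w_0)^{1/2}$ into the coefficient $b$, so its threshold $\gamma=3M(P,Q)^2/M(u_0,w_0)$ is your scale-invariant $\gamma=K(P,Q)M(P,Q)$ divided by the initial mass.
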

\begin{proof} 
Let $a=2E(u_0,w_0)$,  $ b=2C_{GN}M(u_0,w_0)^{1/2}$, and $q=3/2$. If $G(t)=K(u(t),w(t))$, from \eqref{gal}, with $C_{GN}$ instead of $C$, we obtain $f\circ G\geq0$, where $f(r)=a-r+br^{3/2}$. Also, by using Lemma \ref{proper} we see that
$$
\gamma=\frac{3M(P,Q)^2}{M(u_0,w_0)}.
$$
In addition, a simple calculation using Lemma \ref{proper} also reveals that
$$
a<\left(1-\frac{1}{q}\right)\gamma \quad \Leftrightarrow \quad E(u_0,w_0)M(u_0,w_0)<\frac{1}{2}E(P,Q)M(P,Q)
$$
and
$$
G(0)<\gamma \quad \Leftrightarrow \quad K(u_0,w_0)M(u_0,w_0)<K(P,Q)M(P,Q).
$$
Hence, Lemma \ref{supercase} implies that \eqref{globalcon1} holds. This completes the proof of the theorem.
\end{proof}

\section{Blow up}

In this section we will show some blow up results.

\begin{definition}
We say that the solution of \eqref{nlssystem1}, given in Theorem \ref{localtheorem}, blows up forward in time if $T^*<\infty$ and backward in time if $T_*<\infty$. We say that the solution blows up if it blows up forward and backward in time.
\end{definition}

Our results of this Section will show that the condition in Theorem \ref{globaln=2} is sharp, at least for some parameters $\sigma$ and $\mu$. Actually, in the case $n=2$ we can construct an explicit solution that blows up, say, forward in time.

\begin{theorem}\label{sharpn=2}
 Assume $n=2$, $\sigma=3$, and $\mu=9$. Let $(P,Q)$ be any ground state  of \eqref{standindsys} with $\omega=0$ (and $\mu=3\sigma$). Then, there exists $u_0,w_0\in H^1$ satisfying $M(u_0,w_0)=M(P,Q)$ such that the corresponding solution of the Cauchy problem \eqref{nlssystem1} blows up  forward in time.
\end{theorem}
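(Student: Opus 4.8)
The plan is to construct the blow-up solution explicitly via the pseudo-conformal (lens) transform, exploiting the fact that at resonance ($\sigma=3$, $\mu=9=3\sigma$) the cubic system in dimension $n=2$ is $L^2$-critical and pseudo-conformally invariant. First I would remove the linear potential terms by the gauge substitution $u=e^{-it}\tilde u$, $w=e^{-3it}\tilde w$; a direct check shows this is consistent precisely because the harmonic structure of the coupling ($\tfrac13\overline u^2w$ and $\tfrac19 u^3$) forces the two phases to be in the ratio $1:3$, and the condition $\mu=9=3\sigma$ makes both potential coefficients vanish simultaneously. In the resulting potential-free system, $(\tilde u,\tilde w)=(e^{it}P,e^{3it}Q)$ is a standing wave, where $(P,Q)$ is the ground state of \eqref{standindsys} with $\omega=0$ and $\mu=3\sigma$.

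Next I would write down the candidate solution. Setting $\varphi(x,t)=\frac{|x|^2}{4t}-\frac1t-t$, I claim that for $t>0$ the pair
\[
u(x,t)=\frac1t\,e^{i\varphi(x,t)}\,P\!\Big(\frac xt\Big),\qquad
w(x,t)=\frac1t\,e^{3i\varphi(x,t)}\,Q\!\Big(\frac xt\Big)
\]
solves \eqref{nlssystem1}. The verification is a direct computation: substituting the ansatz and using that $(P,Q)$ satisfies \eqref{standindsys} with $\omega=0$, $\mu=3\sigma$, all terms collapse, the key point being that $u^3$ carries the phase $e^{3i\varphi}$ (matching the $w$-equation) while $\overline u^2 w$ carries the phase $e^{i\varphi}$ (matching the $u$-equation). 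This phase matching is exactly the content of the resonance relations $\sigma=3$, $\mu=9$, and it is the step I expect to be the main obstacle, since one must check that the different dispersive weight $\sigma=3$ in front of $w_t$ is compensated by the factor $3$ in the phase $e^{3i\varphi}$ so that the Laplacian and the pseudo-conformal phase transform with the same scaling in both components.

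Once the pair is known to be a solution, the two remaining assertions are routine. For the mass, the $L^2$-critical scaling $t^{-n/2}=t^{-1}$ (with $n=2$) is norm-preserving, so by \eqref{mass},
\[
M(u(t),w(t))=\int\Big(|u|^2+3\sigma|w|^2\Big)=\int\Big(P^2+3\sigma\,Q^2\Big)=M(P,Q),
\]
which is the asserted equality, constant in $t$. For the blow-up, differentiating the phase gives $\nabla\varphi=\frac{x}{2t}$, so $\|\nabla u(t)\|_{L^2}^2$ and $\|\nabla w(t)\|_{L^2}^2$ diverge like $C/t^2$ as $t\to0^+$; hence $K(u(t),w(t))\to+\infty$, consistent with the blow-up alternative of Theorem \ref{localtheorem}.

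Finally, the solution above blows up as $t\to0^+$, i.e.\ backward in time from any $t_0>0$. To obtain \emph{forward} blow-up I would invoke the time-reversal symmetry of the system: if $(u,w)$ solves \eqref{nlssystem1}, then $(x,t)\mapsto\big(\overline{u(x,-t)},\overline{w(x,-t)}\big)$ solves it as well (the conjugation is compatible with the coupling terms, since $\overline{\tfrac19 u^3}=\tfrac19\overline u^3$ and $\overline{\tfrac13\overline u^2 w}=\tfrac13 u^2\overline w$ close the system). Applying this to the explicit solution produces a solution defined for $t<0$ with $K\to+\infty$ as $t\to0^-$; choosing $(u_0,w_0)$ to be its values at a fixed negative time and using uniqueness together with the blow-up alternative in Theorem \ref{localtheorem} shows that the local solution issued from this data coincides with the explicit one and blows up forward in time, while $M(u_0,w_0)=M(P,Q)$ by the mass computation above. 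This completes the proof.
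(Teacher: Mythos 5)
Your proposal is correct and follows essentially the same route as the paper: a gauge change $u=e^{-it}\widetilde u$, $w=e^{-3it}\widetilde w$ (valid precisely because $\mu=3\sigma$), followed by the pseudo-conformal (lens) transform applied to the standing wave $(e^{it}P,e^{3it}Q)$ of the potential-free system, with the $1{:}3$ phase ratio compensating $\sigma=3$ exactly as you identify. The only cosmetic difference is that the paper's transformed solution blows up forward at $t=1$ directly, whereas you write the self-similar profile blowing up as $t\to0^+$ and then use the conjugation/time-reversal symmetry to convert backward into forward blow-up.
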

\begin{proof}
First we note that $(u,w)$ is a solution of \eqref{nlssystem1} if and only if 
$$
\widetilde{u}(x,t)=e^{it}u(x,t), \qquad \widetilde{w}(x,t)=e^{3it}w(x,t)
$$
is a solution of 
\begin{equation}\label{nlssystem2}
\begin{cases}
i\widetilde{u}_t+\Delta \widetilde{u}+(\frac{1}{9}|\widetilde{u}|^2+2|\widetilde{w}|^2)\widetilde{u}+\frac{1}{3}\overline{\widetilde{u}}^2\widetilde{w}=0,\\
i\sigma \widetilde{w}_t+\Delta \widetilde{w}+(9|\widetilde{w}|^2+2|\widetilde{u}|^2)\widetilde{w}+\frac{1}{9}\widetilde{u}^3=0,\\
\widetilde{u}(x,0)=u_0(x), \quad \widetilde{w}(x,0)=w_0(x).
\end{cases}
\end{equation}
Actually, this equivalence is true only under the condition $\mu=3\sigma$. So the problem is reduced to showing that \eqref{nlssystem2} has a solution with  $M(u_0,w_0)=M(P,Q)$ that blows up forward in time.

Next, a tedious but straightforward calculation gives that if $(\widetilde{u},\widetilde{w})$ is a solution of the differential equations in \eqref{nlssystem2} so is the pair $(\widehat{u},\widehat{w})$ defined by
$$
\widehat{u}(x,t)=\frac{1}{1-t}e^{-\frac{i|x|^2}{4(1-t)}}\widetilde{u}\left(\frac{x}{1-t},\frac{t}{1-t}\right), \quad \widehat{w}(x,t)=\frac{1}{1-t}e^{-\frac{3i|x|^2}{4(1-t)}}\widetilde{w}\left(\frac{x}{1-t},\frac{t}{1-t}\right).
$$
In addition,
$$
\widehat{u}(x,0)=e^{-\frac{i|x|^2}{4}}u_0(x), \qquad \widehat{u}(x,0)=e^{-\frac{3i|x|^2}{4}}w_0(x).
$$
Finally, by taking
$$
\widetilde{u}(x,t)=e^{it}P(x), \qquad \widetilde{w}(x,t)=e^{3it}Q(x),
$$ 
it is easily  seen that $(\widetilde{u},\widetilde{w})$ is a solution of the equations in \eqref{nlssystem2}. Consequently, 
$$
\widehat{u}(x,t)=\frac{1}{1-t}e^{-\frac{i|x|^2}{4(1-t)}}e^{\frac{it}{1-t}}P\left(\frac{x}{1-t}\right), \quad \widehat{w}(x,t)=\frac{1}{1-t}e^{-\frac{3i|x|^2}{4(1-t)}}e^{\frac{3it}{1-t}}Q\left(\frac{x}{1-t}\right)
$$
is a solution of \eqref{nlssystem2} that blows up at time $t=1$ and satisfies $M(\widehat{u}(0),\widehat{w}(0))=M(P,Q)$.
\end{proof}

\begin{remark}
By using the same ideas as in the proof of Theorem \ref{sharpn=2} one can construct a blowing up solution at any time $T\neq0$. In particular, we can also construct a solution that blows up backward in time.
\end{remark}

The Theorem \ref{sharpn=2} holds only in dimension $d=2$, the critical dimension.
Next we will obtain some virial identities to system \eqref{nlssystem}. First observe
 that \eqref{nlssystem} can be written in the pseudo-Hamiltonian form
\begin{equation}
\frac{d}{dt}X(t)=\Lambda E'(X(t)),
\end{equation}
where $X(t)=(u(t),w(t))$, $E'$ stands for the Fr\'echet derivative of $E$, and $\Lambda$ is the skew-adjoint operator given by
\begin{equation}
\Lambda=\left(
\begin{array}{cc}
-i & 0\\
0 & - i/\sigma
\end{array}
\right).
\end{equation}

\begin{proposition}\label{virialprop}
Assume
$$
u_0,w_0\in H^1(\R^n)\cap L^2(\R^n,|x|^2dx)=:\mathbb{H}
$$
and define
$$
V(t)=\int |x|^2(|u(t)|^2+3\sigma|w(t)|^2),
$$
where $(u(t),w(t))$ is the maximal solution of \eqref{nlssystem1}, with initial data $(u_0,w_0)$, and defined in the maximal time interval $[0,T^*)$. Then $V\in C^2\left([0,T^*)\right)$. In addition,
\begin{equation}\label{vlinha}
V'(t)=4Im \int \left(\overline{u}(t)x\cdot\nabla u(t)+3 \overline{w}(t)x\cdot\nabla w(t)\right)
\end{equation}
and 
\begin{equation}\label{vdoislinha}
\begin{split}
V''(t)&=\int\left(8|\nabla u|^2 +8|\nabla w|^2- \frac{2n}{9}|u|^4-\frac{54n}{\sigma}| w|^4-8n|u|^2|w|^2\right)\\
&\quad +2\left(\frac{24}{\sigma}-8\right)\Re e\int \overline{u}|w|^2x\cdot\nabla u +\frac{1}{9}\left(\frac{12}{\sigma}-12\right)n \Re e\int \overline{u}^3 w\\
&\quad +\frac{1}{9}\left(\frac{24}{\sigma}-8\right) \Re e\int 3\overline{u}^2 wx\cdot\nabla u.
\end{split}
\end{equation}
\end{proposition}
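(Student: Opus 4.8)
The plan is to carry out the classical virial (variance) computation for Schr\"odinger equations, adapted to the coupled system \eqref{nlssystem1}, the key structural point being that the weight $3\sigma$ on the second component is chosen precisely so that the cross terms cancel at the level of $V'$. Before any differentiation, I would settle the $C^2$ regularity and the legitimacy of differentiating under the integral sign, which is the technical heart of the statement. Since $|x|^2$ is unbounded one cannot differentiate $V$ directly; instead I would introduce smooth compactly supported cut-offs $\varphi_R$ with $\varphi_R(x)=|x|^2$ on $|x|\le R$ and $|\nabla\varphi_R|,|D^2\varphi_R|$ bounded uniformly in $R$, set $V_R(t)=\int \varphi_R\,(|u|^2+3\sigma|w|^2)$, perform the computation for $V_R$ (justified since $\varphi_R$ is bounded and $(u,w)$ is $H^1$), and pass to the limit $R\to\infty$. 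The limit requires the persistence $xu(t),xw(t)\in L^2$ on $[0,T^*)$, with continuity in $t$; I would obtain this either by running the fixed-point argument of Theorem \ref{localtheorem} directly in $\mathbb H$, or by approximating $(u_0,w_0)$ by Schwartz data, working with the resulting smooth rapidly decaying solutions (for which all manipulations are rigorous), and invoking continuous dependence in $\mathbb H$. I expect this persistence step to be the main obstacle; the rest is algebraic.

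For $V'$ I would differentiate $V_R$ using $\frac{d}{dt}|u|^2=2\Re e(\overline{u}u_t)$ and $\frac{d}{dt}|w|^2=2\Re e(\overline{w}w_t)$, and substitute $u_t,w_t$ from \eqref{nlssystem1}. Multiplying by $i$, respectively $i/\sigma$, turns the potential terms $-u,-\mu w$ and the gauge-invariant self-interaction terms into purely imaginary quantities, so their real parts vanish; the Laplacian terms, after integrating by parts against $\varphi_R$, yield the transport expression in \eqref{vlinha}. The only delicate contributions are the coupling terms $\tfrac13\overline{u}^2w$ and $\tfrac19 u^3$. Writing $z=u^3\overline{w}$ and using $\Re e(i\overline{z})=-\Re e(iz)$, the term $\tfrac23\Re e\int \varphi_R\, i\,u^3\overline{w}$ produced by the $w$-equation exactly cancels the term $\tfrac23\Re e\int \varphi_R\, i\,\overline{u}^3w$ produced by the $u$-equation, the factor $3\sigma$ in the definition of $V$ being exactly what balances the $1/\sigma$ from $w_t$. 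Letting $R\to\infty$ gives \eqref{vlinha}, which notably carries no nonlinear contribution.

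For $V''$ I would differentiate \eqref{vlinha} once more, again substitute the equations, and integrate by parts repeatedly: the quadratic Laplacian part reduces, via the standard commutator manipulation for $x\cdot\nabla$ and $\Delta$, to the terms $8\int|\nabla u|^2+8\int|\nabla w|^2$, while each nonlinearity is treated by a Pohozaev-type integration by parts of the form $\int \overline{u}^2 w\,x\cdot\nabla u$ and $\int\overline{u}|w|^2 x\cdot\nabla u$. In contrast to $V'$, the coupling terms no longer cancel: the weight mismatch between the two equations survives and produces precisely the $\sigma$-dependent coefficients $\tfrac{24}{\sigma}-8$, $\tfrac{12}{\sigma}-12$, and $\tfrac{24}{\sigma}-8$ in \eqref{vdoislinha}, all of which vanish at resonance $\sigma=3$. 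Here I anticipate the real difficulty to be bookkeeping rather than conceptual: tracking the many integration-by-parts terms and checking that the self-interaction quartics assemble into $-\tfrac{2n}{9}|u|^4-\tfrac{54n}{\sigma}|w|^4-8n|u|^2|w|^2$ with the stated constants. The regularity issues having been dealt with once by the cut-off and persistence argument, the passage $R\to\infty$ then delivers \eqref{vdoislinha}.
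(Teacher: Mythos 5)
Your proposal is correct in substance, but it follows the classical direct route, which is genuinely different from what the paper does. You differentiate $V$ (suitably truncated by cut-offs $\varphi_R$), substitute the equations, and integrate by parts, with the regularity justified by a persistence argument for the variance in $\mathbb{H}$. The paper instead ``proceeds formally'' via a duality trick exploiting the skew-adjointness of $\Lambda$ in the pseudo-Hamiltonian form $\frac{d}{dt}X=\Lambda E'(X)$: writing $V'(t)=\langle \mathcal V'(X),\Lambda E'(X)\rangle=:P(X)$, it observes that $P(Y_0)=-\frac{d}{dt}E(Y(t))\big|_{t=0}$ where $Y$ solves the \emph{auxiliary} flow $\dot Y=\Lambda\mathcal V'(Y)$, which is explicitly solvable as a phase modulation $(e^{-2i|x|^2t}\widetilde u_0,\,e^{-6i|x|^2t}\widetilde w_0)$; the identity \eqref{vlinha} then drops out by differentiating the energy along this explicit flow. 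For $V''$ the same device is applied to the functional $G(u,w)=4\,{\rm Im}\int(\overline u\,x\cdot\nabla u+3\overline w\,x\cdot\nabla w)$, whose generated flow is an explicit dilation; \eqref{vdoislinha} is then read off by differentiating explicit scaling exponents in $E(Y(t))$ at $t=0$. What each approach buys: the paper's method eliminates essentially all integration-by-parts bookkeeping (the $\sigma$-dependent coefficients emerge transparently from mismatched dilation rates of the two components), but is avowedly formal and leaves the rigorous justification implicit; your cut-off-plus-persistence scheme supplies exactly that missing rigor, at the cost of the heavier computation you anticipate. Your cancellation analysis for $V'$ (the $\tfrac{1}{3}\overline u^2w$ and $\tfrac19 u^3$ contributions cancelling thanks to the weight $3\sigma$) is correct and matches the paper's conclusion that \eqref{vlinha} carries no nonlinear term.

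One factual slip to fix before carrying out the computation: you assert that the coefficients $\tfrac{24}{\sigma}-8$, $\tfrac{12}{\sigma}-12$, $\tfrac{24}{\sigma}-8$ ``all vanish at resonance $\sigma=3$.'' Only the first and third do; $\tfrac{12}{\sigma}-12=-8$ at $\sigma=3$, so the term $\tfrac19\left(\tfrac{12}{\sigma}-12\right)n\,\Re e\int\overline u^3w$ survives at resonance (as does the $\sigma$-dependent quartic $-\tfrac{54n}{\sigma}|w|^4$). This is precisely why Corollary \ref{corviri} must absorb the remaining $\Re e\int\overline u^3 w$ term using the conserved energy rather than having it drop out. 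The slip does not damage your method---the direct computation, done carefully, produces the stated coefficients---but the expectation that the virial identity is coupling-free at $\sigma=3$ would mislead the bookkeeping you rightly identify as the main hazard.
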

\begin{proof}
We proceed formally. Introduce the functional
$$
\V(u,w)=\int |x|^2(|u|^2+3\sigma|w|^2)
$$
and note that $V(t)=\V(u(t),w(t))\equiv \V(X(t))$. Thus,
\begin{equation}
V'(t)=\frac{d}{dt}\V(X(t))=\langle \V'(X(t)),\frac{d}{dt}X(t)\rangle=\langle \V'(X(t)),JE'(X(t))\rangle=:P(X(t)).
\end{equation}
Thus, in order to determine $V'(t)$, it suffices to determine the functional $P$. To do so, we use a dual Hamiltonian system. Indeed, given $Y_0=(\widetilde{u}_0,\widetilde{w}_0)\in \mathbb{H}$, assume the initial-value problem
\begin{equation}\label{auxHam}
\frac{d}{dt}Y(t)=\Lambda\V'(Y(t)), \qquad Y(0)=Y_0
\end{equation}
is (at least) locally well-posed. Then
\begin{equation}
\frac{d}{dt}E(Y(t))=\langle E'(Y(t)),\frac{d}{dt}Y(t)\rangle=\langle E'(Y(t)),\Lambda\V'(Y(t))\rangle =-\langle \V'(Y(t)),\Lambda E'(Y(t))\rangle=-P(Y(t)).
\end{equation}
Evaluating at $t=0$, we deduce
$$
P(Y_0)=-\frac{d}{dt}E(Y(t))\Big|_{t=0}.
$$
In conclusion, in order to determine the first derivative of $V(t)$, it suffices to solve \eqref{auxHam} and then take the derivative of the energy at this solution evaluated at $t=0$.

Next we solve \eqref{auxHam}. Indeed, if $Y(t)=(\widetilde{u}(t),\widetilde{w}(t))$, it easy to see that \eqref{auxHam} is equivalent to
\[
\begin{cases}
\dfrac{d}{dt}(\widetilde{u}(t),\widetilde{w}(t))=(-2i|x|^2\widetilde{u},-6i|x|^2\widetilde{w})\\
\widetilde{u}(0)=\widetilde{u}_0, \;\;\widetilde{w}(0)=\widetilde{w}_0,
\end{cases}
\]
whose solution is
$$
Y(t)=(\widetilde{u}(t),\widetilde{w}(t))=(e^{-2i|x|^2t}\widetilde{u}_0, e^{-6i|x|^2t}\widetilde{w}_0).
$$
Hence,
\[
\begin{split}
P(Y_0)&=-\frac{d}{dt}E(Y(t))\Big|_{t=0}=\frac{1}{2}\left(\int |\nabla \widetilde{u}(t)|^2+ |\nabla \widetilde{w}(t)|^2 \right)\Big|_{t=0}\\
&= 4Im \int \left( \overline{\widetilde{u}}_0x\cdot\nabla \widetilde{u}_0 +3\overline{\widetilde{w}}_0x\cdot\nabla \widetilde{w}_0 \right).
\end{split}
\]
This establishes \eqref{vlinha}.

To compute $V''(t)$ we use the above argument replacing $V(t)$ by $V'(t)$ and $\V(u,w)$ by
$$
G(u,w)=4Im \int \left(\overline{u}x\cdot\nabla u+3 \overline{w}x\cdot\nabla w\right).
$$ 
Since 
$$
G'(u,w)=-4i\left(2x\cdot\nabla u+nu, 6x\cdot\nabla w+3nw\right),
$$
we see that
\begin{equation}\label{auxHam1}
\frac{d}{dt}Y(t)=JG'(Y(t)), \qquad Y(0)=Y_0
\end{equation}
is equivalent to
\[
\begin{cases}
\dfrac{d}{dt}(\widetilde{u}(t),\widetilde{w}(t))=(-8x\cdot\nabla \widetilde{u}-4n\widetilde{u},-\frac{24}{\sigma}x\cdot\nabla \widetilde{w}-\frac{12n}{\sigma}\widetilde{w})\\
\widetilde{u}(0)=\widetilde{u}_0, \;\;\widetilde{w}(0)=\widetilde{w}_0,
\end{cases}
\]
It is not difficult to check that the solution of the above initial-value problem is
$$
Y(t)=(\widetilde{u}(t),\widetilde{w}(t))=(e^{-4nt}\widetilde{u}_0(e^{-8t}x), e^{-\frac{12}{\sigma}t}\widetilde{w}_0(e^{-\frac{24}{\sigma}t}x)).
$$
Hence,
\[
\begin{split}
E(Y(t))&=\int\left(\frac{1}{2}e^{-16t}|\nabla \widetilde{u}_0|^2 +\frac{1}{2}e^{-\frac{48}{\sigma}t}|\nabla \widetilde{w}_0|^2- \frac{1}{36}e^{-8nt}|\widetilde{u}_0|^4- \frac{9}{4}e^{-\frac{24}{\sigma}nt}| \widetilde{w}_0|^4+\frac{1}{2} |\widetilde{w}_0|^2+\frac{\mu}{2}|\widetilde{w}_0|^2\right)\\
&\quad -e^{-8nt}\int|\widetilde{u}_0(e^{\left(\frac{24}{\sigma}-8\right)t}x) |^2|\widetilde{w}_0(x)|^2dx -\frac{1}{9}e^{\left( \frac{12}{\sigma}-12\right)nt}Re\int \overline{\widetilde{u}}_0^3(e^{\left(\frac{24}{\sigma}-8\right)t}x) \widetilde{w}_0(x)dx
\end{split}
\]
and
\[
\begin{split}
\frac{d}{dt}E(Y(t))\Big|_{t=0}&=\int\left(-\frac{16}{2}|\nabla \widetilde{u}_0|^2 -\frac{48}{2\sigma}|\nabla \widetilde{w}_0|^2+ \frac{8n}{36}|\widetilde{u}_0|^4+ \frac{9}{4}\frac{24n}{\sigma}| \widetilde{w}_0|^4+8n|\widetilde{u}_0|^2|\widetilde{w}_0|^2\right)\\
&\quad -2\left(\frac{24}{\sigma}-8\right)Re\int \overline{\widetilde{u}}_0|\widetilde{w}_0|^2x\cdot\nabla \widetilde{u}_0 -\frac{1}{9}\left(\frac{12}{\sigma}-12\right)n Re\int \overline{\widetilde{u}}_0^3 \widetilde{w}_0\\
&\quad -\frac{1}{9}\left(\frac{24}{\sigma}-8\right) Re\int 3\overline{\widetilde{u}}_0^2 \widetilde{w}_0x\cdot\nabla\widetilde{u}_0.
\end{split}
\]
Consequently, by recalling that $V''(t)$ must be the  above expression (with the opposite sign) when we replace $(\widetilde{u}_0,\widetilde{w}_0)$ by $(u(t),w(t))$, \eqref{vdoislinha} follows. The proof of the proposition is thus completed.
\end{proof}

\begin{corollary}\label{corviri}
Under the assumptions of Proposition \ref{virialprop}, if $\sigma=3$ then
$$
V''(t)=8nE(u_0,w_0)+4(2-n)\int(|\nabla u|^2+|\nabla w|^2)-4n\int (|u|^2+\mu|w|^2)
$$
\end{corollary}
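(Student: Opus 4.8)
The plan is simply to specialize the general formula \eqref{vdoislinha} from Proposition \ref{virialprop} to $\sigma=3$, and then reorganize the resulting expression in terms of the energy $E$, invoking its conservation along the flow. The first thing I would do is observe that the choice $\sigma=3$ produces the crucial cancellation $\frac{24}{\sigma}-8=0$. This annihilates both terms in \eqref{vdoislinha} that contain $x\cdot\nabla u$, namely $2\big(\frac{24}{\sigma}-8\big)\Re e\int\overline{u}|w|^2 x\cdot\nabla u$ and $\frac{1}{9}\big(\frac{24}{\sigma}-8\big)\Re e\int 3\overline{u}^2 w\, x\cdot\nabla u$. This is the key structural point: only at $\sigma=3$ do these geometrically awkward, non-homogeneous terms disappear, leaving an expression every term of which is directly comparable with the integrand of the energy.

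With $\sigma=3$ one simultaneously computes $\frac{54n}{\sigma}=18n$ and $\frac{1}{9}\big(\frac{12}{\sigma}-12\big)n=-\frac{8n}{9}$, so that \eqref{vdoislinha} collapses to
$$
V''(t)=\int\left(8|\nabla u|^2+8|\nabla w|^2-\tfrac{2n}{9}|u|^4-18n|w|^4-8n|u|^2|w|^2\right)-\tfrac{8n}{9}\Re e\int\overline{u}^3 w.
$$
The substantive (though routine) work is then the algebraic step of matching this against $8nE$. Writing $E=\frac{1}{2}\int(|\nabla u|^2+|\nabla w|^2+|u|^2+\mu|w|^2)-N_c$, where $N_c$ denotes the full quartic nonlinear part appearing in \eqref{energy}, I would multiply by $8n$ and isolate the nonlinear contribution: one checks that
$$
8nN_c=\int\left(\tfrac{2n}{9}|u|^4+18n|w|^4+8n|u|^2|w|^2\right)+\tfrac{8n}{9}\Re e\int\overline{u}^3 w,
$$
which reproduces exactly the quartic and the $\Re e\int\overline{u}^3 w$ terms of $V''(t)$ above. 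Substituting back, the mass-type contribution $4n\int(|u|^2+\mu|w|^2)$ coming out of $8nE$ is precisely cancelled by the explicit $-4n\int(|u|^2+\mu|w|^2)$ in the claimed identity, while the gradient coefficients combine as $4n+4(2-n)=8$, matching the factor $8$ in front of $\int(|\nabla u|^2+|\nabla w|^2)$ in $V''(t)$.

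Finally, since $E$ is conserved by \eqref{nlssystem1}, one has $E(u(t),w(t))=E(u_0,w_0)$, which allows me to replace the time-dependent energy by its initial value and thereby recover the stated formula. I do not anticipate any genuine obstacle: once the cancellation at $\sigma=3$ is exploited, the only care required is the bookkeeping of the coefficients $\frac{2n}{9}$, $18n$, $8n$, and $\frac{8n}{9}$, ensuring they assemble exactly into $8nN_c$, which is a direct verification.
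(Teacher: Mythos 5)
Your proposal is correct and follows essentially the same route as the paper: specialize \eqref{vdoislinha} to $\sigma=3$, where $\frac{24}{\sigma}-8=0$ annihilates the two $x\cdot\nabla u$ terms, and then use the definition together with the conservation of the energy to trade the remaining quartic terms (your $8nN_c$) for gradient and mass contributions. The paper organizes the same bookkeeping slightly differently, passing first through an intermediate expression with $16E(u_0,w_0)$ and then substituting the energy definition a second time, but the content is identical and all your coefficient checks ($\frac{2n}{9}$, $18n$, $8n$, $\frac{8n}{9}$, and $4n+4(2-n)=8$) are accurate.
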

\begin{proof}
It follows easily from Proposition \ref{virialprop}. Indeed, a simple computation yields
\[
\begin{split}
V''(t)=16E(u_0,w_0)+8(2-n)\int \left(\frac{1}{36}|u|^4+\frac{9}{4} |w|^4+|uw|^2+\frac{1}{9}Re\overline{u}^3w\right)-8\int (|u|^2+\mu|w|^2),
\end{split}
\]
and, by the definition of the energy functional,
\[
\begin{split}
V''(t)&=16E(u_0,w_0)+8(2-n)\left[ \frac{1}{2}\int \left(|\nabla u|^2+|\nabla w|^2+|u|^2+\mu|w|^2\right)-E(u_0,v_0)\right]\\
&\quad-8\int (|u|^2+\mu|w|^2)\\
&=8nE(u_0,w_0)+4(2-n)\int(|\nabla u|^2+|\nabla w|^2)-4n\int (|u|^2+\mu|w|^2),
\end{split}
\]
as claimed.
\end{proof}

With Corollary \ref{corviri} in hand we can also show that, under the assumption \eqref{absass1}, the condition \eqref{absass2} is sharp (at least in the case $\sigma=3$ and $\mu=9$) to obtain the global well posedness of \eqref{nlssystem1}. More precisely, we have

\begin{theorem}\label{sharpn=3}
	Assume $n=3$, $\sigma=3$, $\mu=9$. Suppose that
	\begin{equation}\label{absass3}
	E(u_0,w_0)M(u_0,w_0)<\frac{1}{2}E(P,Q)M(P,Q)
	\end{equation}
	and
	\begin{equation}\label{absass4}
		K(u_0,w_0)M(u_0,w_0)>K(P,Q)M(P,Q),
	\end{equation}
	where $(P,Q)$ is any ground state  of \eqref{standindsys} with $\omega=0$ (and $\mu=3\sigma$). Then, as long as the local solution given in Theorem \ref{localtheorem} exist there holds
	\begin{equation}\label{sharpcon1}
	K(u(t),w(t))M(u(t),w(t))>K(P,Q)M(P,Q).
	\end{equation}
	In particular, if $u_0,w_0\in\mathbb{H}$
	then the solution blows up in time.
\end{theorem}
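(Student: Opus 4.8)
The plan is to follow the proof of Theorem~\ref{globaln=3} almost verbatim, only reversing the relevant inequality, and then to convert the resulting lower bound on the kinetic energy into blow-up by means of the virial identity of Corollary~\ref{corviri}.

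First I would establish \eqref{sharpcon1}. Exactly as in the proof of Theorem~\ref{globaln=3}, set $a=2E(u_0,w_0)$, $b=2C_{GN}M(u_0,w_0)^{1/2}$, $q=3/2$, $G(t)=K(u(t),w(t))$ and $f(r)=a-r+br^{3/2}$, so that the Gagliardo--Nirenberg bound \eqref{gal} (with $C_{GN}$ in place of $C$) gives $f\circ G\geq0$ on the maximal interval of existence, and $\gamma=(bq)^{-1/(q-1)}=3M(P,Q)^2/M(u_0,w_0)$. As computed there, \eqref{absass3} is equivalent to $a<(1-1/q)\gamma$; the only difference is that \eqref{absass4} now reads $G(0)>\gamma$ in place of $G(0)<\gamma$. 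Hence part~(ii) of Lemma~\ref{supercase}, applied on the whole interval of existence, gives $G(t)>\gamma$ for all $t$. Since $M$ is conserved and, by \eqref{po3} with $n=3$, $K(P,Q)=3M(P,Q)$ (so that $\gamma M(u_0,w_0)=3M(P,Q)^2=K(P,Q)M(P,Q)$), this is exactly \eqref{sharpcon1}.

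For the blow-up I would assume in addition $u_0,w_0\in\mathbb{H}$, so that $V(t)=\int|x|^2(|u(t)|^2+3\sigma|w(t)|^2)$ is well defined and, by Proposition~\ref{virialprop}, belongs to $C^2$. Since $\sigma=3$, $\mu=9$ and $n=3$, Corollary~\ref{corviri} reduces (writing $M_0=M(u_0,w_0)$, $E_0=E(u_0,w_0)$ and $K(t)=K(u(t),w(t))$) to
$$
V''(t)=24E_0-4K(t)-12M_0,
$$
where I used that $\int(|u|^2+\mu|w|^2)=M_0$ is conserved at resonance. Multiplying by $M_0$ and invoking \eqref{absass3} and \eqref{sharpcon1}, together with the identities $E(P,Q)=M(P,Q)$ and $K(P,Q)=3M(P,Q)$ read off from \eqref{po2}--\eqref{po3} (with $S=E$ since $\omega=0$), I obtain
$$
M_0V''(t)=24E_0M_0-4K(t)M_0-12M_0^2<12M(P,Q)^2-12M(P,Q)^2-12M_0^2=-12M_0^2<0.
$$
Hence $V''(t)\leq-c$ on the interval of existence for some constant $c>0$.

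The conclusion is then the standard convexity argument: a nonnegative $C^2$ function whose second derivative is bounded above by a negative constant cannot be defined on a half-line, so, since the bound above holds both forward and backward in time, $T^*$ and $T_*$ are both finite and the solution blows up. I expect the argument to be essentially a reversed-inequality rerun of Theorem~\ref{globaln=3} followed by a routine virial estimate; the only point needing a little care is to guarantee that $V''$ is bounded above by a \emph{uniform} negative constant, which here is automatic because \eqref{absass3} is strict and, at resonance $\mu=3\sigma$, the potential term coincides with the conserved mass and therefore contributes the fixed negative quantity $-12M_0^2$.
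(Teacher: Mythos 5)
Your proof is correct, and its skeleton coincides with the paper's: the first part is exactly the paper's argument (rerun the proof of Theorem~\ref{globaln=3} with the same $a$, $b$, $q$, $\gamma$ and invoke part~(ii) of Lemma~\ref{supercase} instead of part~(i)), and blow-up comes from Corollary~\ref{corviri} plus the standard convexity argument. The one place you genuinely diverge is in how the \emph{uniform} negative upper bound on $V''$ is secured. The paper discards the term $-4n\int(|u|^2+\mu|w|^2)$ from Corollary~\ref{corviri} and therefore needs the quantitative addendum of Lemma~\ref{supercase}: it strengthens \eqref{absass3} to $E(u_0,w_0)M(u_0,w_0)<\frac{1}{2}(1-\delta_1)E(P,Q)M(P,Q)$ for a small $\delta_1>0$, obtains $K(u(t),w(t))M(u_0,w_0)>(1+\delta_2)K(P,Q)M(P,Q)$, and concludes (writing $M_0=M(u_0,w_0)$) that $M_0V''(t)\leq-4(\delta_1+\delta_2)K(P,Q)M(P,Q)$, using $K(P,Q)=3E(P,Q)$. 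You instead retain that term, which at resonance $\mu=3\sigma=9$ equals the conserved mass and contributes the fixed quantity $-12M_0$ to $V''$; then the plain strict conclusion $G(t)>\gamma$ of part~(ii), combined with the fixed strict gap in \eqref{absass3}, already gives $V''(t)<-12M_0$ with no $\delta_1,\delta_2$ bookkeeping (note $M_0>0$ is forced by \eqref{absass4}, so this is indeed a negative constant). Your closing step is thus slightly more elementary but tied to the resonance hypothesis, whereas the paper's $\delta$-argument does not use conservation of the potential term and so is the more robust mechanism; under the stated hypotheses both are complete and correct, and your computation of $\gamma=3M(P,Q)^2/M_0$, the equivalences with \eqref{absass3}--\eqref{absass4} via $E(P,Q)=M(P,Q)$ and $K(P,Q)=3M(P,Q)$, and the virial identity $V''=24E_0-4K(t)-12M_0$ all check out.
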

\begin{proof}
In view of  (ii) in Lemma \ref{supercase}, the proof of the first part is similar to the one of the Theorem \ref{globaln=3}; so we omit the details.

Assume now $u_0,w_0\in \mathbb{H}$. From  assumption \eqref{absass3}, we can find a sufficiently small $\delta_1>0$ satisfying
	$$
E(u_0,w_0)M(u_0,w_0)<\frac{1}{2}(1-\delta_1)E(P,Q)M(P,Q).
$$
Consequently, using Lemma \ref{supercase}, there exists $\delta_2>0$ (depending only on $\delta_1$) such that
$$
	K(u(t),w(t))M(u_0,w_0)>(1+\delta_2)K(P,Q)M(P,Q).
$$
 Thus, from Corollary \ref{corviri}, we deduce that
\[
\begin{split}
V''(t)&<24E(u_0,w_0)M(u_0,w_0)-4K(u(t),w(t))M(u_0,w_0)\\
&<12(1-\delta_1)E(P,Q)M(P,Q)-4(1+\delta_2)K(P,Q)M(P,Q)\\
&=4(1-\delta_1)K(P,Q)M(P,Q)-4(1+\delta_2)K(P,Q)M(P,Q)\\
&=-4(\delta_1+\delta_2)K(P,Q)M(P,Q),
\end{split}
\]
where we have used that $K(P,Q)=3E(P,Q)$.
Since the right-hand side of this last inequality is negative, a standard convexity argument allows us to conclude.
\end{proof}

Next, we state some sufficient conditions which imply that the solution blows up either forward or backward in time.

\begin{theorem}
	\label{T47}
	Assume $2\leq n\leq3$, $\sigma=3$ and $\mu>0$. Suppose $u_0,w_0\in\mathbb{H}$
	and let
	$$
	(u,v)\in C((-T_*,T^*); \mathbb{H}\times \mathbb{H} )
	$$
	be the maximal solution of \eqref{nlssystem1} given in Theorem \ref{localtheorem}.
	The following statements hold:\\
	\begin{itemize}
		\item[(i)] If $E(u_0,w_0)<0$ then $T_*<\infty$ and $T^*<\infty$.
		\item[(ii)] If $E(u_0,w_0)=0$  and
		$$
		Im \int \left(\overline{u}_0x\cdot\nabla u_0+3 \overline{w}_0x\cdot\nabla w_0\right)<0,	
		$$
		then $T^*<\infty$.
		\item[(iii)] If $E(u_0,w_0)=0$  and
		$$
		Im \int \left(\overline{u}_0x\cdot\nabla u_0+3 \overline{w}_0x\cdot\nabla w_0\right)>0,	
		$$
		then $T_*<\infty$.
		\item[(iv)] If $E(u_0,w_0)>0$  and
		$$
		Im \int \left(\overline{u}_0x\cdot\nabla u_0+3 \overline{w}_0x\cdot\nabla w_0\right)<-\sqrt{nE(u_0,w_0)M(xu_0,xw_0)}	
		$$
		then $T^*<\infty$.
		\item[(v)] If $E(u_0,w_0)>0$  and
		$$
		Im \int \left(\overline{u}_0x\cdot\nabla u_0+3 \overline{w}_0x\cdot\nabla w_0\right)>\sqrt{nE(u_0,w_0)M(xu_0,xw_0)}	
		$$
		then $T_*<\infty$.
	\end{itemize}
\end{theorem}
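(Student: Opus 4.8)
The plan is to run a virial (convexity) argument built on the second moment
\[
V(t)=\int |x|^2\bigl(|u(t)|^2+3\sigma|w(t)|^2\bigr),
\]
which by Proposition \ref{virialprop} belongs to $C^2([0,T^*))$ and is manifestly nonnegative. The starting point is Corollary \ref{corviri}: since $\sigma=3$,
\[
V''(t)=8nE(u_0,w_0)+4(2-n)K(u(t),w(t))-4n\int\bigl(|u|^2+\mu|w|^2\bigr).
\]
Because $2\le n\le 3$ we have $4(2-n)\le 0$, and since $\mu>0$ the last term is nonpositive; hence the key differential inequality
\[
V''(t)\le 8nE(u_0,w_0),\qquad 0\le t<T^*.
\]
This is exactly where the hypothesis $n\ge 2$ is used: for $n=1$ the gradient term would carry the wrong sign. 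Integrating twice, writing $V_0:=V(0)=M(xu_0,xw_0)$ and recalling from \eqref{vlinha} that $V'(0)=4\,\mathrm{Im}\int(\overline{u}_0 x\cdot\nabla u_0+3\overline{w}_0 x\cdot\nabla w_0)$, one obtains the pointwise upper bound
\[
0\le V(t)\le h(t):=4nE(u_0,w_0)\,t^2+V'(0)\,t+V_0 .
\]

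The whole mechanism is then the Glassey obstruction: if $h$ becomes strictly negative at some finite time, the inequality $V\ge 0$ forces the solution to cease to exist before that time, so the corresponding maximal time ($T^*$ forward, $T_*$ backward) is finite. The five cases merely record the behavior of the parabola $h$ according to the sign of its leading coefficient $4nE$ and of $V'(0)$. For (i) with $E<0$, $h$ opens downward and $h(t)\to-\infty$ as $t\to\pm\infty$, hitting zero at finite positive and finite negative times, so both $T^*<\infty$ and $T_*<\infty$. For (ii) and (iii) with $E=0$, the map $V$ is concave and $h(t)=V_0+V'(0)t$ is affine: when $V'(0)<0$ it tends to $-\infty$ as $t\to+\infty$, giving $T^*<\infty$, and when $V'(0)>0$ it tends to $-\infty$ as $t\to-\infty$, giving $T_*<\infty$.

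The genuinely delicate cases are (iv) and (v), where $E>0$ and $h$ opens upward, so the crude sign argument fails because $h$ stays positive for large $|t|$. Here the sharp thresholds enter through the discriminant. Observing that $M(xu_0,xw_0)=V_0$, the assumption in (iv) reads $V'(0)<-4\sqrt{nE V_0}$, whence $V'(0)^2>16nEV_0$ and the discriminant of $h$ is strictly positive. Thus $h$ has two real roots whose product $V_0/(4nE)$ is positive and whose sum $-V'(0)/(4nE)$ is positive, so both roots are positive and $h$ is strictly negative on the interval between them; since $V$ cannot be negative, $T^*$ must precede the first root, giving $T^*<\infty$. Case (v) is the mirror image: $V'(0)>4\sqrt{nE V_0}$ forces both roots of $h$ to be negative, so $h<0$ on an interval of negative times, whence $T_*<\infty$.

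I expect the only real care to be the bookkeeping of the root signs in (iv)--(v), together with the elementary but essential observation that $M(xu_0,xw_0)=V_0$, which turns the stated thresholds into precisely the discriminant condition $V'(0)^2>16nEV_0$; everything else reduces to the two integrations of $V''\le 8nE$ and the contradiction with $V\ge 0$.
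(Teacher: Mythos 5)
Your proposal is correct and follows exactly the route the paper takes: the paper's proof consists of noting $V''(t)\leq 8nE(u_0,w_0)$ from Corollary \ref{corviri} and then invoking ``the standard convexity method'' with the details deferred to \cite[Section 6.5]{Cazenave}, and your argument simply carries out those omitted details (the quadratic upper bound $V(t)\leq 4nE(u_0,w_0)t^2+V'(0)t+V(0)$, the identification $V'(0)=4\,\mathrm{Im}\int(\overline{u}_0x\cdot\nabla u_0+3\overline{w}_0x\cdot\nabla w_0)$, and the root analysis of the parabola in the five cases) accurately, including the discriminant bookkeeping for (iv)--(v) and the use of the inequality in both time directions.
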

\begin{proof}
	It is clear from Corollary \ref{corviri} that $V''(t)\leq 8nE(u_0,w_0)$. So, the proof follows the standard convexity method and we shall omit the calculations. The interested reader will find the details for the classical Schr\"odinger equation in \cite[Section 6.5]{Cazenave}.
\end{proof}

In the particular case $\mu=9$, the above result can be improved in the following sense:

\begin{theorem}\label{blow2}\label{T48}
	Assume $2\leq n\leq3$, $\sigma=3$ and $\mu=9$. Suppose
	$
	u_0,w_0\in \mathbb{H}
	$
	and let
	$$
	(u,v)\in C((-T_*,T^*); \mathbb{H}\times \mathbb{H} )
	$$
	be the maximal solution of \eqref{nlssystem1} given in Theorem \ref{localtheorem}.
	Then,\\
	\begin{itemize}
		\item[(i)] If $2E(u_0,w_0)<M(u_0,w_0)$ then $T_*<\infty$ and $T^*<\infty$.
		\item[(ii)] If $2E(u_0,w_0)=M(u_0,w_0)$  and
		$$
		Im \int \left(\overline{u}_0x\cdot\nabla u_0+3 \overline{w}_0x\cdot\nabla w_0\right)<0,	
		$$
		then $T^*<\infty$.
		\item[(iii)] If $2E(u_0,w_0)=M(u_0,w_0)$  and
		$$
		Im \int \left(\overline{u}_0x\cdot\nabla u_0+3 \overline{w}_0x\cdot\nabla w_0\right)>0,	
		$$
		then $T_*<\infty$.
		\item[(iv)] If $2E(u_0,w_0)>M(u_0,w_0)$  and
		$$
		\sqrt{2}Im \int \left(\overline{u}_0x\cdot\nabla u_0+3 \overline{w}_0x\cdot\nabla w_0\right)<-\sqrt{n(2E(u_0,w_0)-M(u_0,w_0))M(xu_0,xw_0)}	
		$$
		then $T^*<\infty$.
		\item[(v)] If $2E(u_0,w_0)>M(u_0,w_0)$  and
		$$
		\sqrt{2}Im \int \left(\overline{u}_0x\cdot\nabla u_0+3 \overline{w}_0x\cdot\nabla w_0\right)>\sqrt{n(2E(u_0,w_0)-M(u_0,w_0))M(xu_0,xw_0)}	
		$$
		then $T_*<\infty$.
	\end{itemize}
\end{theorem}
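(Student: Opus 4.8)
The plan is to refine the virial analysis of Corollary \ref{corviri} using the resonance $\mu=9=3\sigma$, and then run the same convexity method that underlies Theorem \ref{T47}, but with a sharper upper bound on $V''$. First I would specialize Corollary \ref{corviri} to the present parameters. Since $\sigma=3$ and $\mu=9$ we have $\mu=3\sigma$, so that
$$
\int\big(|u(t)|^2+\mu|w(t)|^2\big)=\int\big(|u(t)|^2+3\sigma|w(t)|^2\big)=M(u(t),w(t))=M(u_0,w_0),
$$
the last equality being conservation of mass. Substituting into Corollary \ref{corviri} gives
$$
V''(t)=8nE(u_0,w_0)+4(2-n)K(u(t),w(t))-4nM(u_0,w_0).
$$
Because $n\geq 2$ the coefficient $4(2-n)$ is nonpositive and $K\geq 0$, so the gradient term only helps and we obtain the crucial inequality
$$
V''(t)\leq 4n\big(2E(u_0,w_0)-M(u_0,w_0)\big).
$$
This is the exact analogue of the bound $V''\leq 8nE$ used in Theorem \ref{T47}, with $8nE$ replaced by the smaller quantity $4n(2E-M)$; this gain is precisely what turns the pure energy threshold of Theorem \ref{T47} into the mass--energy threshold stated here.

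Next I would record the two identities linking $V$ to the data: $V(0)=M(xu_0,xw_0)$ and, by \eqref{vlinha}, $V'(0)=4\,\mathrm{Im}\int(\overline{u}_0 x\cdot\nabla u_0+3\overline{w}_0 x\cdot\nabla w_0)$. With these the five cases follow from the standard convexity argument applied to the comparison parabola $\phi(t)=V(0)+V'(0)t+2n(2E-M)t^2$, which dominates $V$ for $t\geq 0$, together with $V(t)\geq 0$. In case (i), $2E<M$ forces $V''\leq$ a strictly negative constant, so the upper bound for $V$ eventually becomes negative in both time directions, contradicting $V\geq 0$ and forcing $T_*,T^*<\infty$. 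In cases (ii)--(iii), $2E=M$ gives $V''\leq 0$, so $V$ is concave and the sign of $V'(0)$ decides whether the graph is driven to zero forward or backward in time. In cases (iv)--(v), $2E>M$ and $\phi$ opens upward, so one must check that the hypotheses are exactly the discriminant condition for $\phi$ to possess a positive (resp. negative) real root.

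The one point demanding care---and what I regard as the main bookkeeping obstacle---is matching the numerical constants in (iv) and (v) to this discriminant condition. Writing $A=4n(2E-M)$, the parabola $\phi$ has a positive root precisely when $V'(0)<-\sqrt{2AV(0)}$; substituting $V'(0)=4\,\mathrm{Im}\int(\cdots)$ and $V(0)=M(xu_0,xw_0)$ and using $\tfrac14\sqrt{8}=1/\sqrt{2}$ gives
$$
\sqrt{2}\,\mathrm{Im}\int\big(\overline{u}_0 x\cdot\nabla u_0+3\overline{w}_0 x\cdot\nabla w_0\big)<-\sqrt{n\big(2E(u_0,w_0)-M(u_0,w_0)\big)M(xu_0,xw_0)},
$$
which is exactly hypothesis (iv); case (v) is its time-reversed counterpart, obtained by applying the same argument to $t\mapsto V(-t)$, whose first derivative at $0$ is $-V'(0)$. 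Since $V\in C^2$ on the maximal interval and $V\geq 0$ throughout, in each case the comparison parabola reaching zero at a finite time $t_0$ forces the maximal existence time to satisfy $T^*\leq t_0$ (resp. $T_*\leq|t_0|$), yielding the claimed finite-time blow-up. For the detailed convexity computations I would refer to \cite[Section 6.5]{Cazenave}, exactly as in the proof of Theorem \ref{T47}.
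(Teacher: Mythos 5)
Your proposal is correct and follows essentially the same route as the paper: the paper's proof consists precisely of observing that, since $\mu=3\sigma$, the last integral in Corollary \ref{corviri} equals the conserved mass $M(u_0,w_0)$, so that $V''(t)\leq 4n\left(2E(u_0,w_0)-M(u_0,w_0)\right)$, and then invoking the standard convexity method as in Theorem \ref{T47}. Your constant-matching for cases (iv)--(v) via the discriminant condition $V'(0)<-\sqrt{2AV(0)}$ with $A=4n(2E-M)$ is exactly the bookkeeping the paper leaves implicit, and it checks out.
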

\begin{proof}
	In this case, the last integral in \ref{corviri} becomes $M(u_0,w_0)$. Hence,  $V''(t)\leq 4n(2E(u_0,w_0)-M(u_0,w_0))$.
\end{proof}

\begin{remark}
Under the assumption  $2E(u_0,w_0)<M(u_0,w_0)$ (and $\sigma=3$, $\mu=9$) a simple calculation using the definition of the energy and Lemma \ref{gnbest} shows that 
$$
K(u_0,w_0)^{n-2}M(u_0,w_0)^{4-n}>\frac{n^n}{4(4-n)^{n-2}}M(P,Q)^2.
$$
Hence, for $n=2$, we obtain $M(u_0,w_0)>M(P,Q)$, which does not contradict Theorem \ref{sharpn=2}. On the other hand, for $n=3$, using that $K(P,Q)=3M(P,Q)$,
$$
K(u_0,w_0)M(u_0,w_0)>\frac{9}{4}K(P,Q)M(P,Q),
$$
which implies that \eqref{absass4} holds.
\end{remark}

\section{Stability/Instability of ground states}

This section is devoted to study the (orbital) stability/instability of the standing waves \eqref{standing}  in some particular cases.
Let $(P,Q)$ be a real ground state of \eqref{standindsys}. In particular $Q\neq0$ and $(P,Q)$ must satisfy
\begin{equation}\label{standindsys1}
\begin{cases}
\Delta P-(\omega+1)P+(\frac{1}{9}P^2+2Q^2)P+\frac{1}{3}{P}^2Q=0,\\
\Delta Q-(\mu+3\sigma\omega) Q+(9Q^2+2P^2)Q+\frac{1}{9}P^3=0.
\end{cases}
\end{equation}

To start with, let us make clear our notion of stability and instability. Recall that \eqref{nlssystem} is invariant by translations and rotations, that is, if $(u,w)$ is a solution of \eqref{nlssystem} so are $(u(\cdot+y)w(\cdot+y))$ and $(e^{i\theta} u,e^{3i\theta}w)$, for any $\theta\in\R$ and $y\in\R^n$. Thus, the orbit generated by $(P,Q)$ is defined by
$$
\Omega=\{ (e^{i\theta}u(\cdot+y), e^{3i\theta}u(\cdot+y)):\;\; \theta\in\R,y\in\R^n \}.
$$

\begin{definition}[Orbital stability]\label{definstability}
We say that a standing wave $(e^{i\omega t}P,e^{3i\omega t}Q)$ is  orbitally stable by the flow of \eqref{nlssystem} if for any $\epsilon>0$ there exists a $\delta>0$ with the following property: if $(u_0,w_0)\in H^1\times H^1$ satisfies $\|(u_0,w_0)-(P,Q)\|_{H^1\times H^1}<\delta$ then the solution of \eqref{nlssystem}, with initial data $(u_0,w_0)$ is global and satisfies 
$$
\sup_{t\in\R}\inf_{(\theta,y)\in \R\times \R^n}\|(u(t),w(t))- (e^{i\theta}u(\cdot+y), e^{3i\theta}u(\cdot+y))\|_{H^1\times H^1}<\epsilon.
$$
Otherwise, we say that $(e^{i\omega t}P,e^{3i\omega t}Q)$ is  orbitally unstable by the flow of \eqref{nlssystem}.
\end{definition}

Roughly speaking, this means that there exists an $\epsilon$-neighborhood of $\Omega$ such that any solution of \eqref{nlssystem} starting in this neighborhood remains close to the orbit generated by $(P,Q)$. As usual in the current literature we say that $(P,Q)$ is orbitally stable (unstable) instead of saying that $(e^{i\omega t}P,e^{3i\omega t}Q)$ is  orbitally stable (unstable).

\subsection{Instability}

In order to establish our main theorem concerning instability let us introduce
\begin{equation}\label{manifold}
\Sigma:=\left\{(u,w)\in H^1(\R^n)\times H^1(\R^n):\;M(u,w)=M(P,Q)\right\}.
\end{equation}

Recall the following criterion for instability.

\begin{theorem}[Instability Criterion for ground states]\label{instbilitycrit}
Assume there exists $\Psi\in H^1(\R^n)\times H^1(\R^n)$ satisfying
\begin{itemize}
	\item[(i)] $\Psi$ belongs to the tangent spate $T_{(P,Q)}\Sigma$;
	\item[(ii)] $\Lambda^{-1}\Psi$ is $L^2$-orthogonal to $i(P,3 Q)$ and $\partial_{x_j}(P,Q)$, $j=1,\ldots,n$;
	\item[(iii)] $i(P,3 Q)$ and $\partial_{x_j}(P,Q)$, $j=1,\ldots,n$ are linearly independent;
	\item[(iv)] $\langle S''(P,Q)\Psi,\Psi\rangle<0$, where $S=E+\omega M$.
\end{itemize}
Then, $(P,Q)$ is orbitally unstable by the flow of \eqref{nlssystem}.
\end{theorem}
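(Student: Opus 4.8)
The plan is to argue by contradiction through a Lyapunov/convexity scheme of Grillakis--Shatah--Strauss type, playing the boundedness of a well-chosen linear functional against the sign-definiteness of its time derivative. Suppose $(P,Q)$ were orbitally stable. Then there is $\epsilon>0$ so that every solution issued from data within some $\delta$ of $(P,Q)$ remains, for all time, in the tube $\mathcal{T}_\epsilon=\{X:\inf_{g}\|X-T_g(P,Q)\|_{H^1\times H^1}<\epsilon\}$ around the orbit $\Omega$, where $T_g$ denotes the action of the phase/translation symmetry group. Since $\Omega$ is bounded in $H^1\times H^1$, any solution trapped in $\mathcal{T}_\epsilon$ is bounded in $L^2\times L^2$ uniformly in $t$.

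First I would introduce the functional $A(X)=\langle \Lambda^{-1}\Psi,X\rangle_{L^2}$. Because $\Lambda^{-1}=\mathrm{diag}(i,i\sigma)$ is bounded, $A$ is a bounded linear functional on $L^2\times L^2$, so $t\mapsto A(X(t))$ stays bounded as long as $X(t)$ remains in $\mathcal{T}_\epsilon$. Differentiating along the flow $\frac{d}{dt}X=\Lambda E'(X)$ and using the skew-adjointness of $\Lambda$ yields the key identity
$$\frac{d}{dt}A(X(t))=\langle \Lambda^{-1}\Psi,\Lambda E'(X)\rangle=-\langle \Psi,E'(X(t))\rangle.$$
At the standing wave this vanishes: since $S'(P,Q)=0$ the gradient $E'(P,Q)$ is proportional to $M'(P,Q)$, so condition (i), i.e. $\Psi\in T_{(P,Q)}\Sigma$ and hence $\langle \Psi,M'(P,Q)\rangle=0$, gives $\langle\Psi,E'(P,Q)\rangle=0$. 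Thus the leading behaviour of $A'$ is quadratic and governed by $S''(P,Q)$.

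Next I would choose the initial data. Using (i) and the strict negativity (iv), I would build a curve $s\mapsto\gamma(s)$ in $\Sigma$ with $\gamma(0)=(P,Q)$ and $\gamma'(0)=\Psi$; the expansion $S(\gamma(s))=S(P,Q)+\tfrac{s^2}{2}\langle S''(P,Q)\Psi,\Psi\rangle+o(s^2)$ then gives $S(\gamma(s))<S(P,Q)$ while $M(\gamma(s))=M(P,Q)$ for small $s>0$. Taking such $\gamma(s)$ as data and using conservation of $E$ and $M$ (hence of $S$), the solution stays for all time on the level set $\{S=S(\gamma(s))<S(P,Q)\}\cap\Sigma$. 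The core of the argument is then to show that, along any solution confined to $\mathcal{T}_\epsilon$ on this level set, the derivative $-\langle\Psi,E'(X(t))\rangle$ stays bounded away from zero with a fixed sign. This is handled by modulation: for $X\in\mathcal{T}_\epsilon$ one writes $X=T_{g}\big((P,Q)+v\big)$ with $v$ small and $L^2$-orthogonal to the generators $i(P,3Q)$ and $\partial_{x_j}(P,Q)$. Condition (iii) guarantees these generators are independent, so the decomposition is well-defined and smooth, and the equivariance of $E$ reduces $\langle\Psi,E'(X)\rangle$ to $\langle T_g^{-1}\Psi,E''(P,Q)v\rangle+O(\|v\|^2)$. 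Condition (ii), that $\Lambda^{-1}\Psi$ is orthogonal to $i(P,3Q)$ and $\partial_{x_j}(P,Q)$, is exactly what makes the degenerate (zero-mode) contributions drop out and renders $A$ stationary to first order along $\Omega$; the conserved deficit $S(X)-S(P,Q)<0$, together with the spectral structure of $S''$ (one negative direction $\Psi$, kernel spanned by the generators, positive otherwise), then forces the transverse part $v$ to carry a component along $\Psi$ of a fixed sign, pinning the sign of $A'$.

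The contradiction is then immediate: $A(X(t))$ would be a bounded function whose derivative is bounded away from zero, hence unbounded. I expect the main obstacle to be precisely this coercivity step — establishing a uniform lower bound $|\langle\Psi,E'(X(t))\rangle|\ge c>0$ on the tube. This requires (a) a quantitative spectral estimate for the constrained Hessian $S''(P,Q)$ on the orthogonal complement of $\mathrm{span}\{\Psi\}$ and the zero modes, and (b) careful control of the nonlinear and modulation remainders $O(\|v\|^2)$ so that the negative quadratic term dominates uniformly in $t$; here conditions (ii)--(iii) are the structural ingredients that keep this bookkeeping closed. The remaining ingredients (the skew-adjoint computation, the Taylor expansion of $S$, and the construction of $\gamma$) are routine.
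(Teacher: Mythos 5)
The paper offers no proof of this criterion at all --- its ``proof'' is a citation of \cite{Gon} and \cite{corcho} --- so your attempt must be measured against the Gon\c{c}alves Ribeiro argument, whose overall skeleton (a bounded auxiliary functional with monotone growth along the flow, initial data on a curve in $\Sigma$ lowering $S$, modulation inside the stability tube) you do reproduce. But two of your steps fail as written. First, with $\Psi$ \emph{fixed}, $A(X)=\langle\Lambda^{-1}\Psi,X\rangle_{L^2}$ is not the right functional. Conditions (i)--(ii) are orthogonality relations at the single point $(P,Q)$, not along the orbit. Writing $T_{\theta,y}$ for the phase--translation action, equivariance gives $E'(T_{\theta,y}(P,Q))=T_{\theta,y}E'(P,Q)$ with $E'(P,Q)=-\frac{\omega}{2}M'(P,Q)=-\omega\,(P,3\sigma Q)$; hence along a trajectory that merely drifts in $(\theta(t),y(t))$ one gets $\frac{d}{dt}A=-\langle\Psi,E'(X(t))\rangle\approx \omega\,\langle\Psi,T_{\theta(t),y(t)}(P,3\sigma Q)\rangle$, a zeroth-order oscillating term with no definite sign, which vanishes only at $g=e$ (that is condition (i)) and is generically nonzero for $g\neq e$ when $\omega\neq0$. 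Your reduction of $\langle\Psi,E'(X)\rangle$ to $\langle T_g^{-1}\Psi,E''(P,Q)v\rangle+O(\|v\|^2)$ silently drops exactly this contribution. The standard fix --- and the place where (ii)--(iii) genuinely act --- is to transport $\Psi$ equivariantly: extend it to a vector field $B$ with $B(T_g(P,Q))=T_g\Psi$, using the modulation parameters $g(X)$ (well defined thanks to (iii)), and take $A(X)=\langle\Lambda^{-1}B(X),X\rangle_{L^2}$; condition (ii) is then what cancels the extra terms produced by the $t$-dependence of $g(X(t))$.

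Second, and more seriously, your coercivity step invokes ``the spectral structure of $S''$ (one negative direction $\Psi$, kernel spanned by the generators, positive otherwise)''. That is \emph{not} among the hypotheses (i)--(iv), and the criterion is formulated precisely so as not to need it: for the present system no such description of $S''(P,Q)$ at a genuine two-component ground state is available (the paper's Theorem \ref{theorem4.1} gives spectral information only for the semitrivial state $(0,Q)$, and only in the stability part). Without positivity of $S''(P,Q)$ on the complement of $\mathrm{span}\{\Psi\}$ and the zero modes, the conserved deficit $S(X)-S(P,Q)<0$ does not force the transverse perturbation $v$ to carry a signed component along $\Psi$, so your sign-pinning step does not close. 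In \cite{Gon} the uniform lower bound is obtained from (iv) alone: by continuity, $s\mapsto S(\Phi_s(X))$ (where $\Phi_s$ is the flow of $B$) is uniformly strictly concave for $X$ in a thin tube around the orbit; combining this concavity with $S'(P,Q)=0$, the conservation of $S$ and $M$, and a connectedness argument that confines the trajectory to one side of the crest $\{X:\langle S'(X),B(X)\rangle=0\}$, one gets $\langle S'(X(t)),B(X(t))\rangle\ge c\,\bigl(S(P,Q)-S(X_0)\bigr)>0$ for all $t$ in the existence interval, with no spectral input whatsoever. Replacing your spectral-coercivity step by this concavity mechanism (and fixing the auxiliary functional as above) is what makes the theorem true under (i)--(iv) as stated; the remaining ingredients of your sketch (the curve $\gamma$, the Taylor expansion, the boundedness of $A$ on the tube) are indeed routine.
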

\begin{proof}
See \cite{Gon} and \cite{corcho}.
\end{proof}

We are now in position of proving the following result:

\begin{theorem}\label{instateo}
	Assume either $n=3$ and $\mu>0$ or $n=2$ and $\mu\neq3\sigma$. Let  $(P,Q)$ be a ground state. Then, the standing wave  $(e^{i\omega t}P,e^{3i\omega t}Q)$ is orbitally unstable by the flow of \eqref{nlssystem}.
\end{theorem}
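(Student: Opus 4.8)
The plan is to verify the hypotheses of the instability criterion, Theorem \ref{instbilitycrit}, for a well-chosen $\Psi$. The only condition with real content will be (iv): for every real-valued ``scaling-type'' direction $\Psi=(\psi_1,\psi_2)$ that I use, conditions (i)--(iii) come for free. Indeed, since $\Lambda^{-1}=\mathrm{diag}(i,i\sigma)$, the vector $\Lambda^{-1}\Psi=(i\psi_1,i\sigma\psi_2)$ is purely imaginary, so its real $L^2$-pairing with the real vectors $\partial_{x_j}(P,Q)$ vanishes, while its pairing with $i(P,3Q)$ reduces to $\int(\psi_1P+3\sigma\psi_2Q)$; this is exactly the tangency condition defining $T_{(P,Q)}\Sigma$, so (i) and (ii) hold simultaneously once this integral is zero. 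Condition (iii) holds because $Q\not\equiv0$ and the $\partial_{x_j}(P,Q)$ are independent for a radial profile. Thus everything reduces to producing $\Psi\in T_{(P,Q)}\Sigma$ with $\langle S''(P,Q)\Psi,\Psi\rangle<0$.

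For $n=3$ I would use the mass-preserving dilation. Set $(P^\lambda,Q^\lambda)=(\lambda^{n/2}P(\lambda\cdot),\lambda^{n/2}Q(\lambda\cdot))$ and $g(\lambda)=S(P^\lambda,Q^\lambda)$. Since $M(P^\lambda,Q^\lambda)\equiv M(P,Q)$, the generator $\Psi=\frac{d}{d\lambda}\big|_{\lambda=1}(P^\lambda,Q^\lambda)=(\tfrac n2P+x\cdot\nabla P,\tfrac n2Q+x\cdot\nabla Q)$ lies in $T_{(P,Q)}\Sigma$, and as $(P,Q)$ is a critical point of $S$ we get $\langle S''(P,Q)\Psi,\Psi\rangle=g''(1)$. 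A scaling computation gives $g(\lambda)=\tfrac{\lambda^2}{2}K(P,Q)+\tfrac12L-\lambda^nN(P,Q)$ with $L=\int((\omega+1)P^2+(\mu+3\sigma\omega)Q^2)$; then $g'(1)=0$ forces $K(P,Q)=nN(P,Q)$, whence $g''(1)=n(2-n)N(P,Q)$. Because $N(P,Q)>0$ for a ground state (by \eqref{poha7}), for $n=3$ this equals $-3N(P,Q)<0$, settling $n=3$ for every $\mu>0$.

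For $n=2$ the same dilation gives $g''(1)=0$, the $L^2$-critical degeneracy, so I would instead use the two-parameter family of \emph{independent} component dilations $\Phi_{\lambda,\eta}=(\lambda P(\lambda\cdot),\eta Q(\eta\cdot))$, which stays in $\Sigma$ since each component preserves its own $L^2$-norm and hence the total mass. With $h(\lambda,\eta)=S(\Phi_{\lambda,\eta})$ and $H$ its Hessian at $(1,1)$, the bilinear form $\langle S''(P,Q)\cdot,\cdot\rangle$ restricted to $\mathrm{span}\{\partial_\lambda\Phi,\partial_\eta\Phi\}$ is exactly $H$. Along the diagonal $\lambda=\eta$ one recovers the simultaneous dilation, on which $h$ is constant, so $H_{\lambda\lambda}+2H_{\lambda\eta}+H_{\eta\eta}=0$; eliminating $H_{\lambda\eta}$ yields $\det H=-\tfrac14(H_{\lambda\lambda}-H_{\eta\eta})^2\le0$. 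Hence $H$ has a strictly negative direction precisely when $H_{\lambda\lambda}\ne H_{\eta\eta}$, and I take $\Psi$ to be the associated eigenvector. Using $S'(P,Q)=0$, the diagonal entries are the single-component second variations $H_{\lambda\lambda}=\langle S_{uu}(P,Q)(P+x\cdot\nabla P),\,P+x\cdot\nabla P\rangle$ and $H_{\eta\eta}=\langle S_{ww}(P,Q)(Q+x\cdot\nabla Q),\,Q+x\cdot\nabla Q\rangle$.

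The main obstacle is the explicit evaluation of $H_{\lambda\lambda}-H_{\eta\eta}$ for $n=2$: one must expand these quadratic forms, integrate by parts, and substitute the Pohozaev and energy identities of Lemma \ref{pohojaevlemma} to show $H_{\lambda\lambda}\ne H_{\eta\eta}$ precisely when $\mu\ne3\sigma$, the vanishing at resonance $\mu=3\sigma$ being the infinitesimal shadow of the pseudo-conformal invariance exploited in Theorem \ref{sharpn=2}. A secondary technical point is the degenerate ground state $(0,Q)$, for which $\partial_\lambda\Phi\equiv0$ and the two-parameter family collapses; there I would argue directly that $\{u\equiv0\}$ is invariant for \eqref{nlssystem}, on which $w$ solves the $L^2$-critical scalar equation \eqref{nlsg} whose ground-state standing wave is unstable, and transfer the instability to the system. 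Assembling both regimes and recording the (automatic) checks of (i)--(iii) for the chosen $\Psi$ completes the proof.
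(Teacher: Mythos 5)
Your reduction of (i)--(iii) to trivialities is correct and matches the paper, and your $n=3$ argument is complete and is in fact exactly the paper's: the paper's curve $\Gamma(t)=(\gamma\lambda^{n/2}P(\lambda\cdot),\alpha\lambda^{n/2}Q(\lambda\cdot))$ evaluated at $(\alpha_0,\lambda_0)=(0,1)$ is precisely your mass-preserving dilation, and your value $g''(1)=n(2-n)N(P,Q)=-3N(P,Q)$ agrees with the paper's $-\int(|\nabla P|^2+|\nabla Q|^2)$ via $K(P,Q)=nN(P,Q)$, which follows from \eqref{poha7} and \eqref{poha3}. The gap is in the $n=2$ case, and it is not merely an unexecuted computation. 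Your two-parameter family $(\lambda P(\lambda\cdot),\eta Q(\eta\cdot))$ preserves \emph{each component's} $L^2$-norm separately, so along the whole family both $M$ and $\int\left((\omega+1)|u|^2+(\mu+3\sigma\omega)|w|^2\right)$ are frozen; these are the only places in $S$ where $\mu$ and $3\sigma$ enter with different weights. Consequently $\mu$ and $\sigma$ appear in your Hessian $H$ only implicitly through the fixed profile $(P,Q)$, and there is no visible mechanism producing a factor $(3\sigma-\mu)$. The paper's family works for the opposite reason: it uses a \emph{common} dilation $\lambda$ but trades amplitude between the two components under the total-mass constraint $\gamma^2k+\alpha^2=k+1$, and it is exactly this mass exchange against the mismatch of the energy weights $(1,\mu)$ versus the mass weights $(1,3\sigma)$ that yields the cross term $B_0=2(3\sigma-\mu)\int Q^2$, whence the discriminant $-B_0^2<0$ for $\mu\neq3\sigma$ (the pure-dilation entry $C_0$ vanishing at $n=2$ is the criticality degeneracy, as you say). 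So your assertion that $H_{\lambda\lambda}\neq H_{\eta\eta}$ ``precisely when $\mu\neq3\sigma$'' is an unsupported conjecture standing in for the entire content of the theorem at $n=2$.

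Indeed there is concrete evidence against the dichotomy you posit. When the ground state is semitrivial, $(0,Q)$ --- which by Proposition \ref{carground} occurs whenever $\omega+1=\mu+3\sigma\omega$, a condition compatible with $\mu\neq3\sigma$ for suitable $\omega$ --- one has $\Psi_1=(P+x\cdot\nabla P,0)\equiv 0$, and $H_{\eta\eta}=\langle S''\Psi_2,\Psi_2\rangle$ is the scalar $L^2$-critical second variation along its own scaling direction, which vanishes; hence $H_{\lambda\lambda}=H_{\eta\eta}=0$ even though $\mu\neq3\sigma$. Your fallback for this case (invariance of $\{u\equiv0\}$ plus strong instability of the $2$D cubic NLS standing wave, transferred to the system) is a reasonable patch --- and, notably, the paper's own formulas for $A(t)$, $B(t)$ contain $1/k$ with $k=\int P^2/3\sigma\int Q^2$, so the paper implicitly assumes $P\not\equiv0$ and your patch addresses something the paper glosses over --- but it also demonstrates that for your family the nonvanishing of $H_{\lambda\lambda}-H_{\eta\eta}$ is not governed by $\mu-3\sigma$. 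To repair the $n=2$ argument you should either carry out the paper's amplitude-exchange variation (or add an amplitude parameter to your family, constrained only by total mass), or actually compute $H_{\lambda\lambda}-H_{\eta\eta}$ from the elliptic system and exhibit its sign or nonvanishing; as written, the $n=2$ half of the theorem is not proved.
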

\begin{proof}
We will check the assumptions in Theorem \ref{instbilitycrit}. To do so, let us introduce the smooth curve
$$
\Gamma(t)=\left(\gamma(t)\lambda^{\frac{n}{2}}(t)P(\lambda(t)\cdot), \alpha(t)\lambda^{\frac{n}{2}}(t)Q(\lambda(t)\cdot)\right),
$$
where $\alpha,\gamma$, and $\lambda$ are smooth functions to be choosing later satisfying,
\begin{eqnarray}\label{initial-con}
\alpha(0)=\gamma(0)=\lambda(0)=1.
\end{eqnarray}
In particular we have $\Gamma(0)=(P,Q)$.  Define the real number $k$ by
$$
k:=\dfrac{\int P^2}{3\sigma\int Q^2}.
$$
The assumption that $\Gamma(t)\subset\Sigma$ is equivalent to
\begin{equation}\label{krelation}
\gamma^2k+\alpha^2=k+1.
\end{equation}
So, from now on we will assume that \eqref{krelation} holds; so that once we choose the function
 $\alpha$, $\gamma$ is completely determined. By defining 
\begin{equation}
\Psi=\Gamma'(0)
\end{equation}
we promptly see that $\Psi\in T_{(P,Q)}\Sigma$; and condition (i) in Theorem \ref{instbilitycrit} holds.

Next we recall that
$$
\Lambda^{-1}=\left(
\begin{array}{cc}
i & 0\\
0 & \sigma i
\end{array}
\right).
$$
Hence $\Lambda^{-1}\Psi$ has complex components. This immediately implies that  $\Lambda^{-1}\Psi$ is orthogonal to $\partial_{x_j}(P,Q)$, $j=1,\ldots,n$. On the other hand, if $\Psi=(\Psi_1,\Psi_2)$, we have
$$
\Lambda^{-1}\Psi \perp i(P,3Q)\Leftrightarrow (\Psi_1,\sigma\Psi_2)\perp (P,3Q) \Leftrightarrow  (\Psi_1,\Psi_2)\perp (P,3\sigma Q)\Leftrightarrow \Psi \perp \nabla M(P,Q).
$$
Since $M(\Gamma(t))=M(P,Q)$, by taking the derivative with respect to $t$ and evaluating at $t=0$, it is clear that $\Psi \perp \nabla M(P,Q)$ and assumption (ii) in Theorem \ref{instbilitycrit} is checked.

Note that in (iii) there is nothing to check. So it remains to check (iv). To do so, first recall that
$S(\Gamma(t))=E(\Gamma(t))+\frac{\omega}{2} M(P,Q)$, because $\Gamma(t)\subset \Sigma$. Thus,
$$
\frac{d^2}{dt^2}E(\Gamma(t))=\frac{d^2}{dt^2}S(\Gamma(t))=\langle S''(\Gamma(t))\Gamma'(t),\Gamma'(t)\rangle+ \langle S'(\Gamma(t)),\Gamma''(t)\rangle.
$$
Evaluating at $t=0$ and using that $S'(P,Q)=0$, we see that (iv) is equivalent to 
\begin{equation}\label{Eder}
\frac{d^2}{dt^2}E(\Gamma(t))\Big|_{t=0}<0.
\end{equation}
Hence our task is to prove that we can choose $\alpha$ and $\lambda$ such that \eqref{Eder} holds. But, by using \eqref{krelation}, a simple calculation reveals that
$$
\frac{d}{dt}E(\Gamma(t))=\alpha'(t)A(t)+\lambda'(t)B(t)
$$
where
\[
\begin{split}
A(t)&=\int\left(-\frac{\alpha \lambda^2}{k}|\nabla P|^2+\alpha\lambda^2|\nabla Q|^2+\frac{1}{9k^2}(k+1-\alpha^2)\alpha\lambda^nP^4-9\alpha^3\lambda^nQ^4 \right)\\
&\quad +\int\left(\frac{2}{k}\alpha^3\lambda^nP^2Q^2-\frac{2}{k}(k+1-\alpha^2)\alpha\lambda^nP^2Q^2-\frac{\alpha}{k}P^2+\mu\alpha Q^2\right)\\
&\quad +\int\left( \frac{1}{3k^{3/2}}(k+1-\alpha^2)^{1/2}P^3Q-\frac{1}{9k^{3/2}}(k+1-\alpha^2)^{3/2}\lambda^nP^3Q  \right)
\end{split}
\]
and
\[
\begin{split}
B(t)&=\int\left(\frac{1}{k}(k+1-\alpha^2)\lambda|\nabla P|^2+\alpha^2\lambda|\nabla Q|^2-\frac{n}{36k^2}(k+1-\alpha^2)^2\lambda^{n-1}P^4-\frac{9n}{4}\alpha^4\lambda^{n-1}Q^4 \right)\\
&\quad +\int\left(-\frac{n}{k}(k+1-\alpha^2)\alpha^2\lambda^{n-1}P^2Q^2-\frac{n}{9k^{3/2}}(k+1-\alpha^2)^{3/2}\alpha\lambda^{n-1}P^3Q\right) 
\end{split}
\]
In view of \eqref{poha7} and \eqref{poha3}, we have
\[
\begin{split}
B(0)&=\int(|\nabla P|^2+|\nabla Q|^2)-\frac{n}{4}\int\left( \frac{1}{9}P^4+9Q^4+4P^2Q^2+\frac{4}{9}P^3Q \right)\\
&=-\frac{1}{2}\left((n-4)\int \left(|\nabla P|^2+|\nabla Q|^2\right) +n(\omega+1)\int P^2+n(\mu+3\sigma\omega)\int Q^2\right)\\
&=0.
\end{split}
\]
Also, in view of \eqref{poha2} and \eqref{poha1},
\[
\begin{split}
A(0)&=\int\left(-\frac{1}{k}|\nabla P|^2+|\nabla Q|^2+ \frac{1}{9k}P^4-9Q^4+\left(\frac{2}{k}-2\right)P^2Q^2-\frac{1}{k}P^2+\mu Q^2\right)\\
&\quad +\frac{1}{3k}\int P^3Q-\frac{1}{9}\int P^3Q \\
&=\int\left(-\frac{1}{k}|\nabla P|^2+|\nabla Q|^2+ \frac{1}{9k}P^4-9Q^4+\left(\frac{2}{k}-2\right)P^2Q^2-\frac{1}{k}P^2+\mu Q^2\right)\\
&\quad +\frac{1}{3k}\int P^3Q-\left(\int(|\nabla Q|^2+(\mu+3\sigma\omega)Q^2-9Q^4-2P^2Q^2)\right) \\
&=\frac{1}{k}\left(\int \left(-|\nabla P|^2-(\omega+1)P^2+\frac{1}{9}P^4+2P^2Q^2+\frac{1}{3}P^3Q \right)\right)\\
&=0.
\end{split}
\]
Therefore, by denoting $\alpha_0=\alpha'(0)$ and $\lambda_0=\lambda'(0)$, we deduce, after some calculations using Lemma \ref{pohojaevlemma},
\[
\begin{split}
&\frac{d^2}{dt^2}E(\Gamma(t))\Big|_{t=0}=\alpha_0A'(0)+\lambda_0B'(0)\\
&=\alpha_0^2\left[ \int\left( -\frac{2}{k^2}P^4+\frac{8}{k}P^2Q^2-18Q^4+\left(\frac{2}{3k}+\frac{1}{9}-\frac{1}{3k^2}\right)P^3Q \right)\right]\\
&\quad +2\alpha_0\lambda_0\left[ 2(3\sigma-\mu)\int Q^2+(n-2)\int\left(\frac{1}{9k}P^4-9Q^4+\left(\frac{2}{k}-2\right)P^2Q^2 +\left(\frac{1}{3k}-\frac{1}{9}\right)P^3Q \right) \right]\\
& \quad +\lambda_0^2\frac{n(2-n)}{4}\int\left(\frac{1}{9}P^4+9Q^4+4P^2Q^2+\frac{4}{9}P^3Q \right)\\
&\equiv A_0\alpha_0^2+2B_0\alpha_0\lambda_0+C_0\lambda_0^2.
\end{split}
\]
In particular, the second derivative of $E(\Gamma(t))$ at $t=0$ can be identified as a quadratic form associated with a symmetric matrix. Hence, it suffices to show that this quadratic form assume negative values. 

Assume first $n=2$. Then, it suffices to show that the discriminant
$$
D=A_0C_0-B_0^2=-\left(2(3\sigma-\mu)\int Q^2\right)^2
$$
is negative. But this statement is true provided $\mu\neq 3\sigma$.

Assume now $n=3$. By taking $(\alpha_0,\lambda_0)=(0,1)$ and using \eqref{poha7} and \eqref{poha3}, we obtain
\begin{equation}\label{Eder1}
\begin{split}
\frac{d^2}{dt^2}E(\Gamma(t))\Big|_{t=0}&=-\frac{3}{4}\int\left(\frac{1}{9}P^4+9Q^4+4P^2Q^2+\frac{4}{9}P^3Q \right)\\
&=-\int(|\nabla P|^2+|\nabla Q|^2),
\end{split}
\end{equation}
from which we deduce \eqref{Eder}. The proof of Theorem \ref{instateo} is thus completed.
\end{proof}

\subsection{Stability}

In this last section we study the orbital stability of the ground state given in Proposition \ref{carground}. First of all, we shall rewrite \eqref{nlssystem} as a real pseudo-Hamiltonian system in the form
\begin{equation*}
\frac{\partial X}{\partial t}(t)=\Lambda E'(X(t)),
\end{equation*}
where we have written $u=u_1+iu_2$, $w=w_1+iw_2$, $X=(u_1,w_1,u_1,u_2)$, $\Lambda$ is
the skew-symmetric linear operator defined by
\begin{equation}   \label{H}
\Lambda= \left( \begin{array}{cccc} 0 & 0 & 1 & 0    \\
0 & 0 & 0 & 1/\sigma   \\
-1 & 0 & 0 & 0    \\
0 & -1/\sigma & 0 & 0
\end{array}
\right)
\end{equation}
and $E$ is the energy function now given as
\begin{equation} \label{4.1}
\begin{split}    
E(u_1,u_w,u_1,w_2)&= \frac{1}{2}\int
\Big\{	|\nabla u_1|^2+|\nabla u_2|^2+|\nabla w_1|^2+|\nabla w_2|^2+u_1^2+u_2^2+\mu (w_1^2+w_2^2) 
  \\
& \quad\quad -\frac{1}{18}(u_1^4+2u_1^2u_2^2+u_2^4)-\frac{9}{2}(w_1^4+2w_1^2w_2^2+w_2^4) 
\\
& \quad \quad-2(u_1^2+u_2^2)(w_1^2+w_2^2)-\frac{2}{9}(u_1^3w_1+3u_1^2u_2w_2-3u_1u_2^2w_1-u_2^3w_2
\Big\}dx. 
\end{split}
\end{equation}

Our main theorem here reads as follows.

\begin{theorem}\label{teoesta}
Assume $n=1$ and $\omega+1=\mu+3\sigma\omega$. Let $(0,Q)$ be a ground state of \eqref{standindsys} according to Proposition \eqref{carground}. Then $(0,e^{3i\omega t}Q)$ is orbitally stable by the flow of \eqref{nlssystem}.
\end{theorem}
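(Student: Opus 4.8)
The plan is to verify the hypotheses of the classical Grillakis--Shatah--Strauss (GSS) stability theorem, in the spirit of the instability criterion used in Theorem \ref{instateo}. Since global well-posedness in $n=1$ is already guaranteed by Corollary \ref{cor32}, there is no obstruction coming from blow-up, and the whole matter reduces to two ingredients: a spectral condition on the Hessian $S''(0,Q)$ of the action at the ground state, and a scalar convexity (slope) condition on $\omega\mapsto M(0,Q_\omega)$. I would first record that, under $\omega+1=\mu+3\sigma\omega=:c$, Proposition \ref{carground} identifies the ground state as $(0,Q)$ with $Q$ the scalar ground state of \eqref{nlsg}, so that the symmetry generators expected to span the kernel of $S''(0,Q)$ are the phase direction $i(0,3Q)$ and, in $1$D, the single translation direction $(0,Q')$.

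The heart of the argument is to show that $S''(0,Q)$ has exactly one negative eigenvalue and kernel equal to the span of those two generators. Because $P\equiv 0$, the second variation block-diagonalizes into a $u$-block and a $w$-block. The $w$-block is precisely the scalar NLS Hessian at $Q$, governed by $L_+=-\Delta+c-27Q^2$ and $L_-=-\Delta+c-9Q^2$; classically $L_-\ge 0$ with kernel $\mathrm{span}\{Q\}$ (the phase direction) and $L_+$ has exactly one negative eigenvalue with kernel $\mathrm{span}\{Q'\}$ (the translation direction). The new point is the $u$-block, which I would compute to be the operator $\mathcal L_u=-\Delta+(1+\omega)-2Q^2$: the couplings $\tfrac13\overline u^2 w$ and $\tfrac19|u|^2u$ are of higher order in $u$ and contribute nothing to the quadratic form. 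Here the hypothesis $1+\omega=c$ is essential, since it lets me write $\mathcal L_u=L_-+7Q^2\ge L_-\ge 0$; as $Q>0$ everywhere, $\langle \mathcal L_u h,h\rangle\ge 7\int Q^2|h|^2>0$ for every $h\neq 0$, while the bottom of the essential spectrum is $c>0$. Thus $\mathcal L_u$ is strictly positive definite and contributes neither a negative direction nor a kernel element, so $S''(0,Q)$ inherits exactly one negative eigenvalue and the kernel $\mathrm{span}\{(0,Q'),(0,iQ)\}$, as required.

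It then remains to verify the slope condition. Along the family $\phi_\omega=(0,Q_\omega)$, where $Q_\omega$ solves \eqref{nlsg} with parameter $\mu+3\sigma\omega$, the identity $S_\omega'(\phi_\omega)=0$ together with the envelope theorem gives $\tfrac{d}{d\omega}S_\omega(\phi_\omega)=\tfrac12 M(\phi_\omega)$, so the condition needed is $\tfrac{d}{d\omega}M(0,Q_\omega)>0$. Using the explicit profile \eqref{explsol} I would compute $M(0,Q_\omega)=3\sigma\int Q_\omega^2=\tfrac{4\sigma}{3}(\mu+3\sigma\omega)^{1/2}$, which is strictly increasing in $\omega$; this is exactly the Vakhitov--Kolokolov condition for the $1$D cubic scalar equation. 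With both the spectral condition and the slope condition established, the GSS theorem yields orbital stability of $(0,e^{3i\omega t}Q)$.

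The main obstacle, and the only place the hypotheses $n=1$ and $\omega+1=\mu+3\sigma\omega$ are genuinely used, is the positivity of the $u$-block $\mathcal L_u$: without the resonance relation the constant $1+\omega$ no longer matches the scalar operator $L_-$, the comparison $\mathcal L_u\ge L_-$ breaks down, and a negative direction appears in the $u$-component, which is precisely the mechanism behind Theorem \ref{instateo}. A secondary technical point is to confirm that the standing wave carries zero momentum, so that the translation symmetry decouples from the phase symmetry and the multi-symmetry GSS matrix condition collapses to the single scalar inequality above; this holds because $Q$ is a real, even profile. Alternatively, one could run a variational (Cazenave--Lions) argument, characterizing $(0,Q)$ as the minimizer of $E$ under the constraint $M=M(0,Q)$ and combining compactness of minimizing sequences with conservation of $E$ and $M$; in that formulation the positivity of $\mathcal L_u$ reappears as the coercivity that forces minimizing sequences to concentrate entirely in the $w$-component.
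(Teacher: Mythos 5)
Your proposal is correct and follows essentially the same route as the paper: verification of the Grillakis--Shatah--Strauss hypotheses via block-diagonalization of $S''(0,Q)$ into the scalar NLS operators $L_+=-\Delta+c-27Q^2$ and $L_-=-\Delta+c-9Q^2$ together with the $u$-block $-\Delta+(1+\omega)-2Q^2=L_-+7Q^2>0$ (this is exactly the paper's $\mathcal{L}_3$, whose positivity the paper likewise deduces from $L_-\geq 0$ under the resonance relation), followed by the Vakhitov--Kolokolov condition $d''(\omega)>0$ with $d'(\omega)=\tfrac12 M(0,Q_\omega)$. The only cosmetic difference is that you compute $M(0,Q_\omega)=\tfrac{4\sigma}{3}(\mu+3\sigma\omega)^{1/2}$ from the explicit $\mathrm{sech}$ profile \eqref{explsol}, whereas the paper reaches the same sign conclusion through the scaling $Q_\omega(x)=(\omega+1)^{1/2}Q_0\bigl((\omega+1)^{1/2}x\bigr)$.
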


Here, if necessary, we will use $Q_\omega$ instead of $Q$ to emphasize that $Q$ depends on $\omega$. In addition, throughout the section, we assume $\omega+1=\mu+3\sigma\omega$.
In order to prove Theorem \ref{teoesta} we will use the well-known Grillakis, Shatah and Strauss' theory \cite{Grillakis4}. To simplify the notations, let $\Phi=(0, Q,0,0)$ and
\begin{equation} \label{4.2}
\mathcal{L_{\omega}}=S''(\Phi)= \left(
\begin{array}{cc}
\mathcal{L}_{R} & 0     \\
0 & \mathcal{L}_{I}
\end{array}
\right),
\end{equation}
where $\mathcal{L}_{R}$ and $\mathcal{L}_{I}$ are the $2\times 2$
matrix diagonal operators defined by
\begin{equation} \label{4.3}
\mathcal{L}_{R} = \left(
\begin{array}{cc}
-\Delta+(\omega+1)-2Q^2 & 0     \\
0 & -\Delta+(\alpha+3\sigma \omega)-27Q^2
\end{array}
\right)
\end{equation}
and
\begin{equation}  \label{4.4}
\mathcal{L}_{I} = \left(
\begin{array}{cc}
-\Delta+(\omega+1)-2Q^2 & 0     \\
0 & -\Delta+(\alpha+3\sigma \omega)-9Q^2
\end{array}
\right).
\end{equation}

In order to describe the spectrum of $\mathcal{L}_\omega$, we first
study the spectral properties of the following 
operators:
\begin{equation}  \label{4.5}
\mathcal{L}_{1} = -\Delta+(\alpha+3\sigma \omega)-27Q^2, \qquad \mathcal{L}_{2} =
-\Delta+(\alpha+3\sigma \omega)-9Q^2
\end{equation}
and
\begin{equation}  \label{4.6}
\mathcal{L}_{3} = -\Delta+(\omega+1)-2Q^2
\end{equation}

More precisely, we have:

\begin{theorem} \label{theorem4.1}
	Let $(0,Q)$ be as in Proposition \ref{carground}. Then:
	\begin{itemize}
		\item [(i)] The operator $\mathcal{L}_{1}$ in $(\ref{4.5})$ defined
		in $L^2(\R^n)$ has only one negative eigenvalue. Its kernel is given by
		$Ker(\mathcal{L}_1)=span\{Q_{x_i}; \;i=1,\ldots,n\}$ and the remainder
		of the spectrum is bounded away from zero.
		\item [(ii)] The operator $\mathcal{L}_{2}$ in $(\ref{4.5})$
		defined in $L^2(\R^n)$
		has no negative eigenvalues. Zero is a simple eigenvalue 
		with associated eigenfunction $Q$. Moreover, the remainder
		of the spectrum is bounded away from zero.
		\item [(iii)]The
		operator $\mathcal{L}_{3}$ in $(\ref{4.6})$ defined in $L^2(\R^n)$
		is a positive operator. Moreover, the remainder
		of the spectrum is bounded away from zero.
	\end{itemize}
\end{theorem}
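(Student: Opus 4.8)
The plan is to exploit the fact that, under the hypothesis $\omega+1=\mu+3\sigma\omega$, all three operators share the same zeroth-order coefficient. Writing $c$ for this common value (so $c=\omega+1=\mu+3\sigma\omega>0$) and recalling from Proposition \ref{carground} that the surviving component $Q$ is the positive, radially symmetric ground state of the scalar equation \eqref{nlsg}, we have
$$
-\Delta Q+cQ=9Q^3 .
$$
Thus $\mathcal{L}_1$, $\mathcal{L}_2$, $\mathcal{L}_3$ are precisely the classical linearized Schr\"odinger operators about $Q$, with potentials $-27Q^2$, $-9Q^2$ and $-2Q^2$. First I would record that, since $Q^2\to0$ at infinity, multiplication by $Q^2$ is a relatively compact perturbation of $-\Delta+c$; hence by Weyl's theorem the essential spectrum of each operator is $[c,\infty)$ with $c>0$. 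This reduces every assertion to locating the finitely many eigenvalues below $c$, and in particular it is what turns the statements into ``the remainder of the spectrum is bounded away from zero'' once the discrete spectrum is understood.

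For (ii) the key observation is $\mathcal{L}_2Q=0$, which is exactly the equation satisfied by $Q$; so $0$ is an eigenvalue with eigenfunction $Q$. Because $Q>0$ does not change sign, it must be the eigenfunction associated with the lowest eigenvalue of the Schr\"odinger operator $\mathcal{L}_2$ (the ground state of such an operator is the unique, up to scalar multiple, nonnegative eigenfunction). Hence $0=\inf\operatorname{spec}(\mathcal{L}_2)$, it is a simple eigenvalue, and $\mathcal{L}_2$ has no negative spectrum, which is (ii).

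For (i), differentiating $-\Delta Q+cQ=9Q^3$ in $x_i$ gives $\mathcal{L}_1Q_{x_i}=0$, so every $Q_{x_i}$ lies in $\ker(\mathcal{L}_1)$. Since $Q_{x_i}$ changes sign it cannot be a ground state of $\mathcal{L}_1$, so $\mathcal{L}_1$ has at least one eigenvalue below $0$. To obtain the sharp count --- exactly one negative eigenvalue and $\ker(\mathcal{L}_1)=\operatorname{span}\{Q_{x_i}\}$ with nothing else --- I would invoke the nondegeneracy of the scalar ground state. In the one-dimensional setting relevant to Theorem \ref{teoesta}, $Q$ is the explicit $\operatorname{sech}$ profile \eqref{explsol}, so $\mathcal{L}_1$ is an explicitly solvable P\"oschl--Teller operator whose spectrum can be exhibited directly; alternatively, by Sturm oscillation theory the kernel element $Q'$ has exactly one zero, hence is the second eigenfunction, which forces precisely one simple negative eigenvalue below it and strict positivity of the remaining spectrum. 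This yields (i).

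Finally, for (iii) I would write $\mathcal{L}_3=\mathcal{L}_2+7Q^2$ and use, together with $\mathcal{L}_2\ge0$ from (ii), the quadratic-form bound
$$
\langle\mathcal{L}_3 f,f\rangle=\langle\mathcal{L}_2 f,f\rangle+7\int Q^2 f^2\ \ge\ 7\int Q^2 f^2\ \ge\ 0,
$$
with equality forcing $\int Q^2 f^2=0$; since $Q>0$ everywhere this forces $f\equiv0$. Hence the form is positive definite, $0$ is not an eigenvalue, and combined with the essential spectrum at $c>0$ (and the fact that only finitely many eigenvalues lie below $c$) the spectrum of $\mathcal{L}_3$ is bounded away from zero, giving (iii). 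The only genuinely non-elementary ingredient is the nondegeneracy and Morse-index statement for $\mathcal{L}_1$ in (i): in dimension one it is classical and explicit, while for general $n$ it is the content of the Weinstein--Kwong uniqueness and nondegeneracy theory for \eqref{nlsg}, which I would cite rather than reprove.
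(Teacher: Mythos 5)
Your proposal is correct and takes essentially the same route as the paper: the paper's proof consists of citing Weinstein's classical spectral theory \cite{w1}, \cite{w2} for (i) and (ii) (your observation that $R=3Q$ solves the standard cubic equation, so $\mathcal{L}_1,\mathcal{L}_2$ are the usual linearized operators $L_\pm$, is exactly what makes those citations apply) and noting that (iii) is a consequence of (ii), precisely as in your decomposition $\mathcal{L}_3=\mathcal{L}_2+7Q^2$. The details you fill in --- Weyl's theorem for the essential spectrum, simplicity of the lowest eigenvalue via positivity of $Q$, Sturm theory in dimension one and the Kwong--Weinstein nondegeneracy in general dimension --- are the standard content behind the paper's references, so nothing further is needed.
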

\begin{proof}
These are well-known results, see for instance \cite{w1} and \cite{w2}. Note that (iii) is a consequence of (ii).
\end{proof}

As an immediate consequence, we have.

\begin{corollary}    \label{theorem4.2}
		Let $(0,Q)$ be as in Proposition \ref{carground}. Then the operator $\mathcal{L_{\omega}}$ has exactly one negative eigenvalue, 	$Ker(\mathcal{L_{\omega}})$ is $(n+1)$-dimensional and spanned by the set	$\{(0,0,0,Q),(0,Q_{x_i},0,0); \;i=1,\ldots,n\}$.  Moreover, the remainder
		of the spectrum is bounded away from zero.
\end{corollary}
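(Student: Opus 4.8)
The plan is to exploit the block structure already displayed in \eqref{4.2}--\eqref{4.4}. Since both $\mathcal{L}_R$ and $\mathcal{L}_I$ are \emph{diagonal} $2\times2$ matrix operators, $\mathcal{L}_\omega$ is, after the obvious identification of components, nothing but an orthogonal direct sum of four scalar Schr\"odinger operators. Writing a generic element of the underlying (real) Hilbert space as $(u_1,w_1,u_2,w_2)$, i.e. the real and imaginary parts of $u$ and $w$, the operator $\mathcal{L}_\omega$ acts as $\mathcal{L}_3$ on the $u_1$-slot, as $\mathcal{L}_1$ on the $w_1$-slot, as $\mathcal{L}_3$ again on the $u_2$-slot, and as $\mathcal{L}_2$ on the $w_2$-slot, where $\mathcal{L}_1,\mathcal{L}_2,\mathcal{L}_3$ are the operators in \eqref{4.5}--\eqref{4.6}. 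Thus $\mathcal{L}_\omega$ is unitarily equivalent to $\mathcal{L}_3\oplus\mathcal{L}_1\oplus\mathcal{L}_3\oplus\mathcal{L}_2$, and the corollary is just a matter of reading off the spectral data of the summands.

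For an orthogonal direct sum of self-adjoint operators the spectrum is the union of the individual spectra, the number of negative eigenvalues (counted with multiplicity) is the sum of the individual counts, and the kernel is the direct sum of the individual kernels. I would therefore simply invoke Theorem \ref{theorem4.1}. By part (iii) the two copies of $\mathcal{L}_3$ are positive and contribute neither negative eigenvalues nor kernel; by part (ii) the operator $\mathcal{L}_2$ contributes no negative eigenvalue and a one-dimensional kernel spanned by $Q$; and by part (i) the operator $\mathcal{L}_1$ contributes exactly one negative eigenvalue together with the $n$-dimensional kernel $\mathrm{span}\{Q_{x_i}:i=1,\ldots,n\}$.

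Adding these contributions shows at once that $\mathcal{L}_\omega$ has exactly one negative eigenvalue and that its kernel has dimension $n+1$. Tracking in which slot each kernel element lives, the eigenfunction $Q$ of $\mathcal{L}_2$ occupies the $w_2$-component and corresponds to $(0,0,0,Q)$, whereas the functions $Q_{x_i}\in\mathrm{Ker}(\mathcal{L}_1)$ occupy the $w_1$-component and correspond to $(0,Q_{x_i},0,0)$; hence $\mathrm{Ker}(\mathcal{L}_\omega)=\mathrm{span}\{(0,0,0,Q),(0,Q_{x_i},0,0):i=1,\ldots,n\}$, exactly as claimed. Finally, since each scalar summand has the remainder of its spectrum bounded away from zero and we are dealing with a finite direct sum, the remainder of the spectrum of $\mathcal{L}_\omega$ is likewise bounded away from zero.

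There is no genuine obstacle here: all the analytic content (the sign of the lowest eigenvalue, the simplicity of the zero eigenvalue, and the precise description of the kernels) is already contained in Theorem \ref{theorem4.1}, so the corollary reduces to correct bookkeeping. The one point deserving a line of care is the embedding of the kernels into the full four-component space, namely remembering that the eigenfunctions coming from $\mathcal{L}_1$ and $\mathcal{L}_2$ sit in different slots; their images in $\mathcal{L}_\omega$ are therefore mutually orthogonal and genuinely span an $(n+1)$-dimensional space, with no accidental coincidence or cancellation.
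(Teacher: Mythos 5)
Your proposal is correct and takes essentially the same route as the paper, which states the corollary as an immediate consequence of Theorem \ref{theorem4.1}: since $\mathcal{L}_\omega$ is the orthogonal direct sum $\mathcal{L}_3\oplus\mathcal{L}_1\oplus\mathcal{L}_3\oplus\mathcal{L}_2$ acting on the slots $(u_1,w_1,u_2,w_2)$, the negative-eigenvalue count, the kernel, and the spectral gap are obtained exactly by the summand-by-summand bookkeeping you carry out. Your placement of the kernel elements, $Q$ in the $w_2$-slot giving $(0,0,0,Q)$ and $Q_{x_i}$ in the $w_1$-slot giving $(0,Q_{x_i},0,0)$, matches the paper's statement precisely.
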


Now we proof Theorem \ref{teoesta}.

\begin{proof}[Proof of Theorem \ref{teoesta}]
In view of Corollary \ref{theorem4.2} and the theory in \cite{Grillakis4} it suffices to prove that the second derivative of the function $d(\omega)=S(0,Q_\omega)$ is positive. But since $(0,Q_\omega)$ is a critical point of $S$ we have
$$
d'(\omega)=\frac{1}{2}M(0,Q_\omega)=\frac{3\sigma}{2}\int Q_\omega^2.
$$
Note that if $Q_0$ is the ground state of the equation
\begin{equation}\label{Qeq}
-\Delta Q+(\omega+1)Q-9Q^3=0,
\end{equation}
with $\omega=0$, then (by uniqueness)
$$
Q_\omega(x)=(\omega+1)^{1/2}Q_0\left((\omega+1)^{1/2}x\right)
$$
is the ground state of \eqref{Qeq} with $\omega>-1$. Thus,
$$
\int Q_\omega^2=\frac{1}{(\omega+1)^{n/2-1}}\int Q_0^2
$$
and
$$
d''(\omega)=\left(\frac{n}{2}-1\right)\frac{3\sigma}{2(\omega+1)^{n/2}}\int Q_0^2,
$$
from which we deduce $d''(\omega)>0$ for $n=1$. This completes the proof of the theorem.
\end{proof}

\section{Acknowledgment}
This work was developed in the frame of the CAPES-FCT convenium
Equa\c{c}\~oes de evolu\c{c}\~ao dispersivas. Ademir Pastor would like to thank the kind hospitality of Instituto Superior T\'ecnico, Universidade de Lisboa.
Filipe Oliveira was partially supported by the Project CEMAPRE - UID/
MULTI/00491/2013 financed by FCT/MCTES through national funds. Ademir Pastor was partially supported by CNPq/Brazil grants 402849/2016-7 and 303098/2016-3.

\section{Conflict of interest}
On behalf of all authors, the corresponding author states that there is no conflict of interest.

\small\noindent\textsc{Filipe Oliveira}\\
Mathematics Department and CEMAPRE\\
\noindent	
ISEG, Universidade de Lisboa\\
Rua do Quelhas 6, 1200-781 Lisboa, Portugal\\
\verb"foliveira@iseg.ulisboa.pt"

\bigskip

\noindent
\small\noindent\textsc{Ademir Pastor}\\
IMECC-UNICAMP,\\
Rua S\'ergio Buarque de Holanda, 651,\\
Cidade Universit\'aria, Campinas\\
SP 13083-859, Brazil\\
\verb"apastor@ime.unicamp.br"\\
\end{document}